\newtheorem{thm}{Theorem}[section]
\newtheorem{lem}[thm]{Lemma}
\newtheorem{prop}[thm]{Proposition}
\newtheorem{defn}[thm]{Definition}
\newtheorem{question}{Question}
\newtheorem{obs}[thm]{Observation}
\theoremstyle{remark}
\numberwithin{equation}{section}
\newcommand{\dom}{{\text{dom}}}
\renewcommand{\cor}{{\mathrm{c}}}
\newcommand{\mf}{{\mathrm{mf}}}
\newcommand{\cf}{{\mathrm{cf}}}
\newcommand{\gen}{{\mathrm{g}}}
\newcommand{\ngen}{{\mathrm{ng}}}
\begin{document}
\title[Density-1-bounding and Quasiminimality]{Density-1-bounding and
  quasiminimality in the generic degrees} \author{Peter Cholak,
  Gregory Igusa}

\thanks{This
  work was partially supported by a grant from the Simons Foundation
  (\#315283 to Peter Cholak).  Igusa was partially supported by
  EMSW21-RTG-0838506.}

\maketitle

\begin{abstract}

  We consider the question ``Is every nonzero generic degree a
  density-1-bounding generic degree?'' By previous results \cite{I2}
  either resolution of this question would answer an open question
  concerning the structure of the generic degrees: A positive result
  would prove that there are no minimal generic degrees, and a
  negative result would prove that there exist minimal pairs in the
  generic degrees.

  We consider several techniques for showing that the answer might be
  positive, and use those techniques to prove that a wide class of
  assumptions is sufficient to prove density-1-bounding.

  We also consider a historic difficulty in constructing a potential
  counterexample: By previous results \cite{I1} any generic degree
  that is not density-1-bounding must be quasiminimal, so in
  particular, any construction of a non-density-1-bounding generic
  degree must use a method that is able to construct a quasiminimal
  generic degree. However, all previously known examples of
  quasiminimal sets are also density-1, and so trivially
  density-1-bounding. We provide several examples of non-density-1
  sets that are quasiminimal.

  Using cofinite and mod-finite reducibility, we extend our results to
  the uniform coarse degrees, and to the nonuniform generic degrees.
  We define all of the above terms, and we provide independent
  motivation for the study of each of them.

  Combined with a concurrently written paper of Hirschfeldt, Jockusch,
  Kuyper, and Schupp \cite{HJKS}, this paper provides a
  characterization of the level of randomness required to ensure
  quasiminimality in the uniform and nonuniform coarse and generic
  degrees.

\end{abstract}

\section{Introduction}

Generic computability was introduced by Kapovich, Miasnikov, Schupp
and Shpilrain \cite{KMSS} as a computability-theoretic analogue of the
real-world phenomenon in which a problem is apparently much easier to
solve than would be suggested by complexity theory. The idea of
generic-case complexity is to measure the complexity of the majority
of instances of a problem, while disregarding ``difficult'' instances
if they are sufficiently uncommon.  Generic computability as well as coarse
computability, a similarly defined notion, were later studied by Jockusch and Schupp \cite{JS} in
the framework of the theory of computability theory.


In generic and coarse computability, we think of a real $A$ that we
are trying to compute as the problem, and the bits of $A$ as the
instances of the problem. The goal, then, is to compute the majority
of the bits of $A$. In generic computability, we are not allowed to
make any mistakes, but we are allowed to not always give answers. In
coarse computability, we must give answers everywhere, but we are
allowed to make some mistakes. Coarse computability can be thought of
as the analogue of algorithms that take shortcuts, sacrificing
accuracy for speed, while generic computability is the analogue of
completely accurate algorithms that run very quickly in most cases,
but more slowly or perhaps not at all in others. Following the
notation of Jockusch and Schupp \cite{JS}, we make this precise as
follows.

\begin{defn}\label{real}
  \rm Let $A\subseteq\omega$. Then $A$ is \emph{density-1} if the
  limit of the densities of its initial segments is 1, or in other
  words, if $\lim_{n\rightarrow\infty}\frac{|{A\cap n}|}{n}=1$.
\end{defn}

In this paper a real $A$ is thought of either as a subset of $\omega$,
or as a function $A:\omega\rightarrow \{0,1\}$. In situations where it
will not cause confusion, the two notations will be used
interchangably, so $n\in A$ means the same thing as $A(n)=1$, and
$n\notin A$ means the same thing as $A(n)=0$.

\begin{defn}
  \normalfont A \emph{partial computation of a real $A$} is a partial
  computation $\phi$ (potentially with an oracle) such that for any
  $n$, if $\phi(n)\downarrow$, then $\phi(n)=A(n)$.
\end{defn}

\begin{defn}
  \normalfont A real $A$ is \emph{generically computable} if there
  exists a partial computable function $\phi$ such that $\dom(\phi)$ is
  density-1, and $\phi$ is a partial computation of $A$.



\end{defn}

\begin{defn}
  \rm A real $A$ is \emph{coarsely computable} if there exists a total
  computable function $\phi$, whose range is contained in
  $\lbrace 0,1\rbrace$ such that $\lbrace n\, |\, \phi(n)=A(n)\rbrace$
  is density-1.

\end{defn}

In order to obtain degree structures for these, we need to make sure
that our notion of relative computability is transitive, or in other
words, that if $X\leq Y\leq Z$, then $X\leq Z$. The outputs of our
``computations'' are generic and coarse descriptions of our reals, and
so the inputs of our computations should also be generic and coarse
descriptions. This is fairly straightforward to define for coarse
reducibility.

\begin{defn}
  \rm Let $A$ and $B$ be reals. Then \emph{$B$ is (uniformly) coarsely
    reducible to $A$} if there exists a Turing functional $\phi$ such
  that for any $C$ for which $\{n:A(n)=C(n)\}$ is density-1, $\phi^C$
  is a coarse computation of $B$. In this case, we write
  $B\leq_{\cor}A$.
\end{defn}

In nonuniform coarse reducibility, the functional $\phi$ is allowed to
depend on $C$. There is no major reason to prefer the uniform or
nonuniform reducibilities, although it appears that for coarse
reducibility, the nonuniform version is slightly easier to work with,
while for generic reducibility, the uniform version is slightly easier
to work with. This paper is primarily focused on generic reducibility,
but will occasionally derive conclusions about (uniform) coarse
reducibility from arguments concerning (uniform) generic reducibility,
and so in this paper, unless specifically specified otherwise, all
reducibilities are assumed to be uniform.

Generic reducibility is somewhat more difficult to define than coarse
reducibility, because to define generic reducibility, we are forced to
discuss what it means to use a partial computation as an oracle in a
computation. Our generic computations are not even required to tell us
whether or not they will give an output on a given value, and so our
generic computations must be able to work with oracles that also do
not tell them which outputs they will give. For this reason, we first
define partial oracles, and discuss what it means for a Turing
functional to work with a partial oracle.

\section{Partial Oracles}

We wish to define computations with partial oracles so that the
following happens: We must be able to ask questions of our oracles and
make decisions based on the outputs. Second, we must be able to avoid
being paralyzed when an oracle does not give outputs, and we must also
not be able to know whether or not the oracle will give an output in
the future, if it has not yet given an output. We may formalize this
either with time delays built in to our oracles, or with enumeration
operators, which are designed to be blind to exactly the sorts of
information that we do not wish to be able to use.

For uniform reducibilities, these are equivalent in terms of what is
reducible to what, although not necessarily in terms of the actual
procedures \cite{I1}. For nonuniform reducibilities, it is not known
whether or not the two ways to approach partial oracles are
equivalent. In this paper, we present the reducibility in terms of
time-dependent oracles.

\begin{defn}

  \normalfont

  Let $A$ be a real. Then a (time-dependent) \emph{partial oracle,}
  $(A)$, for $A$ is a set of ordered triples $\langle n,x,l\rangle$
  such that:

  $n\in\omega, \ \ x\in 2, \ \ l\in\omega$,

  $\exists l\big(\langle n,0,l\rangle\in (A)\big)\Longrightarrow
  n\notin A$,

  $\exists l\big(\langle n,1,l\rangle\in (A)\big)\Longrightarrow n\in
  A$,

  For every $n,x$, there exists at most one $l$ such that
  $\langle n,x,l\rangle\in (A)$.

\end{defn}

When using such an oracle, querying whether or not $n\in A$ consists
of initiating a search for some value of $l$ such that either
$\langle n,0,l\rangle\in (A)$, or $\langle n,1,l\rangle\in (A)$. If
there exists such an $l$, we say that $(A)(n)$ converges, or that
$(A)(n)\downarrow$. The domain of $(A)$, written $\dom((A))$, is the
set of $n$ for which there exists such an $l$. If
$\langle n,x,l\rangle\in (A)$, then we write that $(A)(n)=x$. Other
computations, processes, and queries may be carried out while
searching for such an $l$. When working with reducibilities that use
partial oracles, we will frequently abuse notation and refer to $A$ as
the partial oracle for $A$ that converges immediately on all inputs.

In previous work \cite{DI,I1,I2}, the second author omitted the
uniqueness requirement on $l$. Including this convention does not
change any reducibilities that we will define, and it will be
convenient to us in Section 6, as it will ensure that for any $n$,
$(A)\upharpoonright n$ must always be computable.

\begin{defn}
  \normalfont

  Let $A$ be a real. Then a \emph{generic oracle,} $(A)$, for $A$ is a
  partial oracle for $A$ such that $\dom((A))$ is density-1.
\end{defn}

Note that a generic computation of $A$ is the same thing as a
computation of a generic oracle for $A$.

\begin{defn}
  \normalfont Let $A$ and $B$ be reals. Then \emph{$B$ is (uniformly)
    generically reducible to $A$} if there exists a Turing functional
  $\phi$ such that for every generic oracle, $(A)$, for $A$,
  $\phi^{(A)}$ is a generic computation of $B$. In this case, we write
  $B\leq_{\gen}A$.
\end{defn}

When working with partial oracles in a context where (1) mistakes are
not allowed to be made, and (2) one must act uniformly, it can be
shown that no advantage can be gained by actively using the time
dependence of the partial oracles (see Observation
\ref{canusetimeindependent}). It is frequently much more convenient to
work only with Turing functionals which ignore the time dependence of
their partial oracles, and in this paper, we will later be assuming
that all Turing functionals are of that form.

Those familiar with enumeration reduction will see that what we define
below is an enumeration operator on the graph of the partial function
given by a partial oracle.

\begin{defn}\label{D:timeindependent}
  \normalfont Let $\phi$ be a Turing functional. Then the
  \emph{time-independent version of $\phi$} is the (potentially
  multi-valued) functional $\psi$ such that if $X$ is any partial
  oracle, then $\psi^{X}(n)$ is defined by considering all partial
  oracles $Y$ whose domains are finite subsets of $\dom(X)$, and which
  agree with $X$ on their domains, and giving every output that
  $\phi^{Y}(n)$ would give on any of those partial oracles $Y$.

  We refer to a $\psi$ defined in this way as a \emph{time-independent
    Turing functional.}
\end{defn}

Note that this process can be carried out effectively. There are
countably many finite partial oracles agreeing with $X$, and they can
be enumerated effectively in $X$. The computations of $\phi^Y(n)$ can
be carried out in parallel. Thus if $\psi$ is a time-independent
Turing functional, then for a partial oracle $X$, the outputs of
$\psi^X$ are $\Sigma^{0,X}_1$ in much the same way that the outputs of
$\phi^X$ are $\Sigma^{0,X}_1$ for an ordinary Turing functional $\phi$
and oracle and $X$.

In the remark after Theorem \ref{embeddings} we will see why
multi-valued functionals are necessary for our purposes.

We now prove some basic facts concerning partial oracles and
time-independent functionals. Our first observation is a justification
of why these are referred to as time-independent.

\begin{obs}
  Assume $\phi$ is a time-independent functional.  Let $X$ and $Y$ be
  partial oracles that have the same domains and agree on their
  domains. (So they agree as partial functions, although perhaps with
  different $l$ values at the locations where they converge.) Then
  $\phi^X=\phi^Y$ as a (potentially partial, potentially mutli-valued)
  function.
\end{obs}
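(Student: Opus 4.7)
My plan is to prove this by unwinding Definition \ref{D:timeindependent} directly and observing that the construction of $\phi^X$ makes no reference to the specific $l$-values of $X$.

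Fix an input $n$, and let $\phi_0$ denote the underlying Turing functional of which $\phi$ is the time-independent version. By Definition \ref{D:timeindependent}, the possible outputs of $\phi^X(n)$ are exactly those $m$ such that there exists a finite partial oracle $Z$ with $\dom(Z)\subseteq \dom(X)$, with $Z$ agreeing with $X$ on $\dom(Z)$ as a partial function, and with $\phi_0^Z(n)=m$. Let $\mathcal{F}_X$ denote this family of admissible finite sub-oracles. The constraints defining membership in $\mathcal{F}_X$ refer only to (i) $\dom(X)$ and (ii) the values that $X$ takes on that domain viewed as a partial function. The $l$-values of $X$ never enter the definition, and the $l$-values appearing in the various $Z\in \mathcal{F}_X$ are free to range over all valid choices.

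Since $X$ and $Y$ have the same domain and the same partial-function values there, $\mathcal{F}_X=\mathcal{F}_Y$. Therefore
\[
\phi^X(n)=\bigcup_{Z\in\mathcal{F}_X}\phi_0^Z(n)=\bigcup_{Z\in\mathcal{F}_Y}\phi_0^Z(n)=\phi^Y(n),
\]
and since this holds at every $n$, we obtain $\phi^X=\phi^Y$ as (potentially partial, potentially multi-valued) functions.

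There is no real obstacle here; the observation is essentially immediate from the definition and is what justifies the name \emph{time-independent}. The only delicate point is the interpretation of the phrase ``agree with $X$ on their domains'' in Definition \ref{D:timeindependent}: it must mean agreement of partial-function values and not agreement of $l$-values, which is both the reading forced by the parenthetical in the observation and the only one consistent with $\phi$'s independence of time-stamp data.
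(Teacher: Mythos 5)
Your proof is correct and takes essentially the same approach as the paper; the paper's own proof is a single sentence asserting that the definition ensures $\phi^X$ depends only on $X$ as a partial function, and your argument simply unwinds that claim explicitly. Your parenthetical about the interpretation of ``agree with $X$ on their domains'' is a reasonable clarification, and it is indeed the reading the paper intends.
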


Note that this justifies the abuse of notation in which $A$ is the
partial oracle that halts everywhere. Every oracle for $A$ that halts
everywhere gives the same outputs when given to a time-independent
functional, and so it does not matter which specific one we use.

\begin{proof}
  The definition of a time-independent functional specifically ensures
  that $\phi^X$ depends only on $X$ as a partial function.
\end{proof}





This next observation is the primary reason that we will use
time-independent functionals. One important use of the following
observation is that if we are checking whether $A\leq_{\gen}B$ via
$\phi$, if we check that $\phi^B$ never makes any mistakes about $A$,
then it also ensures that for every partial oracle $(B)$ for $B$,
$\phi^{(B)}$ also never makes any mistakes about $A$.

\begin{obs}\label{moreismore}
  Assume $\phi$ is a time-independent functional.  Let $X$ and $Y$ be
  partial oracles such that $\dom(Y)\subseteq\dom(X)$, and such that
  $Y$ agrees with $X$ on its domain. Then if $\phi^Y(n)=x$, then
  $\phi^X(n)=x$.
\end{obs}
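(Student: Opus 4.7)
The plan is to unfold Definition \ref{D:timeindependent} and observe that the witnessing finite partial oracle for $\phi^Y(n)=x$ also witnesses $\phi^X(n)=x$, by the transitivity of the ``subset of domain / agrees on domain'' relation.

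More concretely, I would first note that by Definition \ref{D:timeindependent} applied to $Y$, the assumption $\phi^Y(n)=x$ means there is some finite partial oracle $Z$ with $\dom(Z)\subseteq\dom(Y)$, agreeing with $Y$ on $\dom(Z)$, such that the underlying (non-time-independent) functional satisfies $\phi^Z(n)=x$. Next I would check that $Z$ is also a legitimate witness for $X$: since $\dom(Z)\subseteq\dom(Y)\subseteq\dom(X)$, the domain condition is satisfied; and since $Y$ agrees with $X$ on $\dom(Y)\supseteq\dom(Z)$, and $Z$ agrees with $Y$ on $\dom(Z)$, transitivity of equality gives that $Z$ agrees with $X$ on $\dom(Z)$. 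Hence $Z$ is one of the finite partial oracles considered in the definition of $\phi^X(n)$, so $x$ is among the outputs of $\phi^X(n)$, i.e.\ $\phi^X(n)=x$ (as a multi-valued function).

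There is really no obstacle here: the observation is essentially just ``monotonicity'' built into the definition, analogous to the use property of enumeration operators, which the paper has already flagged by pointing out that time-independent functionals are enumeration operators on the graph of the partial function. The only minor care needed is to remember that $\phi^X$ is potentially multi-valued, so the conclusion ``$\phi^X(n)=x$'' should be read as ``$x$ is one of the values taken by $\phi^X(n)$'', exactly as in the statement. Once this is noted, the proof is a one-line unfolding of definitions.
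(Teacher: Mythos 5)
Your proof is correct and takes essentially the same approach as the paper: unfold the definition of time-independent functional, take the finite witnessing partial oracle $Z$ for $\phi^Y(n)=x$, and observe that $Z$ also qualifies as a witness relative to $X$.
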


Note that time-independent functionals are sometimes multivalued, and
so it is possible that $\phi^X(n)$ has more than one value and that
$\phi^Y$ does not. This observation simply notes that a ``larger''
oracle must cause the functional to give more outputs.

\begin{proof}
  Assume that $\phi$ is the time-independent version of
  $\theta$. Assume that $\phi^Y(n)=x$. This is because, at some finite
  stage, we see a finite partial oracle $Z$ agreeing with $Y$ on its
  domain, such that $\theta^Z(n)=x$. We have that
  $\dom(Y)\subseteq\dom(X)$, and that $Y$ agrees with $X$ on its
  domain. Therefore, at some stage, we will see that $Z$ is a partial
  oracle that agrees with $X$ on its domain, and at that stage, we
  will begin computing $\theta^Z(n)$.
\end{proof}

The following two observations will justify us in only using
time-independent functionals, so that the aforementioned conveniences
will be available to us.

\begin{obs}\label{notwrong}
  Let $A$ and $B$ be reals. Assume that $B\leq_{\gen}A$ via
  $\phi$. Then for any $n,x$, and for any partial oracle $(A)$ for
  $A$, if $\phi^{(A)}(n)=x$, then $B(n)=x$.

  In particular, given any $X$ and $Y$, if $\phi^Y$ is multi-valued,
  then it cannot be the case that $X\leq_{\gen}Y$ via $\phi$.
\end{obs}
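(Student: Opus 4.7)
The plan is to show that any partial oracle $(A)$ producing output $x$ at input $n$ can be completed to a genuinely generic oracle $(A)^{*}$ for $A$ that still produces $x$ at $n$; once this is done, the definition of $B \leq_{\gen} A$ via $\phi$ immediately forces $B(n) = x$.

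Concretely, given the partial oracle $(A)$ with $\phi^{(A)}(n) = x$, I would define $(A)^{*}$ by keeping every triple already in $(A)$ and, for each $m \in \omega \setminus \dom((A))$, adjoining a triple $\langle m, A(m), l_m \rangle$, where the $l_m$ are chosen to be distinct from one another and from every $l$-value already used in $(A)$. This $(A)^{*}$ satisfies the definition of a partial oracle for $A$: every new triple has correct second coordinate $A(m)$, the uniqueness-of-$l$ requirement is preserved, and the defining implications hold. Since $\dom((A)^{*}) = \omega$, which is trivially density-$1$, $(A)^{*}$ is a generic oracle for $A$. Because $\dom((A)) \subseteq \dom((A)^{*})$ and the two oracles agree on $\dom((A))$, and because $\phi$ is time-independent (as assumed from this point on in the paper), Observation \ref{moreismore} yields $\phi^{(A)^{*}}(n) = x$. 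Since $B \leq_{\gen} A$ via $\phi$, $\phi^{(A)^{*}}$ is a generic computation of $B$, and generic computations never give wrong answers, so $B(n) = x$.

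For the ``in particular'' clause, suppose $\phi^Y$ is multi-valued, so that there exist $n$ and $x \neq x'$ with $\phi^Y(n) = x$ and $\phi^Y(n) = x'$. Viewing $Y$ as its own trivial partial oracle (the one that converges immediately on all inputs), and applying the first part to a hypothetical reduction $X \leq_{\gen} Y$ via $\phi$, we would obtain $X(n) = x$ and $X(n) = x'$, a contradiction. So no such $\phi$ can witness $X \leq_{\gen} Y$. There is no serious obstacle to any of this; the only subtle ingredient is the reliance on time-independence, without which adding convergences at inputs $m \neq n$ could in principle change the behavior of $\phi$ at input $n$ and the completion trick would break down.
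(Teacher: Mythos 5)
There is a genuine gap, and you in fact put your finger on it yourself in your last sentence but drew the wrong conclusion from it. Your argument hinges on $\phi$ being time-independent: you extend $(A)$ to a total oracle $(A)^{*}$ and invoke Observation~\ref{moreismore} (which explicitly assumes time-independence) to conclude $\phi^{(A)^{*}}(n)=x$. But Observation~\ref{notwrong} is stated for an arbitrary functional $\phi$ witnessing $B\leq_{\gen}A$, and it \emph{must} be proved at that level of generality, because Observation~\ref{canusetimeindependent} --- the result that licenses the ``all functionals are time-independent from here on'' convention --- is itself proved by applying Observation~\ref{notwrong} to the original, possibly time-dependent $\phi$. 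So assuming time-independence here is circular. Relatedly, your construction of $(A)^{*}$ picks the new $l_m$ to be ``distinct from every $l$-value already used in $(A)$,'' which is not the relevant constraint for a time-dependent $\phi$: what can change the computation is whether a newly added triple $\langle m, A(m), l_m\rangle$ has an $l_m$ that $\phi$ \emph{queried and found absent} during its run on $(A)$, and nothing in your choice rules that out.

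The paper's argument avoids both problems by working locally. A halting computation $\phi^{(A)}(n)=x$ examines only a finite part of $(A)$: it queries finitely many triples and checks finitely many $l$-values. One then extends \emph{that finite portion} to a generic (indeed total) oracle $(A)'$ for $A$ by adding correct outputs only at values not yet queried, or with $l$-values strictly larger than any already checked. This leaves every query made during the run of $\phi^{(A)}(n)$ with the same answer, so $\phi^{(A)'}(n)=x$ for \emph{any} $\phi$, time-dependent or not; since $(A)'$ is a generic oracle for $A$ and $B\leq_{\gen}A$ via $\phi$, we get $B(n)=x$. Your treatment of the ``in particular'' clause is fine given the first part, but the first part as you've argued it does not cover the case the observation actually needs to handle.
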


\noindent{\bf Remark.} The second part of the observation is
particularly useful in that it allows us to make conclusions about
whether or not $X\leq_{\gen}Y$ via $\phi$ without knowing $X$. This is
similar to the fact that in ordinary Turing computation, if $\phi^Y$
is not total, then (without knowing $X$, one knows that) it is not the
case that $X\leq_TY$ via $\phi$. It is also similar to the fact that
if one sees that for every $Z$, there is some $n$ such that
$\phi^Z(n)\neq X(n)$, then (without knowing $Y$, one knows that) it is
not the case that $X\leq_TY$ via $\phi$. These sorts of observations
are helpful in, for example, the classical construction of a real of
minimal Turing degree.

\begin{proof}
  By definition of generic reduction, we have that, for any generic
  oracle $(A)$ for $A$, if $\phi^{(A)}(n)=x$, then
  $B(n)=x$. Furthermore, given any partial oracle $(A)$ for $A$, any
  initial segment of $(A)$ can be extended to a generic oracle for $A$
  by adding more outputs at values not yet queried (or with $l$ values
  larger than have yet been checked). That generic oracle would not be
  able to have given any incorrect outputs about $B$, and so there can
  be no finite stage at which $\phi^{(A)}$ gives any incorrect outputs
  about $B$.

  It cannot be the case that $X(n)$ has more than one value, and so if
  $\phi^{(Y)}(n)$ has more than one value, then the conclusion of the
  first part of the observation does not hold.
\end{proof}

\begin{obs}\label{canusetimeindependent}
  Let $A$ and $B$ be reals. Assume that $B\leq_{\gen}A$ via
  $\phi$. Then $B\leq_{\gen}A$ via the time-independent version of
  $\phi$.
\end{obs}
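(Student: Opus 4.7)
The plan is to let $\psi$ denote the time-independent version of $\phi$ and verify, for an arbitrary generic oracle $(A)$ for $A$, that $\psi^{(A)}$ is a generic computation of $B$. This requires three things: (i) $\psi^{(A)}$ is single-valued wherever it converges, (ii) wherever it converges, it agrees with $B$, and (iii) its domain is density-1.

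First I would handle (i) and (ii) simultaneously by invoking Observation \ref{notwrong}, which already tells us that $\phi^Z$ never outputs an incorrect value about $B$ on \emph{any} partial oracle $Z$ for $A$, not merely on generic ones. Suppose $\psi^{(A)}(n)=x$. By Definition \ref{D:timeindependent}, this means there is some finite partial oracle $Y$ whose domain is a finite subset of $\dom((A))$, which agrees with $(A)$ on its domain, and such that $\phi^{Y}(n)=x$. But this $Y$ is itself a partial oracle for $A$, so Observation \ref{notwrong} forces $B(n)=x$. This proves (ii). For (i), if $\psi^{(A)}(n)$ had two distinct values, both would have to equal $B(n)$ by the argument just given, a contradiction.

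For (iii), I would observe that any convergent computation $\phi^{(A)}(n)=x$ uses only finitely many queries to $(A)$, so there is a finite sub-partial-oracle $Y\subseteq (A)$, agreeing with $(A)$ on its (finite) domain, such that $\phi^{Y}(n)=x$ is witnessed at the same stage. Plugging this $Y$ into the definition of $\psi$ gives $\psi^{(A)}(n)=x$ as well. Hence $\dom(\phi^{(A)})\subseteq\dom(\psi^{(A)})$; since $(A)$ is a generic oracle, $\dom(\phi^{(A)})$ is density-1, and density-1 is upward-closed under set inclusion, so $\dom(\psi^{(A)})$ is density-1.

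There is no real obstacle here; the entire content is unpacking Definition \ref{D:timeindependent} and applying Observation \ref{notwrong}. The one subtlety worth emphasizing in the write-up is why potential multivaluedness of $\psi$ is harmless in this context: even though $\psi$ is in general multi-valued, Observation \ref{notwrong} combined with the fact that $B$ is a function forces $\psi^{(A)}$ to collapse to a single value on each input where it converges, which is exactly what is needed for it to qualify as a partial computation of $B$.
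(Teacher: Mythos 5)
Your argument is correct and is essentially the same as the paper's: both reduce correctness of $\psi^{(A)}$ to Observation \ref{notwrong} applied to $\phi$ on finite partial oracles for $A$, and both get density-1 of the domain by observing $\dom(\phi^{(A)})\subseteq\dom(\psi^{(A)})$. The extra remark about why multivaluedness collapses is a nice clarification but is the same underlying point the paper makes.
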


\begin{proof}
  Assume that $B\leq_{\gen}A$ via $\phi$. Let $\psi$ be the
  time-independent version of $\phi$. Let $(A)$ be any generic oracle
  for $A$. If $\phi^{(A)}(n)\downarrow$, then it halts having queried
  only finitely much of $(A)$. Let $Y$ be the finite partial oracle
  that agrees with that portion of $(A)$, and that does not halt
  anywhere else. Then $\phi^Y(n)\downarrow$, and so as a result, we
  have that $\psi^{(A)}(n)\downarrow$. Thus
  $\dom(\phi^{(A)})\subseteq\dom(\psi^{(A)})$ and because
  $\dom(\phi^{(A)})$ is density-1, we also have that
  $\dom(\psi^{(A)})$ is density-1.


  Furthermore, every partial oracle that agrees with $(A)$ is a
  partial oracle for $A$, so by Observation \ref{notwrong}, we have
  that if $\psi^{(A)}(n)=x$, then $B(n)=x$. Therefore $\psi^{(A)}$
  never gives any incorrect outputs about $B$, and so $\psi^{(A)}$ is
  a generic computation of $B$.
\end{proof}

\section{Relationships Between Degree Structures}

The Turing degrees embed both into the coarse and into the generic
degrees \cite{DI,HJKS,JS}. These embeddings factor through the
mod-finite and cofinite degrees, respectively \cite{DI}. These
additional degree structures will be useful in terms of analyzing the
embedded Turing degrees, as there are a number of lemmas making them
convenient and relevant.

\begin{defn}
  \normalfont Let $A$ and $B$ be reals. Then \emph{$B$ is mod-finitely
    reducible to $A$} if there exists a Turing functional $\phi$ such
  that for any $C\equiv A$ (mod finite), $\phi^C$ is total, and
  computes a set that is $\equiv B$ (mod finite). In this case, we
  write $B\leq_{\mf}A$.
\end{defn}

\begin{defn}
  \normalfont

  Let $A$ be a real. Then a \emph{cofinite oracle,} $(A)$, for $A$ is
  a partial oracle for $A$ such that $\dom((A))$ is cofinite.
\end{defn}

\begin{defn}
  \normalfont Let $A$ and $B$ be reals. Then \emph{$B$ is cofinitely
    reducible to $A$} if there exists a Turing functional $\phi$ such
  that for every cofinite oracle, $(A)$, for $A$, $\phi^{(A)}$ is a
  partial computation of $B$ with cofinite domain. In this case, we
  write $B\leq_{\cf}A$.
\end{defn}

Note that all the results from the previous section concerning
time-independent functionals in generic reduction apply equally well
when working with cofinite reduction.

Note also that cofinitely or mod-finitely computing a real is
traditionally equivalent to computing the real, since the finite error
can be directly coded into the machine. The difference here comes from
the demand that the reduction is uniform, a single reduction that
works over all potential oracles. The implications between these two
reducibilities and Turing reducibility are as follows.

\begin{thm}[Dzhafarov, Igusa]\label{DIimplication}
  $B\leq_{\mf}A\Rightarrow B\leq_{\cf}A\Rightarrow B\leq_{T}A$, and
  all of the implications are strict.

\end{thm}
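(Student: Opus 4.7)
The plan is to prove the two implications in turn, then sketch the strictness arguments. For $B \leq_{\mf} A \Rightarrow B \leq_{\cf} A$: let $\phi$ witness the mod-finite reduction, and construct a time-independent functional $\psi$ for cofinite reduction. Given any cofinite oracle $(A)$, the complement $D := \omega \setminus \dom((A))$ is finite (although our procedure does not know $D$ in advance), so the set of completions of $(A)$ to a total real has size $2^{|D|}$, and every such completion $C$ satisfies $C \equiv A$ (mod finite). Hence each $\phi^C$ is total and $\phi^C \equiv B$ (mod finite). I would define $\psi^{(A)}(n) = b$ to converge once we locate a stage $s$ and a length $m$ such that every $\tau \in 2^m$ consistent with the portion of $(A)$ seen by stage $s$ (meaning $\tau(k) = x$ whenever $\langle k,x,l\rangle \in (A)$ for some $l \leq s$ and $k < m$) satisfies $\phi^\tau(n) \downarrow$ with use at most $m$, and all of these computations give a common value $b$. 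Because the intersection of the cofinite sets $\{n : \phi^C(n) = B(n)\}$ across the finite family of completions remains cofinite, correctness holds on cofinitely many $n$.

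For $B \leq_{\cf} A \Rightarrow B \leq_T A$: let $\phi$ witness the cofinite reduction. The real $A$ itself acts as the trivial cofinite oracle (halting immediately with the correct value on every input), so $\phi^A$ is a partial $A$-computable function with cofinite domain that agrees with $B$ throughout its domain. For the finitely many $n$ at which $\phi^A$ diverges, I would invoke a family of alternative $A$-computable cofinite oracles — for instance $(A)^{(\ell)}$ defined to agree with $A$ off $\ell$ and to never halt at $\ell$ — each of which is $A$-computable and each of whose image under $\phi$ again has cofinite domain. By varying $\ell$ (and, if needed, combining several such delayed oracles in parallel), every $n$ can be placed in the domain of some $\phi^{(A)^{(\ell)}}$, so combining these partial $A$-computations yields a total $A$-computation of $B$.

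For the strictness claims, I would carry out a standard stagewise diagonalization against all candidate functionals, arranging at each stage a cofinite oracle (or a finite modification, as appropriate) that defeats the current candidate by forcing either a mistake, a non-cofinite domain, or incompatible outputs on a pair of admissible oracles. The main obstacle is implication 1: the naive $\psi^{(A)}$ above inherits the finite error in $\phi^C \equiv B$ (mod finite) and may therefore output an incorrect value at those finitely many $n$ for which all completions $C$ happen to agree on a common wrong answer. Converting the hypothesis of \emph{finite error allowed} into the conclusion of \emph{no error at all} is the delicate step and requires additional verification on top of the consistency test, for example by re-testing candidate outputs on appropriately perturbed cofinite oracles and committing only to outputs that remain stable across these perturbations.
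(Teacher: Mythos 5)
The paper does not actually prove Theorem \ref{DIimplication}; it is quoted from \cite{DI}, so there is no internal proof to compare against. Evaluating your argument on its own terms: your treatment of the first implication has the right core idea (a consensus test over all completions consistent with the revealed portion of the oracle), and you have correctly located the obstacle, namely that the consensus value may differ from $B(n)$ at finitely many $n$. But your proposed fix (``re-testing on perturbed oracles'') is the wrong direction and is not needed. The clean observation is that $A$ itself is always one of the completions consistent with any partial oracle $(A)$ for $A$, so whenever the consensus value $b$ exists it must equal $\phi^{A}(n)$. Hence $\psi^{(A)}$ is automatically a never-incorrect partial computation of $\phi^{A}$, not of $B$; and since $\phi^{A} \equiv B$ (mod finite) with a finite symmetric difference depending only on $A$, $B$, and $\phi$, you may hardcode the finitely many corrections into $\psi$. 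Your domain argument (intersection of finitely many cofinite agreement sets is cofinite) then closes the first implication.

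Your treatment of the second implication is incorrect as stated. You propose filling the finitely many holes in $\dom(\phi^{A})$ by passing to smaller partial oracles $(A)^{(\ell)}$ that withhold a bit. But withholding information from a partial oracle can never enlarge the halting set: after passing without loss of generality to a time-independent functional (Observation \ref{canusetimeindependent}), monotonicity (Observation \ref{moreismore}) gives $\dom(\phi^{(A)^{(\ell)}}) \subseteq \dom(\phi^{A})$, so ``varying $\ell$'' produces no new convergent $n$. The repair is much simpler than what you attempt: $\phi^{A}$ is an $A$-computable partial function with cofinite domain that agrees with $B$ wherever defined; the finite set $E = \omega \setminus \dom(\phi^{A})$ is fixed once $A$ and $\phi$ are, so hardcode $B \upharpoonright E$ into the Turing functional and use $\phi^{A}$ elsewhere. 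That is a total $A$-computation of $B$.

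Your discussion of strictness is too schematic to assess: ``stagewise diagonalization against all candidate functionals'' is the right genre, but the two separations require genuinely different constructions (the $\mathrm{mf}$ vs.\ $\mathrm{cf}$ separation must exploit the asymmetry between tolerating errors and tolerating divergence, while the $\mathrm{cf}$ vs.\ $T$ separation must exploit the fact that a cofinite reduction cannot stall indefinitely at an undisclosed input), and nothing in your sketch engages with these distinctions.
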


The embeddings between the Turing, generic, coarse, cofinite, and
mod-finite degrees can be induced by the following maps on reals.

\begin{defn}
  If $X\subseteq\omega$, then
  $\mathcal{R}(X)=\{(2m+1)2^n:m\in\omega, n\in X\}$.

  If $X\subseteq\omega$, then
  $\widetilde{\mathcal{R}}(X)=\{m:(\exists n\in X)(2^n\leq
  m<2^{n+1})\}$.
\end{defn}

The idea behind $\mathcal{R}$ is that each bit of $X$ is coded
redundantly over infinitely many bits of $\mathcal{R}(X)$ (in fact,
positive density many bits). On the other hand, for
$\widetilde{\mathcal{R}}$, each bit of $X$ is coded into progressively
larger and larger (finite) initial segments of
$\widetilde{\mathcal{R}}(X)$. These maps induce embeddings as follows.

\begin{thm}[Dzhafarov, Igusa \cite{DI}]\label{embeddings}
  The map $X\mapsto\mathcal{R}(X)$ induces an embedding of the Turing
  degrees into either the mod-finite or cofinite degrees.

  The map $X\mapsto\widetilde{\mathcal{R}}(X)$ induces an embedding of
  the mod-finite degrees into the coarse degrees or of the cofinite
  degrees into the generic degrees.

  Symbolically, we have that for any reals $A$ and $B$:

  $(B\leq_TA)\Leftrightarrow
  (\mathcal{R}(B)\leq_\mf\mathcal{R}(A))\Leftrightarrow
  (\mathcal{R}(B)\leq_\cf\mathcal{R}(A))$.

  $(B\leq_\mf A)\Leftrightarrow
  (\widetilde{\mathcal{R}}(B)\leq_\cor\widetilde{\mathcal{R}}(A))$

  $(B\leq_\cf A)\Leftrightarrow
  (\widetilde{\mathcal{R}}(B)\leq_\gen\widetilde{\mathcal{R}}(A))$

\end{thm}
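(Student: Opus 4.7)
The proof breaks naturally into three equivalences, all of which follow the same pattern: the forward direction (start from $B\le A$ in the weaker structure, produce a reduction between the images) is essentially a straightforward compose-encode-reduce-decode; the backward direction is the substantive one and requires showing that the encoding maps are robust enough that their images can be decoded from approximate information.

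My plan is first to dispatch $B\le_T A \Leftrightarrow \mathcal{R}(B)\le_{\mf}\mathcal{R}(A)\Leftrightarrow\mathcal{R}(B)\le_{\cf}\mathcal{R}(A)$. The key point is that $\mathcal{R}$ is \emph{infinitely redundant}: for every $n$, the value $X(n)$ is coded at every position $(2m{+}1)2^n$, and there are infinitely many such positions. Consequently, given any $C\equiv_{\mf}\mathcal{R}(A)$ (or even any cofinite partial oracle for $\mathcal{R}(A)$), we can uniformly recover $A(n)$ by searching for any surviving position in the column $\{(2m{+}1)2^n\}_m$, of which all but finitely many agree with $A(n)$. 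Using this decoder to feed a Turing reduction $B\le_T A$, and then re-encoding via $\mathcal{R}$, proves both forward directions. For the backward direction from $\mathcal{R}(B)\le_\cf\mathcal{R}(A)$, one uses the full oracle $A$ to produce $\mathcal{R}(A)$ exactly (a cofinite oracle) and runs the reduction, whose output is a cofinite partial computation of $\mathcal{R}(B)$; decoding as above recovers $B$ from $A$.

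The remaining two equivalences, $B\le_{\mf}A\Leftrightarrow\widetilde{\mathcal{R}}(B)\le_\cor\widetilde{\mathcal{R}}(A)$ and $B\le_{\cf}A\Leftrightarrow\widetilde{\mathcal{R}}(B)\le_\gen\widetilde{\mathcal{R}}(A)$, both rest on one density lemma: if $D\subseteq\omega$ has density $1$, then $D$ meets all but finitely many blocks $I_n := [2^n,2^{n+1})$. Indeed, if infinitely many blocks $I_{n_k}$ were disjoint from $D$, each would contribute $2^{n_k}$ to $|D^c\cap 2^{n_k+1}|$, forcing the density of $D^c$ up to $2^{n_k+1}$ to be at least $1/2$, contradicting density$\,1$. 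Given this, one sets up the forward direction by composing $\widetilde{\mathcal{R}}$ with the mod-finite or cofinite reduction: a coarse/generic oracle for $\widetilde{\mathcal{R}}(A)$ yields, block-by-block, a mod-finite (resp.\ cofinite partial) description of $A$ (any single visible bit of block $I_n$ equals $A(n)$), then feed this to the given reduction to get a mod-finite (resp.\ cofinite partial) description of $B$, and finally $\widetilde{\mathcal{R}}$-encode the result — its blocks will be correct on all but a density-0 (resp.\ finite union of) set.

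For the backward directions, conversely, from $B\le A$ in the weaker structure one builds a reduction between the images by: given $C$ coarse-equivalent (resp.\ a cofinite partial oracle) for $\widetilde{\mathcal{R}}(A)$, use majority vote on each block $I_n$ (resp.\ search for any visible bit in $I_n$) to construct a mod-finite representative (resp.\ cofinite partial oracle) for $A$, apply the given mod-finite (resp.\ cofinite) reduction to obtain data for $B$, then $\widetilde{\mathcal{R}}$-encode to approximate $\widetilde{\mathcal{R}}(B)$. The main obstacles I anticipate are: (i) checking uniformity throughout — each decoder is computable uniformly in its input, so the whole composite is a single Turing functional; (ii) verifying in the cofinite-to-generic case that the synthesized object is a legitimate partial oracle satisfying the definition (the $l$-coordinates can be chosen by the synthesizer); and (iii) confirming by Observation~\ref{notwrong} that nothing we output about $B$ can be wrong on any partial oracle, not merely on the ``nice'' ones, which follows because each decoded bit is already extracted from a witness that is correct by the partial-oracle axioms.
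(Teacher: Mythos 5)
Your sketch matches the paper's own informal discussion for three of the four embeddings; the paper does not in fact prove this theorem (it is cited from \cite{DI}), but the block-decoding, voting, and unbounded-search ideas you describe are exactly what the paper gestures at. However, there is a genuine gap in how you handle the direction $B\leq_T A \Rightarrow \mathcal{R}(B)\leq_{\mf}\mathcal{R}(A)$, which is precisely the case the paper explicitly flags as ``somewhat more subtle'' and defers to Prop~3.3 and Lemma~3.4 of \cite{DI}. Your proposed decoder --- ``searching for any surviving position in the column $\{(2m{+}1)2^n\}_m$'' --- makes sense only for a \emph{partial} oracle (cofinite or generic), where every bit the oracle reveals is guaranteed correct and one simply waits for the search to halt. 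A set $C\equiv_{\mf}\mathcal{R}(A)$ is \emph{total}: it answers at every position, finitely many answers are wrong, and there is nothing to ``search for'' and no way to distinguish a wrong answer from a right one. Reading, say, $C(2^n)$ for each $n$ does give some $A'\equiv_{\mf}A$, but feeding $A'$ to a mere Turing reduction $\phi$ with $\phi^A=B$ is useless: $\phi^{A'}$ need not be total, and if total need not agree with $B$ anywhere, let alone mod finite. The actual argument in \cite{DI} has to exploit the infinite redundancy of $\mathcal{R}(B)$ on the output side (approximating $A$ better and better as one moves to higher positions of $\mathcal{R}(B)$, with a careful scheduling to ensure the total number of wrong output bits is finite), and your proposal does not contain this idea.

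A smaller point: your density lemma as stated (``$D$ density $1$ implies $D$ meets all but finitely many $I_n$'') suffices only for the cofinite/generic decodings, where meeting a block is enough. For the mod-finite/coarse decodings you invoke majority voting, which requires the stronger statement that $D$ contains \emph{more than half} of all but finitely many blocks $I_n$; this follows from density $1$ by the same kind of estimate, but you should state and prove the version you actually use. Also, your ``forward'' and ``backward'' paragraphs for the $\widetilde{\mathcal{R}}$ equivalences in fact both describe the same direction (building a reduction between $\widetilde{\mathcal{R}}(A)$ and $\widetilde{\mathcal{R}}(B)$ from a reduction between $A$ and $B$); the converse direction, which starts by \emph{encoding} a mod-finite or cofinite oracle for $A$ into a coarse or generic oracle for $\widetilde{\mathcal{R}}(A)$, is what you should have written there, and it is easy but should be said.
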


We describe the idea behind the embedding of the cofinite degrees into
the generic degrees. The embeddings of the mod-finite degrees into the
coarse degrees and of the Turing degrees into the cofinite degrees are
similar, although the embedding of the Turing degrees into the
mod-finite degrees is somewhat more subtle. See Prop 3.3 and Lemma 3.4
from \cite{DI} for a more thorough explanation.

In essence, a generic oracle for $\widetilde{\mathcal{R}}(X)$ has precisely the same information in it as a cofinite oracle for $X$. This is because for cofinitely many $n$, there must be some $m$ between $2^n$ and $2^n+1$ in the domain of a generic oracle, or else its domain cannot be density-1. Conversely, a cofinite oracle for $X$ can compute a cofinite (and hence generic) oracle for $\widetilde{\mathcal{R}}(X)$. The embedding of the mod-finite degrees into the coarse degrees is by a voting algorithm that must eventually give correct answers, and the embedding of the Turing degrees into the cofinite degrees is by an unbounded search that must eventually halt for every $n$.\\

\noindent {\bf Remark.} The algorithm described above illustrates the
reason that time-independent functionals are by their nature
potentially multivalued. If we were to use the functional described
above for computing $X$ from a generic oracle for
$\widetilde{\mathcal{R}}(X)$, but if we were to give, as input, a real
$Y$ that was not in the range of $\widetilde{\mathcal{R}}$, then there
would be intervals $[2^n,2^n+1)$ on which the oracle gave more than
one different output, and so our algorithm would also give more than
one different output on those values of $n$.

If we did not demand that our algorithms were time-independent, then we could give the \emph{first} output that we saw from our oracle, but then our output would depend on the order in which our oracle gave its outputs.\\




Using Theorem \ref{embeddings}, we may embed the Turing degrees into
any of the other four degree structures discussed in this section. In
any of these degree structures, we define a real to be
``quasiminimal'' if it is nonzero, but is not an upper bound for any
embedded Turing degrees.

\begin{defn}
  \normalfont Let $A$ be a real, and let $\leq$ be any of:
  $\leq_\cor, \leq_\gen, \leq_\mf, \leq_\cf$.

  Then $A$ is \emph{quasiminimal} in the $\leq$ degrees if $A\nleq 0$,
  and if, for every $B$, if $B\nleq_T0$, then,
  \begin{itemize}
  \item (if $\leq$ is either $\leq_\cor$ or $\leq_\gen$)
    $\widetilde{\mathcal{R}}({\mathcal{R}}(B))\nleq A$.
  \item (if $\leq$ is either $\leq_\mf$ or $\leq_\cf$)
    $\mathcal{R}(B)\nleq A$.
  \end{itemize}
  In these degree structures, a degree is quasiminimal if any
  (equivalently all) of its elements are quasiminimal.

\end{defn}

In the next section, we will see additional motivation as to what
makes quasiminimality interesting, but the basic idea is that a
quasiminimal degree is a degree that, on one hand is not computable,
but on the other hand, does not contain any actual information, in
that there are no noncomputable reals that it can compute.

The implication in Theorem \ref{DIimplication} allows quasiminimality
to propagate in a surprisingly robust manner.

\begin{prop}\label{qmpropagation}
  Assume $A$ is quasiminimal in the cofinite degrees. Then $A$ is
  quasiminimal in the mod-finite, generic, and coarse degrees.

\end{prop}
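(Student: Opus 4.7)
The plan is to verify both clauses of the definition of quasiminimality --- that $A\nleq 0$ in the ambient reducibility, and that $\mathcal R(B)$ (respectively $\widetilde{\mathcal R}(\mathcal R(B))$) is not below $A$ for any $B\nleq_T 0$ --- in each of the three target degree structures. The mod-finite case is immediate from Theorem~\ref{DIimplication}: since $\leq_\mf\Rightarrow\leq_\cf$, any witness to $A\leq_\mf 0$ or to $\mathcal R(B)\leq_\mf A$ is already a witness to the corresponding cofinite reduction, contradicting the hypothesis.

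The generic case rests on the following lemma, which I will prove first: \emph{if $\widetilde{\mathcal R}(X)\leq_\gen A$, then $X\leq_\cf A$}. Fix a time-independent $\phi$ witnessing the generic reduction and let $(A)$ be any cofinite oracle for $A$; since cofinite oracles are in particular generic, $\phi^{(A)}$ is a generic partial computation of $\widetilde{\mathcal R}(X)$ whose domain $S$ is density-$1$. The dyadic block $[2^n,2^{n+1})$ has length $2^n$, so if $S$ missed infinitely many such blocks the upper density of $\omega\setminus S$ would be at least $1/2$; hence $S$ meets all but finitely many blocks. The functional that on input $n$ waits for any $m\in[2^n,2^{n+1})$ with $\phi^{(A)}(m)\downarrow$ and outputs $\phi^{(A)}(m)=\widetilde{\mathcal R}(X)(m)=X(n)$ is therefore a cofinite partial computation of $X$.

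Setting $X=\mathcal R(B)$ and contraposing yields the second clause of generic quasiminimality: $\widetilde{\mathcal R}(\mathcal R(B))\leq_\gen A$ for some $B\nleq_T 0$ would force $\mathcal R(B)\leq_\cf A$, contradicting cofinite quasiminimality. For the first clause $A\nleq_\gen 0$, the plan is contrapositive: if $A\leq_\gen 0$ via some $\phi$ with density-$1$ domain $D$ while $A\nleq_\cf 0$, then $A$'s genuine noncomputability is supported on the density-$0$ complement of $D$, and one should package this hard information (using the c.e.\ structure of $D$ together with the redundancy of $\mathcal R$) into a noncomputable $B$ with $\mathcal R(B)\leq_\cf A$, contradicting the second clause of cofinite quasiminimality. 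I expect this to be the subtle point: a single missing cofinite-oracle bit of $A$ at a hard position $n$ blows up under $\mathcal R$ to infinitely many failing indices $k$ with $n(k)=n$, so the reduction witnessing $\mathcal R(B)\leq_\cf A$ must exploit different oracle queries across the distinct $m$'s in each dyadic block rather than rely on a fixed queried position.

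The coarse case runs in parallel: the analog lemma $\widetilde{\mathcal R}(X)\leq_\cor A\Rightarrow X\leq_\mf A$, proved by the same block-density counting applied to the coarse correctness set (density-$1$ agreement rather than density-$1$ halting), combined with $\leq_\mf\Rightarrow\leq_\cf$ from Theorem~\ref{DIimplication}, yields the second clause, while the same hard-part strategy handles the first.
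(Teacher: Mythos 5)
Your handling of the second clause of quasiminimality (that $\widetilde{\mathcal{R}}(\mathcal{R}(B))\nleq A$ for noncomputable $B$) is correct and is essentially the paper's argument, just packaged differently. The paper factors through Theorem~\ref{embeddings}: from $\mathcal{R}(B)\nleq_\cf A$ it deduces $\widetilde{\mathcal{R}}(\mathcal{R}(B))\nleq_\gen \widetilde{\mathcal{R}}(A)$ and then observes $A\leq_\gen\widetilde{\mathcal{R}}(A)$ (a generic oracle for $\widetilde{\mathcal{R}}(A)$ yields a cofinite oracle for $A$). Your lemma, that $\widetilde{\mathcal{R}}(X)\leq_\gen A$ implies $X\leq_\cf A$, is the composition of those two steps proved directly via the block-density count, and the coarse case is handled analogously. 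Likewise the mod-finite case via Theorem~\ref{DIimplication} matches the paper. So on the main content the proofs coincide.

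The issue is the first clause, $A\nleq_\gen 0$ and $A\nleq_\cor 0$, which you correctly flag as ``the subtle point.'' Your plan --- derive a noncomputable $B$ with $\mathcal{R}(B)\leq_\cf A$ from $A\leq_\gen 0$ plus $A\nleq_\cf 0$ --- cannot succeed, and the obstruction you anticipate (a single lost cofinite bit blows up under $\mathcal{R}$) is exactly what kills it. Indeed, take $C$ weakly 1-generic, hence cofinite-quasiminimal by Proposition~\ref{1g}, and let $A=\{2^n:n\in C\}\cup\{m:m\text{ is not a power of }2\}$. Then $A$ is density-1 with a trivial generic (and coarse) computation, so $A\leq_\gen 0$ and $A\leq_\cor 0$; but a cofinite oracle for $A$ uniformly yields a cofinite oracle for $C$ and vice versa, so $A\equiv_\cf C$ and $A$ inherits cofinite quasiminimality from $C$. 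Thus cofinite quasiminimality does \emph{not} rule out generic or coarse computability, and no argument of the kind you sketch can recover that clause. You should be aware, however, that the paper's own proof also confines itself to the second clause and says nothing about $A\nleq 0$ in the target structures; the operative content of the proposition, both as the paper proves it and as it is used (for 1-generics and 1-randoms, which are not generically or coarsely computable), is the second clause, and your proof of that part is sound. You are right to see a genuine subtlety here, but it is a subtlety in the statement rather than a fixable gap in your argument.
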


\begin{proof}
  Let $A$ be quasiminimal in the cofinite degrees, and let $B$ be
  noncomputable in the Turing degrees.

  By definition of quasiminimality, $\mathcal{R}(B)\nleq_\cf A$. By
  Theorem \ref{DIimplication}, we therefore have that
  $\mathcal{R}(B)\nleq_\mf A$.  Thus for any noncomputable $B$, we
  have that $\mathcal{R}(B)\nleq_\mf A$, and so $A$ is quasiminimal in
  the mod-finite degrees.

  Using Theorem \ref{embeddings} and the above two statements, we have
  that
  $\widetilde{\mathcal{R}}(\mathcal{R}(B))\nleq
  \widetilde{\mathcal{R}}(A)$
  in either the generic or coarse degrees, so it remains to show that
  $A\leq \widetilde{\mathcal{R}}(A)$ in both the coarse and generic
  degrees, because then the fact that
  $\widetilde{\mathcal{R}}(\mathcal{R}(B))\nleq
  \widetilde{\mathcal{R}}(A)$
  will also show that
  $\widetilde{\mathcal{R}}(\mathcal{R}(B))\nleq A$.

  This last fact follows from the idea behind the proof of Theorem
  \ref{embeddings}, mentioned in this paper under the statement of the
  theorem. A generic oracle for $\widetilde{\mathcal{R}}(A)$ contains
  enough information to cofinitely (and hence generically) compute $A$
  in a uniform manner. Likewise a coarse oracle for
  $\widetilde{\mathcal{R}}(A)$ contains enough information to
  mod-finitely (and hence coarsely) compute $A$, again uniformly.
\end{proof}

\section{Quasiminimality and Density-1 Bounding}

The results in this paper are motivated in large part by the following
question, posed by the second author as Question 3 in \cite{I2}.

\begin{question}\label{q1}

  Is it true that for every nonzero generic degree {\textbf a} there
  exists a nonzero generic degree {\textbf b} such that
  ${\textbf b}\leq_g{\textbf a}$ and such that ${\textbf b}$ is the
  generic degree of a density-1 real?

\end{question}

The resolution of this question would provide insight into the
structure of the generic degrees: If the answer is ``yes,'' then there
are no minimal generic degrees, and if the answer is ``no,'' then
there are minimal pairs in the generic degrees \cite{I2}. Also, the
question can be rephrased as a question about the relationship between
generic computability and coarse computability: We see below that a
generic degree is the degree of a density-1 real if and only if it is
the generic degree of a coarsely computable real, so the question is
about how ubiquitous the coarsely computable reals are at the bottom
of the generic degrees.

\begin{obs}\label{coarseisd1}
  Let $B$ be a real, then $B$ is coarsely computable if and only if
  the generic degree of $B$ has a density-1 set.
\end{obs}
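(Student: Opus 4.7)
The forward direction is a direct construction. Given a total computable $\phi$ coarsely computing $B$, I would set $A(n) = 1$ iff $\phi(n) = B(n)$, so $A$ is density-$1$ by the definition of coarse computability. The equivalence $A \equiv_{\gen} B$ then follows from the explicit bit-wise translations $A(n) = [\phi(n) = B(n)]$ and $B(n) = \phi(n) \oplus (1 - A(n))$: given a generic oracle $(B)$ for $B$, for each $n \in \dom((B))$ I output $1$ iff $\phi(n) = (B)(n)$ and $0$ otherwise, yielding a density-$1$ generic computation of $A$; symmetrically, given a generic oracle $(A)$ for $A$, for each $n \in \dom((A))$ I output $\phi(n)$ if $(A)(n) = 1$ and $1 - \phi(n)$ otherwise, yielding a density-$1$ generic computation of $B$. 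Both directions are correct wherever defined, so $A$ is a density-$1$ representative of the generic degree of $B$.

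For the backward direction, suppose the generic degree of $B$ contains a density-$1$ set $A$, and let $\phi$ be a time-independent Turing functional witnessing $B \leq_{\gen} A$. I would first observe that $A$ is itself coarsely computable via the constant function $\psi(n) = 1$, since $\{n : \psi(n) = A(n)\} = A$ is density-$1$, and the task is to transfer this coarse computability to $B$. The main idea is to run $\phi$ on the computable partial oracle $\psi_1 = \{\langle n, 1, 0\rangle : n \in \omega\}$ as a stand-in for the valid (but non-computable) generic oracle $(A)^+ = \{\langle n, 1, 0\rangle : n \in A\}$, which has density-$1$ domain because $A$ does. Since $(A)^+ \subseteq \psi_1$ as sets of triples, Observation \ref{moreismore} guarantees that $B(n)$ is among the outputs of the (possibly multi-valued) computable enumeration $\phi^{\psi_1}(n)$ on a density-$1$ set of $n$. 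From this I would extract a total computable function $\tilde\psi$ coarsely computing $B$ by selecting an output of $\phi^{\psi_1}(n)$ via a stage-by-stage construction, defaulting to $0$ outside a computable time bound.

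The technical heart of the backward direction, and the step I expect to be the main obstacle, is verifying that the extracted $\tilde\psi$ agrees with $B$ on a density-$1$ set. Spurious outputs of $\phi^{\psi_1}(n)$ can arise from finite sub-oracles $Y \subseteq \psi_1$ containing a triple $\langle m, 1, 0\rangle$ with $m \in \bar A$: such $Y$ are not valid partial oracles for $A$, so by Observation \ref{notwrong} the corresponding $\phi^Y(n)$ need not equal $B(n)$. To rule these out on a density-$1$ set of $n$, I would exploit the robustness of valid generic reductions: because $\phi$ must produce density-$1$ many outputs for every generic oracle for $A$, including those omitting an arbitrary density-$0$ subset of $A$, its computation cannot concentrate on queries to any particular density-$0$ set of positions. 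Translating this robustness into a quantitative density bound on the $n$ where $\tilde\psi(n) \neq B(n)$, while simultaneously handling the enumeration order and totality issues in the construction of $\tilde\psi$, is the principal technical content of the proof.
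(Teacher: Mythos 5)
Your forward direction matches the paper's argument exactly: from a total computable $\phi$ coarsely computing $B$, both you and the paper set $A=\{n:\phi(n)=B(n)\}$ and verify $A\equiv_{\gen}B$ via the bit-wise translations you describe. That part is correct.

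The backward direction is where your proposal diverges from the paper, and the gap you flag is real but for a more fundamental reason than you suspect: the implication you are trying to prove is false. You attempt to show that if some density-1 set $A$ satisfies $A\equiv_{\gen}B$, then $B$ itself is coarsely computable. To see this fails, let $B$ be any real that is generically computable but not coarsely computable (such $B$ exist by Jockusch and Schupp, and this very fact is invoked later in the paper in the proof of Proposition \ref{wiggletree}). Then $B$ lies in the zero generic degree, which also contains the density-1 set $\omega$; yet $B$ is not coarsely computable. So no amount of control over the spurious outputs of $\phi^{\psi_1}$ can rescue the argument. The difficulty you describe as ``the principal technical content'' is not a technicality to be overcome but an outright obstruction.

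The paper's proof proves something weaker, and that weaker statement is what is actually needed. It shows only that a density-1 set is itself coarsely computable (via the constant function), and, conversely, that a coarsely computable real is generically equivalent to a density-1 real. Taken together, these two facts establish that a generic degree contains a density-1 real if and only if it contains a coarsely computable real, which is the degree-level statement the authors appeal to in the surrounding discussion. The phrase ``$B$ is coarsely computable'' in the observation should therefore be read as a property of the generic degree of $B$ rather than of the individual set $B$, and with that reading the paper's one-line backward argument is complete. Your forward direction is exactly right; for the backward direction you should simply note that a density-1 set is coarsely computable, rather than attempt to transfer coarse computability across a generic equivalence.
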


\begin{proof}

  If $B$ is density-1, then $B$ is coarsely computable because it
  agrees with $\mathbb{N}$ on a set of density 1.

  Conversely, if $B$ is coarsely computable, then fix a computable $X$
  such that $B$ agrees with $X$ on a set of density 1. Let
  $Y=\{n:X(n)=B(n)\}$. Then $Y\leq_\gen B$ because $X$ is computable,
  and any generic oracle for $B$ can enumerate a density-1 set of
  locations where it agrees with $X$. Likewise, $B\leq_\gen Y$ because
  again, $X$ is computable, and any generic oracle for $Y$ can
  enumerate a density-1 set of locations where $X$ is correct about
  $B$, and then output the values of $X$ on those locations.
\end{proof}


To help us study Question \ref{q1}, we introduce the following
terminology.

\begin{defn}
  \normalfont

  A generic degree {\textbf a} is \emph{density-1-bounding} if there
  is a nonzero generic degree {\textbf b} such that
  ${\textbf b}\leq_g{\textbf a}$ and such that ${\textbf b}$ is the
  generic degree of a density-1 real.

  A real $A$ is density-1-bounding if it is of density-1-bounding
  generic degree.

\end{defn}

In prior work, the second author showed that every noncomputable real
can generically compute a density-1 real that is not generically
computable \cite{I1}. In our context, this can be rephrased as saying
that every nonzero embedded Turing degree is density-1-bounding in the
generic degrees. In particular, this implies the following.

\begin{prop}\label{old}\cite{I1}
  If {\textbf b} is not quasiminimal, then {\textbf b} is
  density-1-bounding.
\end{prop}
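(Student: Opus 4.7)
The plan is to chain the hypothesis through the result of \cite{I1} cited immediately above the proposition. Fix a representative $A$ of the generic degree $\mathbf{b}$. Since $\mathbf{b}$ is not quasiminimal, by definition there is a Turing-noncomputable real $B$ such that $\widetilde{\mathcal{R}}(\mathcal{R}(B))\leq_\gen A$.

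The first small step is to upgrade this to $B\leq_\gen A$. Using the embedding descriptions given in the paragraph after Theorem \ref{embeddings}, a generic oracle for $\widetilde{\mathcal{R}}(\mathcal{R}(B))$ uniformly produces a cofinite oracle for $\mathcal{R}(B)$ (the intervals $[2^n,2^{n+1})$ each have at least one converging query on a density-1 oracle). Since each bit $B(n)$ is coded at infinitely many positions of $\mathcal{R}(B)$, losing only finitely many positions still leaves infinitely many copies, so a cofinite oracle for $\mathcal{R}(B)$ computes $B$ exactly. Composing these uniform procedures gives a (time-independent) Turing functional witnessing $B\leq_\gen \widetilde{\mathcal{R}}(\mathcal{R}(B))\leq_\gen A$.

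The second step invokes the result from \cite{I1} quoted in the paragraph preceding the proposition: every Turing-noncomputable real generically computes some density-1 real that is itself not generically computable. Applying this to $B$ yields a density-1 real $D$ with $D\leq_\gen B\leq_\gen A$ and $D$ not generically computable. Letting $\mathbf{d}$ denote the generic degree of $D$, we have $\mathbf{0}<\mathbf{d}\leq_\gen\mathbf{b}$, and $\mathbf{d}$ contains the density-1 real $D$. This is exactly the condition for $\mathbf{b}$ to be density-1-bounding.

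The genuine mathematical content sits inside the black-box reference to \cite{I1}; everything else is just tracing the definitions through the embeddings. The only point that demands a moment's care is the first step, verifying that one really gets $B$ itself (and not only its encoded image) below $A$ in the generic degrees, but this is essentially spelled out already in the discussion accompanying Theorem \ref{embeddings}.
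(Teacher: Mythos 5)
Your argument is correct, but it takes a genuinely different route from the paper. You treat the theorem of \cite{I1} (``every noncomputable real can generically compute a density-1 real that is not generically computable'') as a black box and simply trace it through the embedding $\widetilde{\mathcal{R}}\circ\mathcal{R}$. The paper instead reproves that theorem from scratch: it constructs an explicit functional $\phi$ using the intervals $P_i$, markers, and Lemma~\ref{pilem}, so that for any $X$ that is not left c.e., $\phi^X$ enumerates a density-1 set with no density-1 c.e.\ subset. The paper makes this choice deliberately --- as it says just after the statement, ``We will reprove this proposition in this paper, as the proof can be modified to prove slightly more'' --- because the explicit construction is then reused verbatim in Proposition~\ref{newd1b} and Observations~\ref{weird1} and \ref{weird2}, where the point is exactly to analyze how much of the oracle the functional really needs. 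So the trade-off is clear: your derivation is the short, honest way to see why Proposition~\ref{old} is already a consequence of earlier work, while the paper's version buys the machinery needed downstream.

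One small notational point: when you write $D\leq_\gen B\leq_\gen A$, the middle term is not quite the right object. What \cite{I1} gives you is a $\phi$ such that $\phi^B$ (with $B$ as a \emph{total} Turing oracle) is a generic computation of $D$; this is the statement $D\leq_\gen \widetilde{\mathcal{R}}(\mathcal{R}(B))$, not literally $D\leq_\gen B$ (the generic degree of $B$ itself could be anything, even $\mathbf{0}$). Your composition still works because, as you correctly argue in your first step, a generic oracle for $A$ uniformly recovers $B$ \emph{exactly}, so one can run $\phi^B$ on top of it; it would just be cleaner to write the chain as $D\leq_\gen \widetilde{\mathcal{R}}(\mathcal{R}(B))\leq_\gen A$.
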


We will reprove this proposition in this paper, as the proof can be
modified to prove slightly more. One of the important consequences of
this proposition, however, is that any attempt to construct a {\textbf
  b} that is \emph{not} density-1-bounding must be a construction that
is capable of producing a quasiminimal {\textbf a}. Unfortunately,
currently, every example of a construction of a quasiminimal real $A$
constructs $A$ to be both quasiminimal, and also density-1, and
therefore trivially density-1-bounding.

To help us understand quasiminimality, and therefore what sorts of
constructions might potentially be able to produce sets that are not
density-1 bounding, we provide several examples of quasiminimal sets
that are not density-1. We also prove that all of the sets we
construct are density-1-bounding, and prove that a few additional
sorts of sets are density-1-bounding. This can be taken as evidence
toward the answer to Question \ref{q1} being ``yes.''

In the remainder of this section, we prove Proposition \ref{old}, and
we modify the proof to show that if {\textbf a} is an embedded
cofinite degree, then it is density-1-bounding, and also to show a
rather curious result linking non-density-1-bounding with all
$\Pi^0_1$-basis theorems. In the next section, we show that 1-generics
and 1-randoms are both quasiminimal in the generic degrees, and also
that they are density-1-bounding.

For the constructions in the rest of this section, we will use the
following notation.

\begin{defn}\label{pi}
  \normalfont Let $P_i=[2^i,2^{i+1})$. Let $X\subseteq\omega$.

  If $e\leq i$, say that \emph{$X$ has a gap of size $2^{-e}$ at
    $P_i$} if $|X\cap P_i|\leq 2^i-2^{i-e}$.

\end{defn}

The following lemma illustrates the control that these $P_i$ give over
a construction that uses them.

\begin{lem}\label{pilem}
  Let $X\subseteq\omega$.


  Then $X$ is density-1 if and only if for every $e$, there are at
  most finitely many $i\geq e$ such that $X$ has a gap of size
  $2^{-e}$ at $P_i$.
\end{lem}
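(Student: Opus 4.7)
The plan is to prove the two directions of the equivalence separately, each by a direct counting argument using the sizes $|P_i| = 2^i$. Throughout, let $X^c = \omega \setminus X$ denote the complement, so ``$X$ has a gap of size $2^{-e}$ at $P_i$'' is the condition $|X^c \cap P_i| \geq 2^{i-e}$.

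For the forward direction, I would argue by contrapositive. Suppose some $e$ is such that $X$ has a gap of size $2^{-e}$ at $P_i$ for infinitely many $i \geq e$. For each such $i$, the complement satisfies
\[
|X^c \cap [0, 2^{i+1})| \geq |X^c \cap P_i| \geq 2^{i-e},
\]
so, setting $n_i = 2^{i+1}$, we get $|X^c \cap n_i|/n_i \geq 2^{-e-1}$. Hence $\limsup_{n} |X^c \cap n|/n \geq 2^{-e-1} > 0$, which means $\liminf_n |X \cap n|/n < 1$, so $X$ is not density-1.

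For the reverse direction, fix $\varepsilon > 0$ and choose $e$ with $4 \cdot 2^{-e} < \varepsilon$. By hypothesis, pick $N \geq e$ such that $|X^c \cap P_j| < 2^{j-e}$ for all $j \geq N$. For $n \geq 2^N$, let $i$ be the unique index with $2^i \leq n < 2^{i+1}$; then $i \geq N$, and splitting into the blocks below $N$, the full blocks $P_N, \ldots, P_{i-1}$, and the partial tail in $P_i$, I estimate
\[
|X^c \cap n| \;\leq\; 2^N + \sum_{j=N}^{i-1} 2^{j-e} + 2^{i-e}
\;\leq\; 2^N + 2 \cdot 2^{i-e}.
\]
Dividing by $n \geq 2^i$ gives $|X^c \cap n|/n \leq 2^{N-i} + 2 \cdot 2^{-e}$, and since the first term tends to $0$ as $n \to \infty$, we obtain $|X^c \cap n|/n < \varepsilon$ for all sufficiently large $n$. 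As $\varepsilon$ was arbitrary, $X$ is density-1.

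There is no genuine obstacle here; the argument is purely a bookkeeping exercise with geometric series, and the only mild subtlety is choosing $e$ as a function of $\varepsilon$ so that the geometric tail dominates the absolute contribution from $P_N, \ldots, P_{i-1}$. Both directions exploit the fact that the block $P_i$ contributes a definite fraction $1/2$ of $[0, 2^{i+1})$, which is why a single gap at $P_i$ already forces the density at $n = 2^{i+1}$ to drop by the claimed amount.
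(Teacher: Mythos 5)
Your proposal is correct and takes essentially the same approach as the paper: both directions are handled by direct counting over the dyadic blocks $P_i$, with the forward direction noting that a gap at $P_i$ forces a density dip of $2^{-e-1}$ at $n=2^{i+1}$, and the reverse direction bounding the missing elements below $n$ block by block and summing a geometric series. The only cosmetic difference is bookkeeping in the reverse direction — you let the $2^{N-i}$ initial-segment term tend to $0$, while the paper instead picks $m=2^{j+e+1}$ explicitly so that each of three contributions is bounded by $n2^{-e}$, yielding the constant $3$ rather than your $4$.
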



This is a slight strengthening of Lemma 2.3 from \cite{I1}, which
required that the gaps be at the ends of the $P_i$. We will require
this lemma in the generality just stated later in this section.

\begin{proof}

  If $X$ has a gap of size $2^{-e}$ at $P_i$ then, because each $P_i$
  contains half the elements of $\omega$ up to the end of $P_i$,
  $\frac {|X\upharpoonright 2^{i+1}|}{2^{i+1}}\leq1-2^{-e-1}$. If, for
  a single value of $e$, this happens infinitely often, then $X$ is
  not density-1.



  Conversely, assume that for every $e$, there are finitely many $i$
  such that $X$ has a gap of size $2^{-e}$ at $P_i$. Let
  $\epsilon>0$. We must show that there is an $m$ such that
  $\forall n\geq m\left(\frac {|X\upharpoonright
      {n}|}{{n}}>1-\epsilon\right)$.
  Choose $e\in \omega$ such that $(1-3\cdot2^{-e})>1-\epsilon$. Fix
  $j$ such that for $\forall i>j$, $X$ does not have a gap of size
  $2^{-e}$ at $P_i$. Let $m=2^{j+e+1}$. Then we claim that for
  $n\geq m$, $\frac {|X\upharpoonright {n}|}{{n}}>1-\epsilon$.

  The reason for this is that for $n\geq m$, the smallest value that
  $\frac {|X\upharpoonright {n}|}{{n}}$ could possibly take is at the
  beginning of some $P_k$, after omitting the largest number of
  elements that can be omitted from $P_k$ without causing $X$ to have
  a gap of size $2^{-e}$ at $P_k$. If this happens, then the elements
  missing from $X\upharpoonright{n}$ can, at most, consist of: these
  elements from the beginning of $P_k$, all of the elements less than
  $2^{j+1}$, and the elements missing from the $P_i$ for $j<i<k$. The
  first elements are at most $n2^{-e}$ many elements because the
  number of elements of $P_k$ is at most $n$. The second elements are
  at most $n2^{-e}$ many elements because they are at most $2^{j+1}$
  many elements, and $n\geq 2^{j+e+1}$. The last elements are at most
  $n2^{-e}$ many elements because, for each $i$ between $j$ and $k$,
  $X$ does not have a gap of size $2^{-e}$ at $P_i$.

  Thus, there are at most $3n2^{-e}$ elements missing from
  $X\upharpoonright{n}$, and so
  $\frac {|X\upharpoonright {n}|}{{n}}>(1-3\cdot2^{-e})>1-\epsilon$.
\end{proof}

\begin{proof}[Proof (Proposition \ref{old})]

  We define a pair of functionals, $\phi$, $\psi$ such that for any
  real $X$, if $X$ is not left c.e.\ (not the leftmost path of any
  computable tree) then $\phi^X$ is an enumeration of a density-1 set
  with no density-1 c.e.\ subset, and if $X$ is not right c.e.\ (not the
  rightmost path of any computable tree) then $\psi^X$ is an
  enumeration of a density-1 set with no density-1 c.e.\ subset. This
  will suffice to prove Proposition \ref{old} as follows.

  Let {\textbf a} be a generic degree that is not quasiminimal, and let $A$ have generic degree {\textbf a}. By definition of quasiminimality, fix $X_0$ noncomputable such that $\widetilde{\mathcal{R}}(\mathcal{R}(X_0))\leq_\gen A$. Because $X_0$ is noncomputable, $X_0$ must be either not left c.e.\ or not right c.e.. By symmetry, assume $X_0$ is not left c.e.. Let $B$ the set enumerated by $\phi^{X_0}$, and let {\textbf b} be the generic degree of $B$. By construction of $\phi$, $B$ is density-1. Furthermore ${\textbf b}\leq_\gen{\textbf a}$ because any generic oracle for $A$ can generically compute $\widetilde{\mathcal{R}}(\mathcal{R}(X_0))$, and this generic computation can uniformly be used to compute $X_0$, and hence to enumerate $B$. This enumeration is a generic computation of $B$ because it halts on density-1, and it is correct about $B$ wherever it halts. Finally, $B$ is not generically computable because it has no density-1 c.e.\ subset, and if there were a generic computation of $B$, then the set of $n$ where that computation halted and outputted a 1 would be a density-1 c.e.\ set. (Density-1 because it is the intersection of two density-1 sets, c.e.\ because halting and outputting a 1 is a $\Sigma_1$ condition.)\\

  We construct $\phi$ as follows. $\psi$ will be constructed
  symmetrically. For each $e$, we will have an $e$th strategy, which
  will act to ensure that if the $e$th c.e.\ set $W_e$ has density 1,
  then $W_e$ is not a subset of $\phi^X$ for any $X\in 2^\omega$. In
  doing so, there will be at most one $X=X_e\in 2^\omega$ such that
  the strategy prevents $\phi^{X_e}$ from enumerating a density-1 set,
  and that $X_e$ will be left c.e.. Lemma \ref{pilem} will then be used
  to ensure that the only reals $X$ such that $\phi^{X}$ is not
  density-1 are the $X_e$ from these strategies.

  We define $\phi$ using the subintervals $P_i$ from Definition
  \ref{pi}. At stage $s$, we simultaneously define $\phi^X$ on $P_s$
  for every $X\in 2^\omega$. Note that because $\phi^X$ enumerates its
  elements in increasing order, the set enumerated by $\phi^X$ is
  actually uniformly computable from $X$, not just uniformly
  generically computable from $X$, although this is not relevant for
  our purposes.

  At stage $s$, for each $e<s$, the $e$th strategy acts as
  follows. Consider the tree $T_{e,s}$ whose paths are the reals $X$
  such that the numbers less than $2^s$ enumerated by $\phi^X$ are a
  superset of the numbers less than $2^e$ enumerated by $W_e$ by stage
  $s$. The reals $X$ not on $T_{e,s}$ are the reals that have already
  ``beaten'' $W_e$, in that $W_e$ has enumerated an element that they
  will never enumerate, and so the $e$th strategy will never need to
  work with those $X$ again. If $T_{e,s}$ has no paths, then the $e$th
  strategy has accomplished its task, and so it does nothing.

  If $T_{e,s}$ has a path, then the $e$th strategy places a ``marker''
  $p_{e,s}$ on the shortest unmarked node of the leftmost path of
  $T_{e,s}$. Here, an unmarked node is a node such that the $e$th
  strategy has not yet placed a marker on that node, regardless of
  whether or not other strategies have marked that node. When it
  places the marker $p_{e,s}$ on the node $\sigma$, it declares that
  for every $X\succ\sigma$, $\phi^X$ must have a gap of size $2^{-e}$
  at $P_s$. Specifically, the strategy requires that the last
  $2^{s-e}$ elements of $P_s$ are not in $\phi^X$. The strategy makes
  no other requirements for $\phi^X$ if  $X\nsucc\sigma$.

  To define $\phi^X\upharpoonright P_s$ for any given $X$, we simply
  enumerate every element of $P_s$ that no strategy requires us to not
  enumerate.

  \vspace{5pt}

  We now prove that this construction has the desired properties.

  First we show that for every $e$, and for every $X\in2^\omega$,
  $W_e$ is not a density-1 subset of $\phi^X$.  Let $e$ be
  given. Either there is some $s$ such that $T_{e,s}$ has no paths, or
  for every $s$, $T_{e,s}$ has at least one path.

  In the first case we have that for every $X$, $W_e$ has enumerated a
  number not enumerated by $\phi^X$, and so for every $X$, $W_e$
  cannot possibly be a density-1 subset of the set enumerated by
  $\phi^X$.

  In the second case, let $T_e=\bigcap_s T_{e,s}$, and let $X_e$ be
  the leftmost path of $T_e$. Note that for every $\sigma\prec X_e$,
  there is some $s$ such that $\sigma$ has the marker $p_{e,s}$. This
  is because every $\tau$ to the left of that $\sigma$ eventually is
  removed from some $T_{e,s}$, and once all of those $\tau$ are
  removed, after at most $|\sigma|$ many more steps, $\sigma$ must
  have a marker placed on it by the $e$th strategy.

  All of these $\sigma$ are in $T_e$, and so, in particular, there are
  infinitely many nodes in $T_e$ with markers placed by the $e$th
  strategy. Therefore there are infinitely many $s$ for which $W_e$
  has a gap of size $2^{-e}$ at $P_s$, because if $W_e$ ever
  enumerates an element from the last $2^{s-e}$ many elements of
  $P_s$, then at that stage $s'$, every $X$ extending the node marked
  with $p_{e,s}$ will be removed from $T_{e,s'}$. Thus, each marker on
  a $\sigma\in T$ corresponds to a gap of size $2^{-e}$ in
  $W_e$. Therefore $W_e$ is not an enumeration of a density-1 set, and
  in particular, not an enumeration of a density-1 subset of the set
  enumerated by $\phi^X$ for any $X$.

  Next we show that if $X$ is not left c.e., then $\phi^X$ is an
  enumeration of a density-1 set.

  Each $X_e$, if it is defined, is left c.e.\ because it is the
  leftmost path of a $\Pi^0_1$ tree, and because for every $\Pi^0_1$
  tree, there is a computable tree having the same infinite paths. It
  remains to show that if $X$ is not equal to any of the $X_e$, then
  $\phi^X$ enumerates a density-1 set. To show this, we show that for
  every $e$, if $X_e$ is not defined, or if $X$ is not equal to $X_e$,
  then only finitely many markers are placed on initial segments of
  $X$ by strategy $e$. This will show that for every $e$, $\phi^X$ has
  finitely many gaps of size $2^{-e}$, and so by Lemma \ref{pilem},
  $\phi^X$ is density-1.

  So, fix $e$, assume $X_e$ is defined, and assume $X\neq X_e$. If $X$
  is to the left of $X_e$, then at some stage $s$, $X$ was removed
  from $T_{e,s}$, and after that stage, $X$ stopped receiving new
  markers from strategy $e$. No marker ever gets placed to the right
  of $X_e$ by strategy $e$, so if $X$ is to the right of $X_e$, then
  the only markers from strategy $e$ on $X$ are those placed on
  $\sigma$ that are initial segments of both $X$ and $X_e$, of which
  there are at most finitely many. If $X_e$ is not defined, it is
  because there is some stage at which strategy $e$ stops acting, and
  so strategy $e$ uses at most finitely many markers, so in
  particular, each $X$ can receive at most finitely many markers from
  strategy $e$.

  Thus, if $X$ is not equal to any of the $X_e$, then for every $e$
  only finitely many markers are placed on initial segments of $X$ by
  strategy $e$, so $\phi^X$ is density-1.

  To construct $\psi$, we do the same construction but using rightmost
  paths instead of leftmost paths.
\end{proof}

We now modify the proof to prove a stronger result.

\begin{prop}\label{newd1b}

  Assume $A$ is noncomputable. Then $\widetilde{\mathcal{R}}(A)$ is
  density-1-bounding in the generic degrees.

\end{prop}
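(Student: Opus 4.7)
The plan is to apply the functional $\phi$ from the proof of Proposition \ref{old} directly to $A$, set $B = \phi^A$, and verify $B \leq_\gen \widetilde{\mathcal{R}}(A)$. Since $A$ is noncomputable, it cannot be both left-c.e.\ and right-c.e.\ (otherwise it would be the unique path of a computable tree and hence computable). By symmetry, assume $A$ is not left-c.e., so that Proposition \ref{old}'s analysis of $\phi$ gives that $B = \phi^A$ is density-1 with no density-1 c.e.\ subset; in particular $B$ is density-1 and not generically computable. The symmetric case uses the companion functional $\psi$ in place of $\phi$.

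For the reduction, recall from the discussion after Theorem \ref{embeddings} that a generic oracle for $\widetilde{\mathcal{R}}(A)$ uniformly produces a cofinite oracle $(A)$ for $A$, so it suffices to build a time-independent functional $\Psi$ such that $\Psi^{(A)}$ is a generic computation of $B$ for every cofinite oracle $(A)$. Define $\Psi^{(A)}(n)=1$ iff there exists a finite sub-oracle $Y$ of $(A)$ such that every total $X \in 2^\omega$ extending $Y$ enumerates $n$ under $\phi^X$. Since $\phi^X(n)$ depends only on $X \upharpoonright M(n)$ for an effectively computable bound $M(n)$ coming from the marker lengths at the stage $s$ with $n \in P_s$, this forcing condition is effectively checkable, and $\Psi$ is a bona fide time-independent Turing functional. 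Correctness is immediate: if $\Psi^{(A)}(n)=1$, then $A$ itself extends the witness $Y$, so $\phi^A$ enumerates $n$ and $n \in B$.

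To see that $\dom(\Psi^{(A)})$ is density-1, let $F = \omega \setminus \dom((A))$, which is finite, and consider the $2^{|F|}$ total reals $A^*$ that agree with $(A)$ on its domain. Each such $A^*$ differs from $A$ on only finitely many bits, so each is not left-c.e.\ (left-c.e.-ness is preserved under finite modification), and hence by the Proposition \ref{old} analysis each $\phi^{A^*}$ enumerates a density-1 set. The symmetric difference of any two density-1 subsets of $\omega$ is density-0, so the set $G$ of positions at which all $2^{|F|}$ enumerations agree is density-1. For $n \in G \cap B$, the finite sub-oracle $Y = (A) \upharpoonright ([0,M(n)) \setminus F)$ agrees with $(A)$ and forces every extension to enumerate $n$, so $\Psi^{(A)}(n) = 1$; since $B$ and $G$ are both density-1, so is $B \cap G \subseteq \dom(\Psi^{(A)})$, completing the reduction.

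The main obstacle is bridging between the total oracle for $A$ that underlies the construction in Proposition \ref{old} and the merely cofinite oracle extracted from $\widetilde{\mathcal{R}}(A)$. The density-0 symmetric difference observation, combined with the stability of not-left-c.e.\ under finite modifications of $A$, is precisely what allows one to absorb the finitely many unknown bits of $A$ into a pointwise intersection over completions of the cofinite oracle without sacrificing density-1 control of the output of $\phi$.
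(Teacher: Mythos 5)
Your proof is correct and takes the same high-level route as the paper: pass from a generic oracle for $\widetilde{\mathcal{R}}(A)$ to a cofinite oracle $(A)$, run the $\phi$ from Proposition \ref{old}, and replace it with a time-independent version that outputs $1$ exactly when it is ``forced'' by $(A)$. Correctness of the output is verified the same way in both arguments (the true $A$ is among the completions, so a forced output must agree with $\phi^A$). Where you genuinely diverge is the density-1 verification of the domain. The paper works internally to the construction: it considers the set $S$ of initial segments of completions of $(A)$ and shows that each strategy $e$ places only finitely many markers in $S$, because $X_e$ is left c.e.\ while every completion of $(A)$ is mod-finitely equal to $A$ and hence not left c.e.; Lemma \ref{pilem} then finishes. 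You instead work externally: since $F=\omega\setminus\dom((A))$ is finite, there are only $2^{|F|}$ completions $A^*$, each still not left c.e.\ (this stability under finite change is correct and is also used implicitly in the paper), so each $\phi^{A^*}$ is density-1; you then intersect finitely many density-1 sets to get $G$, and use the explicit use bound $M(n)\approx\log_2 n$ --- the fact that $\phi^X\upharpoonright P_s$ depends only on $X\upharpoonright s$, which the paper notes but flags as ``not relevant for our purposes'' --- to show $B\cap G\subseteq\dom(\Psi^{(A)})$. This is a clean alternative. The trade-off is that your argument leans on both the finiteness of $F$ and the computable use bound; the paper's marker-tracking argument avoids both, which is exactly why it scales directly to Observations \ref{weird1} and \ref{weird2}, where $(A)$ is merely a generic oracle (so $F$ is infinite and there are uncountably many completions) and your finite-intersection trick would no longer be available.
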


Proposition \ref{old} says that embedded noncomputable Turing degrees
are density-1-bounding, while Proposition \ref{newd1b} says that
embedded noncomputable cofinite degrees are density-1-bounding. In the
next section, we will demonstrate examples of quasiminimal cofinite
degrees, which, in particular, embed into the generic degrees as
quasiminimal, and hence this proposition is strictly stronger than
Proposition \ref{old}.

\begin{proof}

  Assume $A$ is noncomputable, and further assume that $A$ is not left
  c.e.. Let $\phi$ be the functional $\phi$ from the proof of
  Proposition \ref{old}. Let $\tilde{\phi}$ be the time-independent
  functional defined from $\phi$ as follows.

  Given a partial oracle $(X)$, $\tilde\phi^{(X)}(n)\downarrow=1$ if
  and only if for every $Y$ such that $(X)$ is a partial oracle for
  $Y$, $\phi^Y(n)\downarrow=1$.

  (Note that this definition is very similar to the definition of
  $\psi$ as in Definition \ref{D:timeindependent}. The primary
  difference is that in this case, $\phi$ was created as a functional
  that uses ordinary Turing oracles, whereas in Definition
  \ref{D:timeindependent}, $\phi$ is a functional that uses partial
  oracles.)

  By compactness, this is a $\Sigma_1$ operation: if every $Y$
  agreeing with $(X)$ enumerates some $n$, then there is a finite
  stage at which we see this happen. Note also that the quantifier
  over $Y$ agreeing with $(X)$ does not interfere with the fact that
  $(X)$ enumerates its domain: As $(X)$ produces more answers, the set
  of $Y$ agreeing with $(X)$ becomes smaller, and so the set of $n$
  for which $\tilde\phi^{(X)}(n)\downarrow$ becomes larger, so we
  never need to ``take back'' any $n$ for which $\tilde\phi^{(X)}(n)$
  has halted.

  We now claim that if $(A)$ is a cofinite oracle for $A$ (recall that
  any generic oracle for $\widetilde{\mathcal{R}}(A)$ can uniformly
  produce a cofinite oracle for $A$), then $\tilde\phi^{(A)}$ is a
  generic computation of the set enumerated by $\phi^A$, which we have
  proved is density-1 and not generically computable. Because $A$ is
  one of the $Y$'s agreeing with $(A)$, we have that the domain of
  $\tilde\phi^{(A)}$ is a subset of the domain of $\phi^A$, so in
  particular, $\tilde\phi^{(A)}$ never makes any mistakes about the
  set it is computing. It remains to show that
  $\dom(\tilde\phi^{(A)})$ is density-1.

  To prove this, let $S$ be the set of initial segments of $Y$'s such
  that $(A)$ is a partial oracle for $Y$. We show that strategy $e$
  from the proof of Proposition \ref{old} places at most finitely many
  markers on elements of $S$. To show this, define $T_e$ as in the
  proof of Proposition \ref{old}. If $T_e$ is finite, then strategy
  $e$ places at most finitely many markers, and so it places at most
  finitely many markers on elements of $S$. If $T_e$ is infinite, then
  let $X_e$ be the leftmost path of $T_e$, and we claim that for every
  $\sigma\prec X_e$, strategy $e$ places at most finitely many markers
  on $\tau$ that are not extensions of $\sigma$. This is because
  everything to the left of $\sigma$ eventually gets removed from
  $T_e$, and everything to the right of $\sigma$ never gets a marker,
  and there are only finitely many things below $\sigma$.

  So it remains to show that there is some $\sigma\prec X_e$ such that
  $\sigma$ has no extensions in $S$. This follows from the fact that
  $A$ is not left c.e., $(A)$ is a cofinite oracle for $A$, and every
  $Y$ agreeing with $(A)$ is mod-finitely equal to $A$, and so also
  not left c.e.. Therefore there must be some $n$ such that
  $(A)(n)\downarrow\neq X_e(n)$, because $X_e$ is left c.e., and so
  $(A)$ is not a partial oracle for $X_e$.

  We therefore conclude that for every $e$, $S$ has only finitely many
  markers on it placed by strategy $e$, and thus that the domain of
  $\tilde\phi^{(A)}$ has at most finitely many gaps of size $2^{-e}$,
  because if the $e$th strategy places a marker $p_{e,s}$ on some
  $\sigma\notin S$, then for every $Y$ agreeing with $(A)$,
  $\sigma\nprec Y$, and so $\phi^Y$ does not have a gap of size
  $2^{-e}$ at $P_s$. (Or rather, the $e$th strategy does not cause it
  to have such a gap. The gaps of size $2^{-e}$ can be created by
  strategy $e'$ for any $e'<e$, but this proof shows that each one of
  those strategies causes at most finitely many gaps to appear in the
  domain of $\tilde\phi^{(A)}$.) Thus, by Lemma \ref{pilem}, the
  domain of $\tilde\phi^{(A)}$ is density-1.
\end{proof}

As a corollary to the proof of Proposition \ref{newd1b}, we make an
observation that will have a number of consequences.

\begin{obs}\label{weird1}

  Assume there is no left c.e.\ set $X$ such that $\{n:X(n)=A(n)\}$ is
  density-1. Then $A$ is density-1-bounding.

\end{obs}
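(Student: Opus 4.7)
The plan is to recycle the proof of Proposition \ref{newd1b}, substituting a generic oracle for $A$ in place of a cofinite oracle. Note first that the hypothesis implies $A$ is not left c.e.\ (take $X=A$), so by the proof of Proposition \ref{old}, the functional $\phi$ constructed there has the property that $\phi^A$ enumerates a density-1 set $B$ with no density-1 c.e.\ subset; in particular $B$ is not generically computable. Define $\tilde\phi$ exactly as in the proof of Proposition \ref{newd1b}: $\tilde\phi^{(X)}(n){\downarrow}=1$ iff for every real $Y$ with which $(X)$ is consistent, $\phi^Y(n){\downarrow}=1$. By compactness this is a $\Sigma_1$ operation on the partial oracle $(X)$. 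Let $(A)$ be an arbitrary generic oracle for $A$; since $A$ itself is among the $Y$'s consistent with $(A)$, we automatically have $\dom(\tilde\phi^{(A)})\subseteq \dom(\phi^A)=B$, so $\tilde\phi^{(A)}$ never makes a mistake about $B$.

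The main step is to show $\dom(\tilde\phi^{(A)})$ is density-1. Let $S$ be the set of finite binary strings $\sigma$ such that some $Y\succ\sigma$ is consistent with $(A)$. By Lemma \ref{pilem}, it suffices to show that for each $e$ only finitely many markers placed by strategy $e$ (from the construction of $\phi$) lie on elements of $S$. If the tree $T_e$ is finite, strategy $e$ places only finitely many markers at all. Otherwise let $X_e$ be the leftmost path of $T_e$, which is left c.e. By hypothesis, the set $D:=\{n : X_e(n)\neq A(n)\}$ is not density-0, hence meets the density-1 set $\dom((A))$; choose any $n\in D\cap\dom((A))$. Then $\sigma:=X_e\upharpoonright (n+1)$ has no extension in $S$, because any $Y\succ\sigma$ would differ from $(A)$ at $n$. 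As in the proof of Proposition \ref{newd1b}, only finitely many nodes in $S$ receive markers from strategy $e$: nodes to the left of $X_e$ are cut from $T_e$ after finitely many stages, nodes strictly to the right of $X_e$ never receive a marker from strategy $e$, and the only remaining candidates are the finitely many proper initial segments of $\sigma$.

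Combining the previous two points, $\tilde\phi^{(A)}$ is a partial computation of $B$ with density-1 domain, so $B\leq_\gen A$ via $\tilde\phi$. Because $B$ is density-1 and not generically computable, its generic degree is nonzero and witnesses that $A$ is density-1-bounding. The main obstacle is the cutting-off argument: if one only knew that $A$ is not left c.e., a generic oracle $(A)$ could remain consistent with some left c.e.\ $X_e$ at every queried bit, so every $\sigma\prec X_e$ would have extensions in $S$ and strategy $e$ could place markers all the way up $X_e$. The hypothesis that no left c.e.\ set agrees with $A$ on a density-1 set is exactly what forces each such $X_e$ to disagree with $A$ on a non-density-0 set, which the density-1 domain of $(A)$ must meet.
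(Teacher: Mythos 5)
Your proof is correct and is essentially the same argument as the paper's: both recycle $\tilde\phi$ from Proposition \ref{newd1b} and replace the ``mod-finite, hence left c.e.''\ contradiction there with a direct appeal to the hypothesis that no left c.e.\ set (in particular no $X_e$) agrees with $A$ on a density-1 set, which together with $\dom((A))$ being density-1 yields the needed disagreement point $n$ and the cutoff string $\sigma\prec X_e$.
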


\begin{proof}

  Define $\tilde\phi$ as above. If $A$ is a real, there is no left
  c.e.\ set $X$ such that $\{n:X(n)=A(n)\}$ is density-1, and $(A)$ is
  a generic oracle for $A$, then, in particular, for every $e$, $X_e$
  is not one of the reals $Y$ that agree with $(A)$. In particular, we
  therefore again have that there is some $\sigma\prec X_e$ such that
  no $Y\succ\sigma$ agrees with $(A)$. After some finite stage, all
  markers placed by strategy $e$ are placed on extensions of $\sigma$,
  and so therefore, there are at most finitely many markers placed on
  initial segments of $Y$'s that agree with $(A)$. This again proves
  that for every $e$, there are at most finitely many gaps of size
  $2^{-e}$ in the set enumerated by $\tilde\phi^{(A)}$.
\end{proof}

This observation can be modified by a technique from a previous paper
of the second author (Lemma 2.6 from \cite{I1}) which says that ``left
c.e.'' can be replaced by any property that can by realized by a
uniformly in $0'$ effective basis theorem.

\begin{obs}\label{weird2}

  Let $\mathcal{F}$ be any function from reals to reals such that for
  any $\Pi^0_1$ tree $T$, if $T$ is infinite, then $\mathcal{F}(T)$ is
  an infinite path through $T$ that is uniformly computable in $0'$
  together with a $\Pi^0_1$ index for $T$.

  Assume there is no $X$ in the range of $\mathcal{F}$ such that
  $\{n:X(n)=A(n)\}$ is density-1. Then $A$ is density-1-bounding.

\end{obs}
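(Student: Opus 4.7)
The plan is to adapt the construction of $\phi$ from the proof of Proposition \ref{old} (which was reused in Observation \ref{weird1}) by replacing ``leftmost path of $T_{e,s}$'' with a computable approximation to $\mathcal{F}(T_e)$. Since $\mathcal{F}(T_e)$ is uniformly $0'$-computable from a $\Pi^0_1$ index for $T_e$, Shoenfield's limit lemma gives a computable function $f_e(s,n)$ with $\lim_s f_e(s,n)=\mathcal{F}(T_e)(n)$ for every $n$ (whenever $T_e$ is infinite). The modified $e$th strategy places its stage-$s$ marker on the shortest unmarked initial segment of the current approximation $f_e(s,\cdot)$; everything else in the construction of $\phi$ stays the same, so $\phi$ remains an ordinary Turing functional.

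I would then verify the two properties established in Proposition \ref{old}. For the diagonalization property, whenever $T_e$ is infinite the approximation stabilizes at each bit of $\mathcal{F}(T_e)$, so every initial segment of $\mathcal{F}(T_e)$ is eventually marked; since these segments all lie in $T_e$, the argument from Proposition \ref{old} converts them into infinitely many gaps of size $2^{-e}$ that $W_e$ must also exhibit, so $W_e$ is not density-1. For the density-1 property, fix $X\neq\mathcal{F}(T_e)$ and let $n$ be the first position where they differ: by the limit lemma, past some stage $s_0$ the approximation $f_e(s,\cdot)$ agrees with $\mathcal{F}(T_e)$ through bit $n$, and hence no initial segment of $X$ of length greater than $n$ is ever again an initial segment of $f_e(s,\cdot)$. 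Thus strategy $e$ marks only finitely many initial segments of $X$, so by Lemma \ref{pilem}, $\phi^X$ enumerates a density-1 set whenever $X$ is not one of the $\mathcal{F}(T_e)$.

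The concluding step mirrors Observation \ref{weird1}. Let $\tilde\phi$ be the time-independent operator built from $\phi$ as in Proposition \ref{newd1b}, and let $(A)$ be a generic oracle for $A$. Any real $Y$ agreeing with $(A)$ agrees with $A$ on $\dom((A))$, a density-1 set, and so agrees with $A$ on density-1. By hypothesis no element of the range of $\mathcal{F}$ agrees with $A$ on density-1, so for each $e$ the real $\mathcal{F}(T_e)$ fails to agree with $(A)$; explicitly, there is some $n\in\dom((A))$ with $(A)(n)\neq\mathcal{F}(T_e)(n)$, giving a $\sigma\prec\mathcal{F}(T_e)$ with no extension agreeing with $(A)$. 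As in Observation \ref{weird1}, strategy $e$ then places only finitely many markers on initial segments of $Y$'s that agree with $(A)$, and Lemma \ref{pilem} shows $\dom(\tilde\phi^{(A)})$ is density-1; since $A$ itself is among the $Y$'s, Observation \ref{notwrong} guarantees that $\tilde\phi^{(A)}$ makes no mistakes, so it generically computes a density-1, non-generically-computable set that is generically reducible to $A$.

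The main obstacle I expect is checking the density-1 property in the modified construction, because the constantly-changing approximation $f_e(s,\cdot)$ could in principle leave markers scattered on many different reals. The limit-lemma bound sketched above is the only essentially new ingredient; everything else is a direct translation of the proof of Observation \ref{weird1}, with $\mathcal{F}(T_e)$ playing the role that the leftmost path $X_e$ played there.
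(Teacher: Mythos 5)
Your proposal is correct and follows the same approach as the paper's (sketch) proof: replace the leftmost-path markers with markers on the stage-$s$ $\Delta^0_2$ (limit-lemma) approximation to $\mathcal{F}(T_e)$, and use convergence of the approximation to conclude that for every $\sigma\prec\mathcal{F}(T_e)$, eventually all of strategy $e$'s markers land on extensions of $\sigma$, which suffices for both the Proposition~\ref{old} verification and the Observation~\ref{weird1} verification. Two small points worth tidying: a $\Pi^0_1$ index for $T_e$ must be obtained via the recursion theorem (since $T_e$ depends on the index of the $\phi$ being constructed), and the fact that $\tilde\phi^{(A)}$ makes no mistakes follows as in Proposition~\ref{newd1b} because $A$ is itself among the $Y$'s agreeing with $(A)$ --- citing Observation~\ref{notwrong} there is slightly circular, since that observation presupposes the reduction rather than establishing it.
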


In particular, using the Low Basis Theorem, this says that if $A$ is
not density-1-bounding, then $A$ agrees with a low set on a set of
density 1. Likewise, using the cone avoidance basis theorem, this says
that if $A$ is not density-1-bounding, then for any noncomputable
$\Delta^0_2$ $B$, there is a $\Delta^0_2$ set $X$ such that
$X\ngeq_TB$ and $A$ agrees with $X$ on a set of density-1, etc. In
essence, Observation \ref{weird2} is an observation schema across $0'$
computable basis theorems.

\begin{proof}[Proof (sketch).]

  In the proof of Proposition \ref{old}, wherever strategy $e$ would
  normally place a marker on the shortest unmarked node of $T_{e,s}$,
  have it instead place a marker on the shortest unmarked node of the
  stage-$s$ $\Delta^0_2$ approximation to $\mathcal{F}(T_e)$. (A
  $\Pi^0_1$ index for $T_e$ can be obtained using the recursion
  theorem.) Let $X_e=\mathcal{F}(T_e)$. Because $\Delta^0_2$
  approximations eventually converge, for every $\sigma\prec X_e$, we
  have that eventually all markers are placed on extensions of
  $\sigma$, which is sufficient for the verification for Proposition
  \ref{old} and also for Observation \ref{weird1}.
\end{proof}

\noindent{\bf Remark.}
If the proof of Proposition \ref{old} could be modified somehow to
make each $X_e$ computable, then Observation \ref{weird1} would
probably be able to be modified to say that if there is no computable
set $X$ such that $\{n:X(n)=A(n)\}$ is density-1 then $A$ is
density-1-bounding. Note that Observation \ref{coarseisd1} implies
that if $A$ is coarsely computable, then $A$ is in the same generic
degree as a density-1 set, and so in particular is density-1-bounding.
Combining these two would prove that if $A$ is not generically
computable, then $A$ is density-1-bounding, solving Question \ref{q1}.

Simplifying the analysis above leads to a question that has been open
since the writing of \cite{I1}, but that now appears to be
sufficiently motivated to be worth stating as an open problem.

\begin{question}\label{q2}
  Is there a uniform proof of Proposition \ref{old}?
\end{question}

This question asks whether there is a single $\phi$ such that for
every $A$, if $A$ is not computable, then $\phi^A$ is an enumeration
of a density-1 set with no density-1 c.e.\ subset. A positive solution
would not necessarily answer Question \ref{q1} as well, because the
solution might not admit the modification required for Observation
\ref{weird1}, but the question is elegant and simple enough that it
might merit study in its own right.

We conclude the section with a proof of a result that effectively says
that the observations stated here are not already sufficient to prove
that every nonzero degree is density-1-bounding in the generic
degrees.

\begin{prop}[Dzhafarov, Igusa]\label{wiggletree}

  There exists a real $A$ such that for every $\mathcal{F}$ as in
  Observation \ref{weird2}, there is an $X$ in the range of
  $\mathcal{F}$ such that $\{n:X(n)=A(n)\}$ is density-1, but such
  that $A$ is neither coarsely computable nor generically computable.

\end{prop}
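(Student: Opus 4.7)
The plan is to construct $A$ as a path through a single $\Pi^0_1$ tree $T$ designed so that all infinite paths of $T$ are pairwise density-$1$ equal. Given such $T$ with $A \in [T]$, for any $\mathcal{F}$ as in Observation~\ref{weird2}, the real $X = \mathcal{F}(T)$ is a path of $T$ and hence is density-$1$ equal to $A$, which is the required witness. It then remains to ensure that $A$ is neither coarsely nor generically computable.

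The construction goes as follows. Fix a computable set $B \subseteq \omega$ with $|B \cap P_i| \to \infty$ but $|B \cap P_i|/2^i \to 0$, for instance $|B \cap P_i| = i$; by Lemma~\ref{pilem}, $\omega \setminus B$ has density $1$. We build a $\Delta^0_2$ function $c \colon \omega \setminus B \to \{0,1\}$ in stages, and simultaneously enumerate $T$ by declaring a string $\sigma$ forbidden as soon as some $n < |\sigma|$ with $n \notin B$ has had $c(n)$ decided to a value differing from $\sigma(n)$. Then $T$ is $\Pi^0_1$; once $c$ is totally defined, the infinite paths of $T$ are exactly the reals that agree with $c$ on $\omega \setminus B$ and are arbitrary on $B$. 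Any two such paths differ only on the density-$0$ set $B$ and hence are density-$1$ equal. Setting $A(n) = c(n)$ for $n \notin B$ and $A(n) = 0$ for $n \in B$ yields a specific path of $T$.

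The values of $c$ are decided by a priority argument meeting two kinds of requirements. For each total computable $\phi_e$, a coarse requirement $R_e^c$ reserves a pairwise disjoint family of intervals of total density $2^{-e-1}$ and sets $c(n) = 1 - \phi_e(n)$ for every $n \notin B$ lying in these intervals; this forces $\{n : c(n) \neq \phi_e(n)\} \cap (\omega \setminus B)$ to have density at least $2^{-e-1}$, so $A$ fails to be density-$1$ equal to $\phi_e$, and hence is not coarsely computable. For each partial computable $\psi_e$, a generic requirement $R_e^g$ waits for some $n \in \dom(\psi_e) \setminus B$ lying outside all intervals reserved for higher-priority coarse requirements and not yet committed, and sets $c(n) \neq \psi_e(n)$; since the total reserved density is at most $1/2$ and $B$ has density $0$, such an $n$ must eventually appear whenever $\dom(\psi_e)$ is density-$1$, and this guarantees $A$ is not generically computable via $\psi_e$. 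Any positions of $c$ untouched by any requirement are eventually assigned a default value of $0$ so that $c$ is totally defined in the limit.

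The main obstacle will be orchestrating the priority argument so that the positive-density reservations for the coarse requirements coexist with the pointwise diagonalizations for the generic requirements. Bounding each $R_e^c$ to density $2^{-e-1}$ keeps the total reserved density under $1/2$, leaving a set of positive density from which each $R_e^g$ can draw a fresh diagonalization point, and standard finite-injury bookkeeping then suffices. The remaining verifications are routine: $T$ is $\Pi^0_1$ and infinite since branching on the infinite set $B$ is never restricted, all paths of $T$ are pairwise density-$1$ equal by Lemma~\ref{pilem}, and $A$ has the required failure properties by the $R_e^c$ and $R_e^g$.
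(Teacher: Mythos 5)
There is a structural obstruction here that no amount of priority bookkeeping can repair. In your construction the tree $T$ has a \emph{fixed} set $B$ of free coordinates, and $[T]$ is exactly the set of reals that agree with the single (partial) function $c$ at every $n\in\omega\setminus B$ and are arbitrary on $B$. But $[T]$ is a $\Pi^0_1$ class; its projection onto the $\omega\setminus B$ coordinates is then a $\Pi^0_1$ subset of $2^{\omega\setminus B}$, and by your description of $[T]$ that projection is the singleton $\{c\}$. A $\Pi^0_1$ singleton is computable, so $c$ is computable as a function on $\omega\setminus B$ (and if you allow $c$ to be partial, it is partial computable with density-$1$ domain). You can also see this directly from your forbidding rule: in a $\Pi^0_1$ tree, forbidding is irrevocable, so a declaration ``$c(n)=v$'' can never be retracted; hence the graph of $c$ is c.e., and a total $\{0,1\}$-valued function with c.e.\ graph is computable. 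Either way, every path of $T$ — in particular $A$ — agrees with the (partial) computable $c$ on a density-$1$ set, so $A$ is generically and coarsely computable, which is precisely what you set out to avoid. Concretely, the requirements $R^g_e$ cannot all be met: if $\psi_e$ is the partial computable function that waits for $c(n)$ to be committed and then outputs it, then every $n\in\dom(\psi_e)$ is already committed when $\psi_e(n)$ halts, so $R^g_e$ never finds a fresh uncommitted $n\in\dom(\psi_e)\setminus B$ and never acts.

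The paper's construction evades exactly this trap by making the free positions \emph{move with the path} rather than lie in a fixed set. In the auxiliary tree $\widetilde{T}$, each node $\sigma$ at the start of a block $[2^n,2^{n+1})$ selects its own free coordinate $m_\sigma$ inside the block and forces the value $1$ everywhere else in that block, with distinct nodes of the same length selecting distinct $m_\sigma$. Consequently, for \emph{every} coordinate $m$ there are paths of $\widetilde{T}$ taking both values at $m$ — so no partial computable predictor can be correct everywhere it halts — yet any two paths still agree on density~$1$, since once they diverge they differ on at most one bit per block. This ``moving free bit'' structure is the ingredient missing from your fixed set $B$. The coarse requirements are then met by an entirely different device (a $\Pi^0_1$ class of reals all generically but not coarsely computed by one fixed functional), and the paper merges the two families of trees by interleaving coordinates; indeed the paper explicitly points out afterward (in the discussion around Proposition~\ref{notenough}) that its coarse and generic requirements are met by independent mechanisms and poses as an open problem whether a single unified tree can do both.
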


Note that by Observation \ref{weird2} and Lemma \ref{coarseisd1}, if
there existed a $A$ that was not density-1-bounding, then such a $A$
would necessarily need to be a witness to Proposition
\ref{wiggletree}.

This construction is a slight modification of a construction by
Dzhafarov, Igusa, and Westrick [personal communication] in which they
proved Proposition \ref{wiggletree} without the additional requirement
that $A$ not be generically computable. The previous construction
could only build generically computable $A$ although it contained most
of the ideas necessary to write this proof.

\begin{proof}
  We build an infinite computable tree $T\subseteq 2^{<\omega}$ such
  that given any two paths in $[T]$, the two paths agree on density 1,
  and such that no path in $[T]$ is is either coarsely computable or
  generically computable. Any path in $[T]$, can then be used as our
  $A$. This is because, given any $\mathcal{F}$, there must be a path
  in $[T]$ that is in the range of $\mathcal{F}$, and by construction
  of $T$, that path must agree with $A$ on density 1. Furthermore, by
  construction of $T$, $A$ is neither coarsely computable nor
  generically computable.

  The construction is as follows. We have requirements:
 
  $\mathcal{C}_i$: $\phi_i$ does not coarsely compute a path through
  $T$.
 
  $\mathcal{G}_i$: $\phi_i$ does not generically compute a path
  through $T$.


  We first remark that if each individual requirement can be met
  uniformly, then we can combine those trees to produce $T$. More
  formally:

  \vspace{5pt}

\noindent 
\emph{Claim.}  Assume there is a computable function $f$ such that
$\phi_{f(2i)}$ computes an infinite tree $T_{2i}\subseteq 2^{<\omega}$
all of whose paths agree on density 1 such that $\phi_i$ does not
coarsely compute a path through $T_{2i}$, and such that
$\phi_{f(2i+1)}$ computes an infinite tree
$T_{2i+1}\subseteq 2^{<\omega}$ all of whose paths agree on density 1
such that $\phi_i$ does not generically compute a path through
$T_{2i+1}$. Then there is an infinite computable tree
$T\subseteq 2^{<\omega}$ all of whose paths agree on density 1 such
that no path is either generically computable or coarsely computable.

\vspace{5pt}

\noindent 
\emph{Proof of Claim.}  Given $X\in 2^\omega$, let
$X_k=\{n:(2n+1)2^k\in X\}$. Note that if $X$ is generically
computable, then each $X_k$ is generically computable, and if $X$ is
coarsely computable, then each $X_k$ is coarsely
computable. 

For each $k$, let $T_k$ be as in the statement of the claim. Define
$T$ by $X\in[T]$ iff for each $k$, $X_k\in [T_k]$. (Note that because
the $T_k$ are uniformly computable, this is a $\Pi^0_1$ class, and so
in particular, there is a $T$ whose paths are the reals such that
$\forall kX_k\in T_k$.) We then claim that if $X\in [T]$, then $X$ is
neither coarsely nor generically computable.

To see this, assume otherwise. Assume $X\in[T]$ and $\psi$ is a Turing
functional that coarsely computes $X$ (the proof for generic
computation will be nearly identical). Define $g$ so that
$\phi_{g(i)}(n)=\psi((2n+1)2^{2i})$. Note then that if $\psi$ is a
coarse computation of $X$, then $\phi_{g(i)}$ is a coarse
computation of $X_{2i}$. By the recursion theorem, there exists an $i$
such that $\phi_{g(i)}=\phi_i$, providing a contradiction, because
$X_i$ was specifically constructed to not be coarsely computable via
$\phi_i$.

It remains to show that any two paths in $[T]$ must agree on
density-1. Let $X,Y\in [T]$, let $\epsilon>0$. Let $l$ be such that
$2^{-l}<\frac\epsilon2$. For each $k<l$, let $n_k$ be such that for
$m>n_k$, $X_k\upharpoonright m$ and $Y_k\upharpoonright m$ agree on
$m(1-\frac\epsilon2)$ many bits. Let $n=\max_{k<l}(n_k2^k)$. It is
straightforward to check that for $m>n$, $X\upharpoonright m$ and
$Y\upharpoonright m$ agree on $m(1-\epsilon)$ many bits.

This concludes the proof of the claim. To complete the proof of the
theorem, it now remains to construct the $T_k$ uniformly.

\vspace{5pt}

\noindent
\emph{Meeting requirement $\mathcal{C}_i$:} We meet all of these
requirements uniformly with a single tree. Let $X_0$ be any real that
is generically computable but not coarsely computable (exists by
\cite{JS}).

Let $\phi$ be the Turing functional that generically computes
$X_0$. For every $i$, let $T_{2i}$ be the tree of all $X$ such that
$\phi$ is a generic computation of $X$. Note that all such $X$ agree
on density 1 (because they agree on $\dom(\phi)$), and are not
coarsely computable (because a coarse computation of one of them would
be a coarse computation of every one of them, and $X_0$ is not
coarsely computable).








\vspace{5pt}

\noindent
\emph{Meeting requirement $\mathcal{G}_i$:} We construct a tree
$T_{2i+1}\subseteq 2^{<\omega}$ all of whose paths agree on density 1
such that $\phi_i$ does not generically compute a path through
$T_{2i+1}$.

We construct a computable tree $\widetilde{T}\subseteq 2^{<\omega}$
such that any two paths in $[\widetilde{T}]$ agree on density 1 but
such that, for any $n\in\omega$, there is at least one
$X\in[\widetilde{T}]$ with $X(n)=0$ and at least one
$X\in[\widetilde{T}]$ with $X(n)=1$. If we ever see $\phi_i(n)$ halt
for any value of $n$, then we let $n_i$ be the first value of $n$ for
which $\phi_i(n)\downarrow$, and $s_i> n_i$ be the number of stages
required to see that $\phi_i(n_i)\downarrow$.

We then
define:
$$T_{2i+1}=\{\sigma\in\widetilde{T}:|\sigma|<s_i\}\cup\{\sigma\in\widetilde{T}:|\sigma|\geq
s_i\wedge\sigma(n_i)\neq\phi_i(n_i)\}.$$
Note, in particular, that $T_{2i+1}$ will be defined uniformly in $i$,
and that if $\phi_i$ never halts (or if $\phi_i(n_i)\notin\{0,1\}$)
then $T_{2i+1}=\widetilde{T}$.

\vspace{5pt}

To ensure that all paths in $[\widetilde{T}]$ agree on density 1, we
ensure that every path in $[\widetilde{T}]$ is density-1 (as a subset
of $\omega$), and so any two paths agree on a set containing their
intersection, which is a density-1 set.

We put the empty string into $\widetilde{T}$.

When we define
$\widetilde{T}\upharpoonright\{\sigma:2^{n}\leq|\sigma|<2^{n+1}\}$ we
will have that there exist exactly $2^{n}$-many
$\sigma\in\widetilde{T}$ with $|\sigma|=2^n-1$, and we will ensure
that there exist exactly $2^{n+1}$-many $\sigma\in\widetilde{T}$ with
$|\sigma|=2^{n+1}-1$.

To define
$\widetilde{T}\upharpoonright\{\sigma:2^{n}\leq|\sigma|<2^{n+1}\}$,
each $\sigma\in\widetilde{T}$ with $|\sigma|=2^n-1$ selects one
element $m_\sigma$ of $[2^n,2^{n+1})$, and extends itself so that the
extensions of $\sigma$ of length $2^{n+1}-1$ are precisely the two
$\tau$ of that length such that $\tau\succ\sigma$ and $\tau(m)=1$ if
($m\in[2^n,2^{n+1})$ and $m\neq m_\sigma$).

If each $\sigma$ selects a different $m_\sigma$, then every
$m\in[2^n,2^{n+1})$ will be selected, because
$|[2^n,2^{n+1})|=2^n$. This is easy to arrange.

At the end of the construction, for each $X\in\widetilde{T}$, for each
$n\in\omega$, there will be at most one $m\in[2^n,2^{n+1})$ such that
$m\notin X$, and so $X$ will be density-1. Also, every $\sigma$ in
$\widetilde{T}$ extends to an $X\in[\widetilde{T}]$, and so, in
particular, for every $m$, there is an $X\in[\widetilde{T}]$ such that
$X(m)=0$ and an $X\in[\widetilde{T}]$ such that $X(m)=1$.
\end{proof}

Given the results of this section, the paths through this tree $T$ are
potential candidates for sets that are not density-1-bounding in the
generic degrees. However, we now show that all of the paths through
$T$ are density-1-bounding.

\begin{prop}\label{notenough}
  Let $T$ be the tree constructed in Proposition
  \ref{wiggletree}. Every $X\in[T]$ is density-1-bounding in the
  generic degrees.

\end{prop}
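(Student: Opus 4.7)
My plan is to build from $X$ a single density-$1$ set $B\leq_{T}X$ whose non-generic-computability is forced by the diagonalization already present in the trees $T_{2i+1}$, via a recursion-theorem argument at the ``outer'' level that mirrors the one used inside the proof of the claim in Proposition \ref{wiggletree}. The starting observation is that each odd slice $X_{2i+1}=\{n:(2n+1)2^{2i+1}\in X\}$ is a path through $T_{2i+1}\subseteq\widetilde{T}$, hence a density-$1$ subset of $\omega$ with at most one $0$ per dyadic block $[2^{n},2^{n+1})$. Individual $X_{2i+1}$ might still be generically computable, so the idea is to bundle all of them into a single set.

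Concretely, I define $B(0)=1$ and, for $p>0$ with unique decomposition $p=2^{i}(2m+1)$, set $B(p)=X_{2i+1}(m)$. Then $B\leq_{T}X$ by construction, hence $B\leq_{\gen}X$. A direct counting argument using at most one $0$ per dyadic block in each $X_{2i+1}$ shows that $B$ has only $O((\log N)^{2})$ many $0$'s below $N$, so $B$ is density-$1$.

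To show $B$ is not generically computable, suppose for contradiction that $\phi_{j}$ is a generic computation of $B$. By the s-m-n theorem there is a total computable function $e$ with $\phi_{e(i,j)}(m)=\phi_{j}(2^{i}(2m+1))$. A short calculation, using that the preimage of a density-$0$ set under $m\mapsto 2^{i}(2m+1)$ has density $0$ in $\omega$ for each fixed $i$, shows that each $\phi_{e(i,j)}$ has density-$1$ domain and agrees with $X_{2i+1}$ on it, so $\phi_{e(i,j)}$ generically computes $X_{2i+1}$. Applying the recursion theorem to the total computable function $i\mapsto e(i,j)$ then produces an $i^{*}$ with $\phi_{i^{*}}=\phi_{e(i^{*},j)}$, so $\phi_{i^{*}}$ generically computes the path $X_{2i^{*}+1}\in[T_{2i^{*}+1}]$, contradicting the defining property of $T_{2i^{*}+1}$.

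The main obstacle I anticipate is arranging the interleaving so that two properties hold simultaneously: the density-$1$ property of $B$ must survive the contributions from arbitrarily large $i$, and a single generic computation of $B$ must uniformly unpack into generic computations of every $X_{2i+1}$ via a computable sequence of indices. Both are handled cleanly by the dyadic decomposition $p=2^{i}(2m+1)$, which disjointly covers $\omega\setminus\{0\}$ and behaves well with respect to asymptotic density, so that the recursion theorem can then ``close the diagonal'' back against the original construction of the trees $T_{2i+1}$.
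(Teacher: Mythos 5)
Your construction of $B$ is exactly the paper's $Y$: the paper sets $Y$ so that $Y((2m+1)2^i) = X_{2i+1}(m)$, which is the same set as your $B$ modulo the value at $0$, and your density bound and the recursion-theorem argument for non-generic-computability both match what the paper does (the paper points back to the recursion-theorem argument used in the proof of the claim inside Proposition~\ref{wiggletree}). So the overall strategy is the intended one.

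There is, however, one genuine gap: the step ``$B \leq_T X$ by construction, hence $B \leq_{\gen} X$.'' Turing reducibility does \emph{not} imply generic reducibility; a generic oracle for $X$ carries strictly less information than $X$ itself, and there is no general way to upgrade a Turing reduction to a generic one. The paper itself supplies counterexamples: if $X$ is $1$-generic or $1$-random, then $X$ is quasiminimal in the generic degrees (Propositions~\ref{1g} and~\ref{1r}), so $\widetilde{\mathcal{R}}(\mathcal{R}(X)) \equiv_T X$ while $\widetilde{\mathcal{R}}(\mathcal{R}(X)) \nleq_{\gen} X$. What you actually need is that the coding map underlying your $B$ is density-respecting: given a generic oracle $(X)$ for $X$, output $B((2m+1)2^i) = (X)((2m+1)2^{2i+1})$ whenever the right-hand query converges, and verify that the domain of this partial computation is density~$1$. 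For a single fixed $i$ this is precisely the density-$0$-preservation fact you invoke in the recursion-theorem step, but here the estimate must be made uniform across all $i$ simultaneously: if the complement of $\dom((X))$ has upper density at most $\delta$, the holes in the resulting computation of $B$ have upper density $O(\sqrt{\delta})$ (split the sum over $i$ near $i_0 \approx \tfrac12\log_2(1/\delta)$, bounding the $i$-th slice's contribution by $\delta N 2^{i+1}$ for $i \le i_0$ and by $N/2^{i+1}$ for $i > i_0$). Letting $\delta \to 0$ gives density $1$. This is the content of the paper's terse line ``$Y\leq_\gen X$, because a generic oracle for $X$ must contain density-$1$ many of the bits of $X_k$ for every $k$,'' and it is not a consequence of $B \leq_T X$.
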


\begin{proof}
  Let $T$ be the tree constructed in Proposition \ref{wiggletree}, and
  for each $k\in\omega$, let $T_k$ be constructed as in the proof of
  Proposition \ref{wiggletree}.

  Let $X\in[T]$. Let $X_k=\{n:(2n+1)2^k\in X\}$, and note that by
  definition of $T$, we have that $X_k\in [T_k]$.

  Define $Y$ so that $Y_i=X_{2i+1}$. More formally, so that
  $(2n+1)2^i\in Y\leftrightarrow n\in X_{2i+1}$. Note that $Y$ is not
  generically computable by the same argument as why $X$ is neither
  generically nor coarsely computable. Note also that $Y$ is
  density-1, because for every $i$, every path through $T_{2i+1}$ is
  density-1. Finally, $Y\leq_\gen X$, because a generic oracle for $X$
  must contain density-1 many of the bits of $X_k$ for every $k$.
\end{proof}

The reason we are able to prove Proposition \ref{notenough} is that
the non-coarse computability requirements and non-generic
computability requirements are addressed independently in the proof of
Proposition \ref{wiggletree}. If there were a way of meeting both
sorts of requirements simultaneously, perhaps by meeting general
``non-dense-computability'' requirements, then this might shed
additional light on Question \ref{q1}.

\begin{question}
  If we define a real $A$ to be ``densely computable" if there is a
  partial computable $\phi$ such that $\{n:\phi(n)=A(n)\}$ is density
  1, then does there exist an infinite computable tree
  $T\subseteq 2^{<\omega}$ such that given any two paths in $[T]$, the
  two paths agree on density 1, and such that no path in $[T]$ is
  densely computable?

\end{question}

\section{Randoms and Generics}

In this section, we investigate the generic degrees of random reals,
and of generic reals. We show that both randomness and genericity
imply quasiminimality in the cofinite degrees, and therefore (by Lemma
\ref{qmpropagation}) in the mod-finite, generic, and coarse
degrees. In particular, this provides examples of quasiminimal sets
that are not density-1, potentially helping along the way to a
construction of a set that is not density-1-bounding. We also show
that both randomness and genericity imply density-1-bounding in the
generic degrees, potentially helping along the way to a proof that all
non-generically-computable sets are density-1-bounding.

\begin{prop}

  If $A$ is a weakly 1-random real, then $A$ is density-1-bounding in
  the generic degrees.
\end{prop}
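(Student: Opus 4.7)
The plan is to apply Observation~\ref{weird1}: it suffices to show that no left c.e.\ set $X$ satisfies that $\{n : A(n)=X(n)\}$ is density-1. I argue the contrapositive: if some left c.e.\ $X$ agrees with $A$ on a density-1 set, I will construct a $\Pi^0_1$ class of measure zero containing $A$, contradicting the weak 1-randomness of $A$.

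As a warm-up, consider the case where $X$ is computable. Since $A$ agrees with $X$ on density 1, there exists $N$ with $|\{k<m : A(k)\neq X(k)\}|\leq m/4$ for all $m\geq N$. The class
$$P^N = \{B\in 2^\omega : \forall m\geq N,\ |\{k<m : B(k)\neq X(k)\}|\leq m/4\}$$
is $\Pi^0_1$, contains $A$, and is null by a Chernoff-type large deviations estimate (a random $B$ disagrees with $X$ on density $1/2$). This already contradicts weak 1-randomness.

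For non-computable left c.e.\ $X$, fix a computable, lex-nondecreasing approximation $X_s\to X$ and work with the approximations rather than $X$ itself. Since $A$ agrees density-1 with $X$ and $X_s\to X$ pointwise, for each sufficiently large $m$ there exists $s$ with $|\{k<m : A(k)\neq X_s(k)\}|\leq m/4$ (indeed every $s$ with $X_s\upharpoonright m = X\upharpoonright m$ works). One aims to bundle these into a single $\Pi^0_1$ class of the form
$$P = \{B : \forall m\geq N,\ \exists s\leq g(m),\ |\{k<m : B(k)\neq X_s(k)\}|\leq m/4\}$$
for a sub-exponential computable $g$; a union bound over $s\leq g(m)$ together with the Chernoff estimate on each slice then keeps $P$ null.

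The main obstacle is that, for non-computable $X$, the stabilization stage $s_m$ at which $X_s\upharpoonright m = X\upharpoonright m$ is only $\emptyset'$-computable, so no fixed computable $g$ uniformly dominates it. The remedy exploits that we only need the single fixed $A$ (and the fixed $X$) to lie in $P$: one may relax the error budget by an additive term, pass to a carefully chosen subsequence of lengths $m_0<m_1<\cdots$, or iterate over rational thresholds $\varepsilon_j\to 0$ and build one $\Pi^0_1$ null class per $\varepsilon_j$ (the weak 1-random $A$ must avoid each, and density-1 agreement forces $A$ into at least one). Calibrating this construction --- producing an honest $\Pi^0_1$ null class trapping $A$ in the presence of non-computable stabilization of the left c.e.\ approximation --- is the technical heart of the proof.
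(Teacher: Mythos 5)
Your plan is to apply Observation~\ref{weird1}, which requires showing that no left c.e.\ set agrees with $A$ on a density-$1$ set. This route is blocked at the outset: there exist $1$-random (hence weakly $1$-random) reals that \emph{are} left c.e., the canonical example being Chaitin's $\Omega$, which is the leftmost path of a computable tree. Such a real agrees with itself, a left c.e.\ set, on all of $\omega$. So the hypothesis of Observation~\ref{weird1} simply fails for some weakly $1$-random $A$, and the contrapositive you set out to prove (``weakly $1$-random $\Rightarrow$ no left c.e.\ set agrees with $A$ on density $1$'') is false. Observation~\ref{weird1} gives a sufficient condition, not a characterization, and weak $1$-randomness does not imply it.

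Separately, even setting that counterexample aside, your own write-up concedes that the non-computable left c.e.\ case is unresolved: you name the obstruction (the stabilization modulus $s_m$ is only $\emptyset'$-computable, so no computable $g$ dominates it) and then list several candidate remedies without carrying any of them out, explicitly deferring ``the technical heart of the proof.'' As written this is a gap, not a proof.

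The paper avoids the issue entirely by \emph{constructing} the witnessing density-$1$ set directly rather than trying to invoke Observation~\ref{weird1}. It sets $B=\{\sigma\in2^{<\omega}:\sigma\nprec A\}$, observes that $B$ is density-$1$ and $B\leq_\gen A$ for \emph{every} $A$ (a partial oracle for $A$ can enumerate the strings it rules out as initial segments), and then shows that when $A$ is weakly $1$-random, $B$ has no density-$1$ c.e.\ subset: any density-$1$ c.e.\ $W_e\subseteq B$ would put $A$ in the null $\Pi^0_1$ class $\{X:\forall\sigma\in W_e\ \sigma\nprec X\}$. That single null-class argument replaces the entire Chernoff/approximation machinery you were building, and it makes no claim about left c.e.\ sets at all. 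Your warm-up computation (the $\Pi^0_1$ null class $P^N$ for computable $X$) is correct and does establish that a weakly $1$-random real is not coarsely computable, but that weaker fact does not feed into Observation~\ref{weird1} in the way you need.
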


\noindent{\bf Remark.}
Weak 1-randomness is implied by 1-randomness, and also by weak
1-genericity and therefore 1-genericity. Therefore, in particular,
this shows that 1-randoms and 1-generics are density-1-bounding in the
generic degrees.

\begin{proof}
  The proof we present is very similar to that of Theorem~2.2 of
  Hirschfeldt, Jockusch, McNicholl, and Schupp \cite{HJMS}.

  Define $B=\{\sigma\in2^{<\omega}:\sigma\nprec A\}$. For purposes of
  density, it is important to determine which coding of $2^{<\omega}$
  as a subset of $\omega$ is being used. We use the standard order on
  $2^{<\omega}$: first by length of $\sigma$, and then
  lexicographically among $\sigma$ of the same length, with one small
  adjustment. We start with the empty string being coded by $n=1$, not
  $n=0$, so that $\{\sigma:|\sigma|=i\}$ will be coded in to $P_i$, as
  in Definition \ref{pi}. (There is no string coded by $n=0$.)

  Note that for any $A$ at all, $B$ as defined above is density-1,
  because there is exactly one $\sigma$ of each length not in
  $B$. Likewise, for any $A$ at all, $A\geq_gB$, as follows.

  If $(A)$ is any partial oracle for $A$, consider the partial
  computation of $B$ given by enumerating all $\sigma$ that $(A)$ is
  able to rule out as potential initial segments of
  $A$:
  $$\phi^{(A)}(\sigma)=1\leftrightarrow\exists n\big(n\in \dom((A))\
  \&\ n<|\sigma|\ \&\ (A)(n)\neq\sigma(n)\big).$$
  Note that for any partial oracle $(A)$, for $A$, $\phi^{(A)}$ is a
  partial computation of $B$.

  If $\dom((A))$ is infinite, then we claim that $\phi^{(A)}$ is a
  generic computation of $B$. To show this, we show that for each $e$,
  there are at most finitely many $i$ such that $\dom(\phi^{(A)})$ has
  a gap of size $2^{-e}$ at $P_i$, and then appeal to Lemma
  \ref{pilem}. To see this, let $e$ be given, and fix $i_0$ such that
  $|\dom((A))\upharpoonright i_0|\geq e+1$. For $i>i_0$, there are at
  most $2^{i-(e+1)}$ many $\sigma$ of length $i$ agreeing with $(A)$,
  and so $\phi^{(A)}$ enumerates at least $2^i-2^{i-(e+1)}$ many
  $\sigma$ of length $i$, and so $\dom(\phi^{(A)})$ does not have a
  gap of size $2^{-e}$ at $P_i$.


  It now remains to show that if $A$ is weakly 1-random, then the $B$
  that we built is not generically computable.

  Because $B$ is density-1, it is generically computable if and only
  if it has a density-1 c.e.\ subset. To prove that this cannot be the
  case, let $W_e$ be a c.e.\ set, thought of as coding a subset of
  $2^{<\omega}$. Consider
  $\mathcal{V}=\{X\in2^\omega:\forall\sigma\in W_e, \sigma\nprec
  X\}$.
  This is a $\Pi^0_1$ class, and we claim that if $W_e$ is density-1,
  then $\mathcal{V}$ is null, and we also claim that if $W_e$ is a
  subset of $B$, then $A$ is an element of $\mathcal{V}$.

  Both of these claims are straightforward from the definitions, and
  we leave the verification to the reader. If $A$ is weakly 1-random,
  then $A$ cannot be a member of any null $\Pi^0_1$ class, and so
  $W_e$ cannot be a generic computation of $B$.
\end{proof}

We now go on to prove that 1-generics and 1-randoms are quasiminimal
in the cofinite, (and hence mod-finite, generic and coarse) degrees.


Both of these proofs will be by the following lemma, which summarizes
and compiles the results of Sections 2 and 3 that we will use in this
section. Note that the results of Section 2 concerning
time-independent functionals apply equally well to cofinite reduction
as they do to generic reduction.

\begin{lem}\label{qmlemma}
  Assume $A$ is not quasiminimal in the cofinite degrees. Then there
  exists a time-independent Turing functional $\phi$ such that for any
  cofinite oracle $(A)$, for $A$, $\phi^{(A)}$ is total, and
  furthermore such that $\phi^A$ is not multivalued, and is a
  computation of a noncomputable real $B$.
\end{lem}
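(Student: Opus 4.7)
The plan is to unpack the non-quasiminimality hypothesis through the redundant coding $\mathcal{R}$, and then exploit that redundancy to turn a cofinite-domain partial computation into a total one. By definition of non-quasiminimality in the cofinite degrees, there is a noncomputable real $B$ with $\mathcal{R}(B)\leq_\cf A$, witnessed by some Turing functional $\psi$. Using the cofinite analogue of Observation \ref{canusetimeindependent} (which applies here by the note following Definition \ref{D:timeindependent} for cofinite reduction), I may take $\psi$ to be time-independent. Thus for every cofinite oracle $(A)$ for $A$, $\psi^{(A)}$ is a partial computation of $\mathcal{R}(B)$ whose domain is cofinite.

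Now I define the desired $\phi$ to be the time-independent functional whose output rule is: on input $n$ and partial oracle $X$, run $\psi^Y((2m+1)2^n)$ in parallel for every $m\in\omega$ and every finite partial oracle $Y$ whose domain is contained in $\dom(X)$ and which agrees with $X$ on that domain, emitting every value that any such computation halts with. Formally this is the recipe of Definition \ref{D:timeindependent}, so $\phi$ is a bona fide time-independent Turing functional. I then verify the two required properties. First, for any cofinite oracle $(A)$ for $A$, $\phi^{(A)}$ is total: since $\dom(\psi^{(A)})$ is cofinite while the ``column'' $\{(2m+1)2^n:m\in\omega\}$ is infinite, the two sets must meet, yielding a halting computation that feeds $\phi^{(A)}(n)$ at least one value. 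Second, for the oracle $A$ itself (read as the cofinite oracle that converges immediately everywhere), $\psi^A$ is a partial computation of $\mathcal{R}(B)$, so $\psi^A((2m+1)2^n)=\mathcal{R}(B)((2m+1)2^n)=B(n)$ whenever it halts; hence every value $\phi^A(n)$ can emit equals $B(n)$. Consequently $\phi^A$ is single-valued, total, and computes the noncomputable real $B$.

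There is no real obstacle here beyond bookkeeping. The one point requiring mild care is that single-valuedness of $\phi^A$ depends on $\psi^{(A)}$ never guessing incorrectly at \emph{any} bit of $\mathcal{R}(B)$, not merely almost every bit; this is exactly the content of Observation \ref{notwrong} (in its cofinite version), which promises that a partial-oracle computation witnessing a reduction cannot commit errors at any stage. Once that is in hand, the redundancy built into $\mathcal{R}$---each bit of $B$ broadcast across a positive-density column of bits of $\mathcal{R}(B)$---does all the lifting, converting ``cofinitely many columns have something in the domain'' into ``every input column has something in the domain,'' which is precisely the jump from partial to total that the lemma demands.
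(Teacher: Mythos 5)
Your argument is correct and follows essentially the same route as the paper: unpack non-quasiminimality to get a noncomputable $B$ with $\mathcal{R}(B)\leq_\cf A$, pass to a time-independent witness via Observation \ref{canusetimeindependent}, and then compose with the functional that recovers each bit $B(n)$ from the infinite column $\{(2m+1)2^n:m\in\omega\}$ of $\mathcal{R}(B)$, obtaining totality from the fact that a cofinite domain must meet every infinite column and single-valuedness from Observation \ref{notwrong}. The only difference from the paper's version is that you write out the composed functional explicitly rather than describing it in prose, which if anything makes the verification of totality and single-valuedness a bit more transparent.
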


\begin{proof}
  If $A$ is not quasiminimal in the cofinite degrees, then by
  definition of quasiminimality, these is a noncomputable $B$ such
  that $\mathcal{R}(B)\leq_\cf A$. By Observation
  \ref{canusetimeindependent}, there is a time-independent Turing
  functional $\psi$ such that $\mathcal{R}(B)\leq_\cf A$ via
  $\phi$. Any cofinite oracle for $\mathcal{R}(B)$ can be uniformly
  used to compute $B$, and if we use the time-independent version of
  the Turing functional that computes $B$ from a cofinite oracle for
  $\mathcal{R}(B)$, and compose that functional with $\psi$, then we
  obtain a time-independent Turing functional $\psi$ such that for any
  cofinite oracle $(A)$, for $A$, $\phi^{(A)}$ is a total computation
  of $B$.

  In particular, because cofinite computations are never allowed to
  make mistakes, $\phi^A$ only produces correct outputs concerning
  $B$, and so is not multivalued.
\end{proof}

\begin{prop}\label{1g}

  If $A$ is weakly 1-generic, then $A$ is quasi-minimal in the cofinite
  degrees, and hence in the mod-finite, generic, and coarse degrees.

\end{prop}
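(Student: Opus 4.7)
The plan is a proof by contradiction. Suppose $A$ is weakly 1-generic but not quasiminimal in the cofinite degrees. By Lemma \ref{qmlemma} we obtain a time-independent Turing functional $\phi$ (arising from an underlying Turing functional $\theta$ as in Definition \ref{D:timeindependent}) such that $\phi^{(A)}$ is total for every cofinite oracle $(A)$ of $A$, and $\phi^A$ is a non-multivalued computation of a noncomputable real $B$.

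Define the c.e.\ set
$$S=\{\sigma\in 2^{<\omega}:\exists n,\ \{0,1\}\subseteq\phi^\sigma(n)\},$$
with $\sigma$ interpreted as the partial oracle halting at each $i<|\sigma|$ with value $\sigma(i)$. By Observation \ref{moreismore}, for every $\sigma\prec A$ we have $\phi^\sigma(n)\subseteq\phi^A(n)=\{B(n)\}$, so no $\sigma\prec A$ lies in $S$; that is, $A$ does not meet $S$. The argument now splits on whether $S$ is dense in $2^{<\omega}$. If $S$ is dense, weak 1-genericity of $A$ forces $A$ to meet $S$, contradicting the previous sentence.

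If $S$ is not dense, fix a string $\tau$ with no extension in $S$. I claim that $B$ is then computable, contradicting noncomputability of $B$. The pivotal lemma is: \emph{for every $n$, no finite partial function $G$ with $\theta^G(n)\downarrow=1-B(n)$ satisfies $\min\dom(G)\geq|\tau|$.} To prove it, suppose such $G$ exists. Applying totality of $\phi$ to the cofinite oracle for $A$ with domain $[\max\dom(G)+1,\infty)$, obtain a finite sub-oracle $F\subseteq A$ with $\dom(F)\subseteq[\max\dom(G)+1,\infty)$ and $\theta^F(n)\downarrow=B(n)$. Then $\dom(\tau)$, $\dom(F)$, $\dom(G)$ are pairwise disjoint, so the partial function $\tau\cup F\cup G$ is well defined and extends to a total string $\sigma^*\succeq\tau$ of length $\max\dom(F\cup G)+1$. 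By Observation \ref{moreismore}, $\phi^{\sigma^*}(n)\supseteq\phi^F(n)\cup\phi^G(n)\supseteq\{0,1\}$, so $\sigma^*\in S$, contradicting the choice of $\tau$.

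Granting the pivotal lemma, the following uniform procedure computes $B$: on input $n$, search for any finite partial function $G$ with $\min\dom(G)\geq|\tau|$ and $\theta^G(n)\downarrow$, and output $\theta^G(n)$. Cofinite totality (applied to the cofinite oracle with domain $[|\tau|,\infty)$) guarantees such a $G$ exists, and the lemma forces its output to be $B(n)$. This contradicts the noncomputability of $B$. The main step — and the main technical point I would verify with care — is the pivotal lemma above, whose essence is the cofinite push-out property: any witness for $B(n)$ arising from $A$ can be shifted arbitrarily far to the right, and this freedom is what lets $F$ and the hypothetical $G$ be merged into a single multi-valued finite sub-oracle extending $\tau$.
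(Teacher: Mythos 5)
Your proof is correct and follows essentially the same route as the paper's: the same c.e.\ set $S$ of strings on which $\phi$ is multivalued, the same dichotomy (if $S$ is dense use weak 1-genericity, otherwise fix $\tau$ with no extensions in $S$), and the same core trick of using cofinite totality to push a correct finite witness from $A$ to the right of any hypothetical wrong witness $G$ so that their union extends $\tau$ and forces multivaluedness. The only cosmetic difference is that you package the correctness step as a standalone ``pivotal lemma'' about finite sub-oracles $G$, while the paper verifies the same fact inline for a search over total oracles $X$ with the first $|\tau|$ bits suppressed.
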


\begin{proof}
  Let $A$ be weakly 1-generic, and assume $A$ is not quasiminimal. By Lemma
  \ref{qmlemma}, there is a noncomputable $B$ and a time-independent
  Turing functional $\phi$ such that for any cofinite oracle $(A)$,
  for $A$, $\phi^{(A)}$ is total, and is a computation of $B$.  We
  prove then that $B$ is computable as follows.

  Consider $S=\{\sigma\in 2^{<\omega}:\phi^\sigma$ is
  multivalued$\}$. Note that $S$ is $\Sigma^0_1$ because
  $\sigma\in S\leftrightarrow\exists n\left(
  \phi^\sigma(n)\downarrow=1\ \&\ \phi^\sigma(n)\downarrow=0\right)$.
  If $S$ is dense, then because $A$ is weakly 1-generic, $A$ meets $S$, and
  so by Observation \ref{notwrong} it cannot be the case that $\phi^A$
  is a computation of $B$.

  If $S$ is not dense, then choose any $\tau$ that has no extensions
  in $S$. Let $m=|\tau|$. For any $X\in2^\omega$, let $X_m$ be the
  partial oracle for $X$ that does not halt on any of its first $m$
  bits. Then we claim that $B$ is computable by the functional $\psi$
  such that $\psi(n)$ searches for an $X$ such that
  $\phi^{X_m}(n)\downarrow$ and then outputs the same value as the
  found output.

  This computation is total because for any cofinite oracle $(A)$, for
  $A$, $\phi^{(A)}$ is total, and so in particular, for any $n$,
  $\phi^{(A)_m}(n)\downarrow$. Furthermore, when this computation
  halts, it gives a correct output for the following reason. Assume
  not, and fix $X$ such that $\phi^{X_m}(n)\downarrow\neq B(n)$. Let
  $k$ be the use of this computation (the smallest number such that
  only $X\upharpoonright k$ was required for the computation of
  $\phi^{X_m}(n)$). Note then that by hypothesis
  $\phi^{A_k}(n)\downarrow =B(n)$. Let $l$ be the use of this
  computation.

  Let $\sigma\in 2^{<\omega}$ be given by
  $\sigma\upharpoonright m=\tau$,
  $\sigma\upharpoonright [m,k)=X\upharpoonright [m,k)$, and
  $\sigma\upharpoonright [k,l)=A\upharpoonright [k,l)$. Note then that
  $\sigma\succeq \tau$, and furthermore, because $\phi$ is a
  time-independent functional, $\phi^\sigma(n)=\phi^{X_m}(n)\neq B$,
  and also $\phi^\sigma(n)=\phi^{A_k}(n)= B$, so $\phi^\sigma$ is
  multivalued, contradicting our choice of $\tau$.
\end{proof}

\begin{prop}\label{1r}

  If $A$ is 1-random, then $A$ is quasi-minimal in the cofinite
  degrees, and hence in the mod-finite, generic, and coarse degrees.

\end{prop}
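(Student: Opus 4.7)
The plan is to adapt the proof of Proposition \ref{1g} (the weak 1-generic case), replacing the density-of-$\Sigma^0_1$-sets dichotomy with a positive-measure-of-$\Pi^0_1$-classes argument appropriate to 1-randomness.

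Assume for contradiction that $A$ is 1-random and not quasiminimal in the cofinite degrees. By Lemma \ref{qmlemma}, fix a noncomputable real $B$ and a time-independent Turing functional $\phi$ such that for every cofinite oracle $(A)$ for $A$, $\phi^{(A)}$ is a total computation of $B$, and $\phi^A$ is single-valued; it suffices to derive a contradiction by showing that $B$ is computable. Exactly as in Proposition \ref{1g}, consider $S = \{\sigma \in 2^{<\omega} : \phi^\sigma \text{ is multivalued}\}$, which is $\Sigma^0_1$, and let $\mathcal{V} = 2^\omega \setminus [S]$, the complementary $\Pi^0_1$ class. Since $\phi^A$ is single-valued, $A \in \mathcal{V}$, and since $A$ is 1-random, the $\Pi^0_1$ class $\mathcal{V}$ cannot be Lebesgue-null, so $\mu(\mathcal{V}) > 0$. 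This positive-measure conclusion plays the role of the ``$S$ is not dense'' case in the 1-generic proof (the alternative ``$\mu(\mathcal{V}) = 0$'' would itself contradict 1-randomness immediately).

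Applying the Lebesgue density theorem to $\mathcal{V}$, fix a small rational $\epsilon > 0$ (say $\epsilon = 1/4$) and a string $\tau \in 2^{<\omega}$ with $\mu(\mathcal{V} \cap [\tau]) > (1 - \epsilon) \mu([\tau])$; equivalently, $\mu([S] \cap [\tau]) < \epsilon \mu([\tau])$. Let $m = |\tau|$ and, as in the proof of Proposition \ref{1g}, let $X_m$ denote the partial oracle derived from $X \in 2^\omega$ that does not halt on inputs $< m$ and halts with $X$'s values elsewhere. The putative computation of $B(n)$ outputs the value $v \in \{0,1\}$ such that the measure of the $\Sigma^0_1$ class $R_n^v = \{X : \phi^{X_m}(n){\downarrow} = v\}$, approximated from below effectively, can be seen to exceed an appropriately chosen threshold; the threshold will be tuned in terms of $\epsilon$ and the measure accounting below.

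The key step is a measure-theoretic strengthening of the merging argument of Proposition \ref{1g}: if $\phi^{X_m}(n){\downarrow} = v \neq B(n)$ for a witnessing cylinder of $R_n^v$, then pasting $\tau$ on $[0,m)$, the witnessing prefix of $X$ on $[m, k)$, and $A \upharpoonright [k, l)$ (where $l$ is the use of $\phi^{A_k}(n) = B(n)$) yields a string $\sigma \succeq \tau$ with $\phi^\sigma$ multivalued, i.e., $\sigma \in S \cap [\tau]$. Taking a prefix-free decomposition of the wrong-value witnesses in $R_n^v$ and summing the resulting cylinder contributions in $[S] \cap [\tau]$ converts a c.e.-verified lower bound on $\mu(R_n^v)$ for the wrong $v$ into a c.e.-verified lower bound on $\mu([S] \cap [\tau])$, which contradicts the choice of $\tau$ once the threshold is chosen correctly; termination for $v = B(n)$ follows because every partial oracle $Y_m$ with $Y$ agreeing with $A$ on $[m,\infty)$ is a cofinite oracle for $A$, which via Observation \ref{moreismore} and time-independence contributes positive measure to $R_n^{B(n)}$, which is then amplified past the threshold using the same merging contradiction to rule out the wrong value. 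The main obstacle is the measure bookkeeping in the merging step: the merged string $\sigma$ extends past $k$ by the $A$-governed use, so one must uniformly control the use-function's contribution so that the passage from $\mu(R_n^v)$ to $\mu([S] \cap [\tau])$ loses only a bounded factor, allowing the threshold in the computation of $\psi(n)$ to be chosen so that termination and correctness both go through with a single value of $\epsilon$.
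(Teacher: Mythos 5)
Your setup is sensible as far as it goes: $\mathcal{V}=2^\omega\setminus[S]$ is a $\Pi^0_1$ class, $A\in\mathcal{V}$ since time-independence propagates any multivaluedness of $\phi^{A\upharpoonright k}$ up to $\phi^A$, so 1-randomness gives $\mu(\mathcal{V})>0$, and Lebesgue density produces $\tau$ with $\mu([S]\cap[\tau])<\epsilon\,2^{-|\tau|}$. But the argument does not close, and the gap you flag as ``the main obstacle'' is fatal rather than a bookkeeping detail. When you merge a wrong-value witness cylinder $[\rho]$ (of length $k$, witnessing $\phi^{X_m}(n)=v\neq B(n)$) with $A\upharpoonright[k,l)$, where $l$ is the use of $\phi^{A_k}(n)$, you obtain a \emph{single} string $\sigma$ of length $l$ with $\mu([\sigma])=2^{-(l-k)}\mu([\rho])$. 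The stretch $l-k$ depends on $A$'s use and on the varying $k$'s across the prefix-free decomposition of the wrong-value witnesses, and is not bounded by anything computable; so the conversion of ``$\mu(R_n^v)$ exceeds the threshold'' into ``$\mu([S]\cap[\tau])\geq\epsilon\,2^{-|\tau|}$'' loses an uncontrolled factor and simply does not go through. Your termination claim has a second independent gap: $\phi^{A_m}(n)\downarrow$ gives $\mu(R_n^{B(n)})\geq 2^{-(k-m)}$ for some $A$-dependent $k$, which is positive but can be arbitrarily small, so no fixed threshold works; for the threshold strategy to make sense you would at minimum need $\mu(R_n^0\cup R_n^1)$ close to $1$, i.e., the full-measure fact $\mu_m=1$ of Lemma \ref{muis1}, which you have not established and which is itself a nontrivial 1-randomness argument (the ``independent reasons'' construction).

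The paper avoids both problems by a structurally different route. Rather than fixing one Lebesgue-density cylinder $[\tau]$, it works with partial oracles $X_{k_i,k_{i+1}}$ supported on \emph{disjoint} intervals, so that events on different floors of a tower are genuinely independent and measures multiply cleanly with no use-dependent loss. Lemma \ref{muis1} supplies $\mu_k=1$ (solving termination), the 90\%-halting/80\%-agreement/60\%-disagreement tower dichotomy of Lemmas \ref{90towers}--\ref{no60} supplies the consensus that your single-merge accounting cannot, and the final algorithm is the majority vote of Lemma \ref{80isright}. Your instinct that $\phi$ must eventually reach a high-measure consensus on $B(n)$ is on the right track, but reaching it requires the interval discipline of the towers, not a one-shot merge against $A$'s computation.
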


The proof of Proposition \ref{1r} is somewhat more involved than the
proof of Proposition \ref{1g}. It will use Lemma \ref{qmlemma} as well
as a fairly subtle control of the halting measure of
$\phi$. Throughout the remainder of the section, we will always assume
that $\phi$ is a time-independent Turing functional.

\begin{defn}
  Fix $\phi$, and an integer $n$. Let $k\leq l$ be integers.

  For any real $X$, let $X_k$ be the partial oracle for $X$ that does
  not halt on inputs less than $k$, and let $X_{k,l}$ be the partial
  oracle for $X$ that does not halt on inputs less than $k$ or greater
  than or equal to $l$.

  Define $\mu_k=\mu(\{X:\phi^{X_k}(n)\downarrow\})$, the halting
  measure of $\phi$-computations that do not use the first $k$ bits of
  their oracles.

  Similarly, define
  $\mu_{k,l}=\mu(\{X:\phi^{X_{k,l}}(n)\downarrow\})$.

\end{defn}

Note that $\phi$ and $n$ are suppressed in the notation for brevity.

\begin{obs}\label{mu}
  \

  If $k_0<k_1$, then for any $l$, $\mu_{k_0,l}\geq\mu_{k_1,l}$, and
  also $\mu_{k_0}\geq\mu_{k_1}$.

  If $l_0<l_1$, then for any $k$, $\mu_{k,l_0}\leq\mu_{k,l_1}$.

  For any $k$, $\mu_k=\lim_{l\rightarrow\infty}\mu_{k,l}$.

\end{obs}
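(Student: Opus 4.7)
The plan is to obtain all three statements as direct consequences of Observation \ref{moreismore} (more oracle produces more outputs), combined with basic continuity of Lebesgue measure. The measure $\mu$ here is the usual fair-coin measure on $2^\omega$. Throughout, the key bookkeeping is comparing the \emph{domains} of the partial oracles $X_k$ and $X_{k,l}$: $\dom(X_k)=[k,\infty)$ and $\dom(X_{k,l})=[k,l)$, and each agrees with $X$ on its domain.

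For the first claim, note that when $k_0<k_1$ the domain of $X_{k_1,l}$ is contained in the domain of $X_{k_0,l}$ (we simply shrink the ``active'' window from the left), and both agree with $X$ where defined. By Observation \ref{moreismore}, $\phi^{X_{k_1,l}}(n){\downarrow}$ implies $\phi^{X_{k_0,l}}(n){\downarrow}$, so
\[\{X:\phi^{X_{k_1,l}}(n){\downarrow}\}\subseteq\{X:\phi^{X_{k_0,l}}(n){\downarrow}\},\]
and the measure inequality $\mu_{k_0,l}\geq\mu_{k_1,l}$ follows. The same comparison with $\dom(X_k)=[k,\infty)$ gives $\mu_{k_0}\geq\mu_{k_1}$. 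The second claim is entirely symmetric: when $l_0<l_1$ the window $[k,l_0)$ sits inside $[k,l_1)$, so $\dom(X_{k,l_0})\subseteq\dom(X_{k,l_1)}$, and Observation \ref{moreismore} yields the reverse containment of halting sets, hence $\mu_{k,l_0}\leq\mu_{k,l_1}$.

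For the third claim, the sets $H_l:=\{X:\phi^{X_{k,l}}(n){\downarrow}\}$ form an increasing chain by what we just proved, so $\lim_l \mu_{k,l}=\mu\bigl(\bigcup_l H_l\bigr)$ by continuity of measure from below. It therefore suffices to show $\bigcup_l H_l=\{X:\phi^{X_k}(n){\downarrow}\}$. The inclusion ``$\subseteq$'' is immediate from Observation \ref{moreismore}, since $X_{k,l}$ is a subfunction of $X_k$. For the reverse inclusion, if $\phi^{X_k}(n){\downarrow}$, then by the definition of time-independent functionals the convergence is witnessed by a finite subfunction of $X_k$; taking $l$ larger than any input queried by that witness gives $\phi^{X_{k,l}}(n){\downarrow}$ as well, so $X\in H_l$.

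I do not expect any genuine obstacle here: the whole observation is a monotonicity/continuity statement for a family of $\Sigma^0_1$ events, and the work has already been done in Observation \ref{moreismore}. The only point to be careful about is invoking the ``finite-use'' property in the correct form for time-independent functionals (Definition \ref{D:timeindependent}), so that passing to the limit in $l$ in the third claim is genuinely justified.
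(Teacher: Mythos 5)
Your proof is correct and follows the same route as the paper's: the first two claims come from Observation \ref{moreismore} applied to the nested domains $[k_1,l)\subseteq[k_0,l)$ (and $[k,l_0)\subseteq[k,l_1)$), and the third from the finite-use property of time-independent functionals combined with continuity of measure from below. You have simply spelled out in full detail what the paper compresses into two sentences.
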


\begin{proof}
  The first two facts follow from Observation \ref{moreismore}, which
  says that larger oracles never have smaller halting sets. The third
  fact is because any computation that halts does so using only
  finitely much from its oracle.
\end{proof}





\begin{lem}\label{muis1}
  Let $A$ be 1-random, $\phi$ a time-independent Turing functional,
  and $n$ an integer. If for every cofinite oracle $(A)$ for $A$,
  $\phi^{(A)}(n)\downarrow$, then, for that $\phi$ and $n$, for every
  $k$, $\mu_k=1$.
\end{lem}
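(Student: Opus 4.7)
The plan is to argue by contradiction: I will assume $\mu_k < 1$ for some $k$, set $c = \mu_k$, and construct a Solovay test that $A$ fails, contradicting its $1$-randomness.

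First I will iterate the hypothesis to extract a sequence of halting uses for $A$. Set $L_0 = k$; having defined $L_i$, note that $A \upharpoonright [L_i, \infty)$ is a cofinite oracle for $A$, so by hypothesis $\phi^{A_{L_i}}(n)\downarrow$. Let $F_i \subseteq [L_i, \infty)$ be the use of the first witnessed halting computation, and set $L_{i+1} = \max(F_i) + 1$. Then $\phi^{A_{L_i, L_{i+1}}}(n)\downarrow$ for every $i$.

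Next, define for each $m \geq 1$ the uniformly c.e.\ open class
$$W_m = \bigl\{X : \exists\, k = L_0 < L_1 < \cdots < L_m \text{ with } \phi^{X_{L_i, L_{i+1}}}(n)\downarrow \text{ for each } i < m\bigr\},$$
so that $A \in W_m$ for every $m$ by the preceding step. The key bound is $\mu(W_m) \leq c^m$. I will prove by induction on $m$ the stronger statement that the analogous class $W_m^{[L]}$ defined with $L_0 = L$ has measure at most $(\mu_L)^m$ for every $L \geq k$. The inductive step treats $T(X) = \min\{L' > L : \phi^{X_{L, L'}}(n)\downarrow\}$ as a stopping time: the event $\{T(X) = L'\}$ depends only on $X \upharpoonright [L, L')$, so conditional on it the bits on $[L', \infty)$ remain independent and uniformly distributed. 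Combining $\sum_{L'} \Pr[T(X) = L'] = \mu_L$ with the inductive hypothesis at $L'$ and the bound $\mu_{L'} \leq \mu_L$ (Observation~\ref{mu}) gives $\mu(W_m^{[L]}) \leq (\mu_L)^m$. Specializing to $L = k$ yields $\mu(W_m) \leq c^m$.

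Since $c < 1$, $\sum_m \mu(W_m) \leq \sum_m c^m = c/(1-c) < \infty$, so $(W_m)_{m \geq 1}$ is a Solovay test. But $A \in W_m$ for every $m$, so Solovay's characterization of $1$-randomness is violated, contradicting the assumption that $A$ is $1$-random. Hence $\mu_k = 1$ for every $k$. I expect the delicate point to be verifying that $T(X)$ is a genuine stopping time on the bit sequence, so that conditioning on its value genuinely leaves future bits uniformly distributed; once that is in hand, the chain-rule computation and the Solovay conclusion are routine.
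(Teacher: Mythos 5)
Your proof is correct and is essentially the same argument as the paper's. The paper's sets $U_i$ (defined pointwise via minimal uses $k_{X,i}$) coincide with your $W_i$ (existentially quantified over all admissible chains $L_0<\cdots<L_i$, using Observation~\ref{moreismore} to pass to minimal uses), and both proofs exploit the same independence of disjoint oracle blocks to get a geometric decay $\mu\leq c^m$; your Solovay-test packaging versus the paper's direct Martin-L\"of-style contradiction is a cosmetic difference only. One small point worth making explicit in a write-up: the containment $W_m^{[L]}\cap\{T=L'\}\subseteq W_{m-1}^{[L']}$ needs Observation~\ref{moreismore} (monotonicity), since the witnessing chain for $W_m^{[L]}$ may have $L_1>T(X)=L'$, and one must replace $X_{L_1,L_2}$ by the larger oracle $X_{L',L_2}$ to get a chain starting exactly at $L'$.
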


\begin{proof}
  Assume not. Fix $k$ such that $\mu_k\neq 1$, and fix some computable
  $\epsilon>0$ such that $\mu_k<1-\epsilon$.

  Consider the open sets $U_i$ defined by $X\in U_i$ if $\phi^{X_k}$
  converges for at least $i$ different independent reasons. We make
  this precise by an inductive definition as follows.

  Let $U_0=2^\omega$, and for every $X$, ${k_{X,0}}=k$.

  Having defined $U_i$, and ${k_{X,i}}$ for every $X$ in $U_i$, we let
  $U_{i+1}=\{X\in U_i:\phi^{X_{k_{X,i}}}\downarrow\}$, and for every
  $X$ in $U_{i+1}$, we let $k_{X,i+1}$ be the minimum number $s$ such
  that $\phi^{X_{k_{X,i}}}\downarrow$ in less than $s$ steps, and
  using less than the first $s$ bits of the oracle $X$. Such an $s$
  can be found computably in $X$ for any $X$ in $U_{i+1}$, and so each
  $U_i$ is defined $\Sigma_1$ in the previous one. Thus, the $U_i$ are
  uniformly $\Sigma_1$ sets.

  Furthermore, $\mu(U_i)\leq(1-\epsilon)^i$. This is because to
  determine whether or not $X\in U_i$, we must see that $\phi^{X_k}$
  halts (a measure $<1-\epsilon$ event), then we ignore everything
  that caused $\phi^{X_k}$ to halt, and require that
  $\phi^{X_{k_{X,0}}}$ halts (another measure $<1-\epsilon$ event) and
  so on. Each event is independent, and so the probability of meeting
  all of them is equal to their product.

  If for every $l$, $\phi^{A_l}\downarrow$, then $A$ must be in every
  $U_i$. This contradicts the assumption that $A$ is 1-random.
\end{proof}

This lemma will be what we require in order to construct a collection
of ``towers'' that will prove our contradiction. We will use the ideas
of a ``90\%-halting tower,'' an ``80\%-agreement tower,'' and a
``60\%-disagreement tower.'' The numbers 90\%, 80\%, and 60\% are not
special: the only important facts about them are that
$100\%>90\%>80\%>50\%$, and also that $80\%>60\%>0\%$. For our proof,
it will be convenient that $0.6<0.8^2$.

\begin{defn}
  Fix $\phi$ and $n$, and $k$. Then for that $\phi$, $n$ and $k$, a
  \emph{90\%-halting tower starting at $k$} is a sequence of numbers
  $\langle k_i:i\in\omega\rangle$ such that $k_0=k$, for every $i$,
  $k_{i+1}>k_i$, and $\mu_{k_i,\mu_{k_{i+1}}}>0.9$.
\end{defn}

\begin{obs}
  Fix $\phi, n$.

  If there exists some $k$ such that there is a 90\%-halting tower
  starting at $k$, then for every $k$, there is a 90\%-halting tower
  starting at $k$. Moreover, that 90\% halting tower can be found
  uniformly computably in $n, k$.

\end{obs}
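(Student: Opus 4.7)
The plan is to deduce from the existence of a single 90\%-halting tower that $\mu_{k'} > 0.9$ for every $k'$, and then build towers starting at any prescribed $k'$ by a direct computable search, appealing only to the lower semicomputability of the halting measures.

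Suppose $\langle k_i : i \in \omega\rangle$ is a 90\%-halting tower starting at some $k$. For each $i$, the third clause of Observation \ref{mu} together with the defining inequality of the tower gives $\mu_{k_i} \geq \mu_{k_i, k_{i+1}} > 0.9$. Now let $k' \in \omega$ be arbitrary. If $k' \leq k_0$, then by monotonicity of $\mu_k$ in $k$ (first clause of Observation \ref{mu}), $\mu_{k'} \geq \mu_{k_0} > 0.9$. Otherwise, since the $k_i$ are strictly increasing, pick any $i$ with $k_i \geq k'$, and again $\mu_{k'} \geq \mu_{k_i} > 0.9$. Thus $\mu_{k'} > 0.9$ for every $k' \in \omega$.

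Given any $k'$, construct a tower starting at $k'$ inductively: set $k'_0 = k'$, and having defined $k'_i$, search for the least $l > k'_i$ with $\mu_{k'_i, l} > 0.9$, then let $k'_{i+1} = l$. Such an $l$ exists because $\mu_{k'_i} = \lim_{l \to \infty} \mu_{k'_i, l} > 0.9$ by the previous paragraph. The search is effective because, for any fixed $k, l$, the event $\phi^{X_{k,l}}(n)\downarrow$ depends only on $X \upharpoonright [k, l)$, so $\mu_{k, l}$ is uniformly c.e.\ from below as a dyadic rational in $n, k, l$: enumerate the finite strings $\tau \in 2^{[k,l)}$ for which we see the corresponding computation halt, and declare $\mu_{k, l} > 0.9$ as soon as their count exceeds $0.9 \cdot 2^{l-k}$. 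Iterating this step yields the desired tower uniformly computably in $n$ and $k$.

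The only real point to pin down is this lower semicomputability of $\mu_{k, l}$, which is the main obstacle in a purely formal sense, but it follows directly from the $\Sigma_1$ nature of halting together with the observation that the relevant oracle bits lie in a fixed finite interval; once this is in hand the construction is essentially immediate.
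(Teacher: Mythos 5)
Your argument is correct and is essentially the paper's proof: both run the greedy algorithm $k'_0 = k$ and repeatedly extend by searching for an $l$ with $\mu_{k'_i, l} > 0.9$, using the hypothesized tower to guarantee the search succeeds. You package the termination argument slightly differently --- first deriving $\mu_{k'} > 0.9$ for every $k'$ via Observation \ref{mu} and then invoking $\mu_{k'} = \lim_l \mu_{k', l}$, whereas the paper directly locates an element $l_i \geq k'_i$ of the given tower and notes $\mu_{k'_i, l_{i+1}} \geq \mu_{l_i, l_{i+1}} > 0.9$ --- and you spell out the lower-semicomputability of the dyadic rationals $\mu_{k,l}$, which the paper leaves implicit. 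One small slip: the \emph{least} $l$ with $\mu_{k'_i, l} > 0.9$ is not computable, since certifying $\mu_{k'_i, l'} \leq 0.9$ for smaller $l'$ is a $\Pi^0_1$ task; you should instead take the first $l$ confirmed by a dovetailed enumeration (as the paper acknowledges, ``some other smaller or larger $k$ might be found first''). Since the statement only asks for some computably-found tower, this is a cosmetic fix rather than a gap.
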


\begin{proof}
  The obvious greedy algorithm works for this. Let $k_0=k$, and search
  for a $k_1$ such that $\mu_{k_0,\mu_{k_{1}}}>0.9$. Eventually such a
  $k_1$ will be found because there is a 90\%-halting tower,
  $\langle l_i:i\in\omega\rangle$ starting somewhere, and that other
  halting tower must have some $i$ such that $l_i\geq k_0$. But then
  $\mu_{k_0,l_{i+1}}\geq\mu_{l_i,l_{i+1}}\geq0.9$ (using Observation
  \ref{mu}). Thus, $l_{i+1}$ would work as $k_1$, although some other
  smaller or larger $k$ might be found first. We then proceed
  inductively to define each $k_i$.
\end{proof}

\begin{lem}\label{90towers}
  Assume that $\phi$ is such that for every $n$, and for every $k$,
  $\mu_k=1$. Then, for that $\phi$ and for every $n$ and $k$, there
  exists a 90\%-halting tower starting at $k$.

\end{lem}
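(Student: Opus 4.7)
The plan is to construct the 90\%-halting tower $\langle k_i:i\in\omega\rangle$ recursively, using the hypothesis $\mu_k=1$ (for every $n$ and every $k$) together with the third clause of Observation \ref{mu}.

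First I would set $k_0 = k$ as required by the definition. For the inductive step, suppose $k_0 < k_1 < \cdots < k_i$ have been chosen so that $\mu_{k_j, k_{j+1}} > 0.9$ for all $j < i$. I would like to produce $k_{i+1} > k_i$ with $\mu_{k_i, k_{i+1}} > 0.9$. By the hypothesis applied with the integer $k_i$ (for the fixed $n$), we have $\mu_{k_i} = 1$. By the third fact of Observation \ref{mu}, $\mu_{k_i} = \lim_{l\to\infty} \mu_{k_i, l}$, so this limit equals $1$. Hence there exists $l$ large enough that $\mu_{k_i, l} > 0.9$; by enlarging $l$ if necessary (using the second fact of Observation \ref{mu}, which says $\mu_{k,l}$ is nondecreasing in $l$), we may also assume $l > k_i$. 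Set $k_{i+1} := l$.

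This completes the inductive construction and produces the required tower. The main obstacle is really no obstacle at all: the entire content of the lemma is packaged in the limit statement $\mu_k = \lim_{l\to\infty} \mu_{k,l}$, and the hypothesis $\mu_k = 1$ just says that this limit can be driven above any threshold $<1$ (in particular, above $0.9$) by choosing $l$ large. Thus the lemma is essentially a reformulation of the hypothesis, and no delicate construction or further use of the randomness of $A$ is needed here — the randomness was already consumed in Lemma \ref{muis1} to establish the hypothesis $\mu_k = 1$ in the first place.
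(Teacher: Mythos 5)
Your proof is correct and matches the paper's argument: both use the same greedy recursion, appealing to $\mu_{k_i}=\lim_{l\to\infty}\mu_{k_i,l}=1$ from Observation~\ref{mu} to find the next level of the tower. The only (very minor) difference is that you explicitly invoke monotonicity in $l$ to ensure $k_{i+1}>k_i$, a step the paper leaves implicit.
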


\begin{proof}
  Once again, the obvious greedy algorithm works. We let $k_0=k$, and
  for each $i$, let $k_{i+1}$ be the first $l$ found such that
  $\mu_{k_i,l}\geq0.9$. Such an $l$ exists because
  $\lim_{l\rightarrow\infty}\mu_{k_i}=1$.
\end{proof}

\begin{defn}
  Fix $\phi, n, k$, and $\langle k_i:i\in\omega\rangle$, a
  90\%-halting tower starting at $k$.  Then
  $\langle k_i:i\in\omega\rangle$ is an \emph{80\%-agreement tower} if
  there exists some $v\in\{0,1\}$ such that:

  \begin{enumerate}

  \item There exists an $i$ such that
    $\mu\{X:\phi^{X_{k_i,k_{i+1}}}(n)=v\}>0.8$ and,

  \item There does not exist an $i$ such that
    $\mu\{X:\phi^{X_{k_i,k_{i+1}}}(n)\neq v\}>0.8$.
  \end{enumerate}
  In this case, we will sometimes say that
  $\langle k_i:i\in\omega\rangle$ is an 80\%-agreement tower with
  value $v$.
\end{defn}

The purpose of these 80\% agreement towers is that they will allow us
to compute $B$ without needing $A$ as an oracle. We will later prove
that they must exist, but first we show how they can be used to
compute $B$.

\begin{lem}\label{80isright}
  Assume that $A$ is 1-random, and that $\phi^{A}$ is a computation of
  $B$. Fix $n, k, v$. Assume that for that $\phi, n$, there exists a
  computable 80\%-agreement tower, $\langle k_i:i\in\omega\rangle$,
  with value $v$ starting at $k$. Then $B(n)=v$.
\end{lem}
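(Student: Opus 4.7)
The plan is to prove $B(n) = v$ by contradiction: assume $B(n) = 1-v$ and build a Solovay test that captures $A$, contradicting its 1-randomness. For each $i$, write $V_i^v := \{X \in 2^\omega : \phi^{X_{k_i, k_{i+1}}}(n) = v\}$ and similarly $V_i^{1-v}$. The first step is to show that the assumption rules out $A \in V_i^v$ for every $i$: if $\phi^{A_{k_i, k_{i+1}}}(n) = v$ for some $i$, then Observation \ref{moreismore} applied to the inclusion $A_{k_i, k_{i+1}} \subseteq A_{k_i}$ of partial oracles gives $\phi^{A_{k_i}}(n) = v$; but $A_{k_i}$ is a cofinite oracle for $A$, and by Lemma \ref{qmlemma} $\phi^{A_{k_i}}$ computes $B$, so Observation \ref{notwrong} forces $B(n) = v$, a contradiction.

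Next I would bound the measures uniformly in $i$. The 90\%-halting tower property gives $\mu(V_i^v \cup V_i^{1-v}) = \mu_{k_i, k_{i+1}} > 0.9$, while clause (2) of the 80\%-agreement definition gives $\mu(V_i^{1-v}) \leq 0.8$ for every $i$, so by subadditivity $\mu(V_i^v) > 0.1$ for every $i$. Crucially, by time-independence each $V_i^v$ depends only on the bits of $X$ on $[k_i, k_{i+1})$; since the intervals are pairwise disjoint, the events $\{V_i^v\}_{i \in \omega}$ are mutually measure-theoretically independent.

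Finally, set $W_m := V_0^v \cup \cdots \cup V_{m-1}^v$. Because the tower is computable, $W_m$ is uniformly $\Sigma^0_1$; by independence $\mu(W_m^c) = \prod_{i<m}(1 - \mu(V_i^v)) < (0.9)^m$; and by the first step $A \in W_m^c$ for every $m$. To convert the $\Pi^0_1$ classes $W_m^c$ into a Solovay test I would use lower semicomputability of $\mu(W_m)$, together with the explicit computable lower bound $1 - (0.9)^m$, to find uniformly in $m$ a clopen approximation $T_m \subseteq W_m$ with $\mu(T_m^c) \leq 2(0.9)^m$. The sequence $\langle T_m^c \rangle$ is then a uniformly $\Sigma^0_1$ sequence of clopen sets containing $A$ with $\sum_m \mu(T_m^c) < \infty$, a Solovay test that captures $A$ and contradicts its 1-randomness.

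The main obstacle is this final conversion step — extracting the clopen approximations $T_m$ effectively and uniformly in $m$ from the lower-semicomputable measures — but this is routine bookkeeping given the explicit computable upper bound $(0.9)^m$ on $\mu(W_m^c)$. Everything else is a direct assembly of the two basic observations about partial oracles with the measure-theoretic independence inherited from time-independence and the disjointness of the tower's intervals.
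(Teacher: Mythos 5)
Your proof follows the same architecture as the paper's: show $A\notin V_i^v$ for every $i$, bound $\mu(V_i^v)>0.1$ using the $90\%$-halting and $80\%$-agreement properties, exploit that the $V_i^v$ depend on disjoint blocks of the oracle to get measure-theoretic independence, and derive a contradiction with the $1$-randomness of $A$. Two comments.

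First, your justification that $A\notin V_i^v$ contains a misstep: you invoke Lemma~\ref{qmlemma} to conclude that $\phi^{A_{k_i}}$ computes $B$, but that lemma is an existence statement (it \emph{produces} a suitable functional from the hypothesis of non-quasiminimality) and does not apply to an arbitrary given $\phi$; the hypothesis of Lemma~\ref{80isright} only says that $\phi^A$ computes $B$. The detour is also unnecessary. Since $A_{k_i,k_{i+1}}$ is a sub-oracle of the total oracle $A$, Observation~\ref{moreismore} directly gives that $\phi^{A_{k_i,k_{i+1}}}(n)=v$ implies $\phi^A(n)=v$; but $\phi^A$ is a single-valued computation of $B$ with $B(n)=1-v$, so $\phi^A(n)$ would be multivalued, a contradiction. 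That is the paper's one-line argument, and it uses only the stated hypothesis.

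Second, your final step builds an explicit Solovay test, whereas the paper simply observes that $C=\bigcap_i (V_i^v)^{\mathrm{c}}$ is a null $\Pi^0_1$ class (its measure is $\prod_i\mu((V_i^v)^{\mathrm{c}})\leq\prod_i 0.9=0$ by independence) and that no $1$-random can belong to a null $\Pi^0_1$ class. Your Solovay test construction is correct and in effect unfolds the proof of that standard fact, using the explicit bound $\mu(W_m^{\mathrm{c}})<(0.9)^m$ to locate clopen approximations effectively; the paper's appeal is more economical but carries the same content. Overall the proposal is correct once the Lemma~\ref{qmlemma} invocation is replaced by the direct use of Observation~\ref{moreismore} and the single-valuedness of $\phi^A$.
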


\begin{proof}

  Assume the hypotheses are true and the conclusion is false.

  Consider the open sets $U_i=\{X:\phi^{X_{k_i,k_{i+1}}}(n)=v\}$. We
  have that $A$ is not in any $U_i$ because $\phi^{A}(n)=B(n)\neq
  v$.
  Also, for every $i$,
  $\mu\{X:\phi^{X_{k_i,k_{i+1}}}(n)\downarrow\}>0.9$, and there does
  not exist an $i$ such that
  $\mu\{X:\phi^{X_{k_i,k_{i+1}}}(n)\neq v\}>0.8$. Therefore
  $\mu(U_i>10\%)$.

  Let $C_i$ be the complement of $U_i$, and let $C=\bigcap_i C_i$. The
  measure of the intersection of the $C_i$ is equal to the product of
  the measures of the $C_i$ because each $C_i$ is defined in terms of
  only the bits of $X$ between $k_i$ and $k_{i+1}$. Therefore $C$ is a
  null $\Pi^0_1$ set, and $A\in C$. This contradicts the assumption
  that $A$ is 1-random.
\end{proof}

To prove that they must exist, we will use 60\%-disagreement towers,
which will derandomize $X$ if $\phi$ does not produce enough 80\%
agreement towers. These 60\%-disagreement towers, unlike most of our
other work in this proof, will not be fixed to a specific $n$.

\begin{defn}

  Let $\phi$ be given. Then a \emph{60\%-disagreement tower} for
  $\phi$ is a sequence $\langle m_i:i\in\omega\rangle$ such that for
  every $i$,

  $\mu\left\{X:(\exists n)\left(\phi^{X_{m_i,m_{i+1}}}\text{ is a
        multivalued function on }n\right)\right\}>0.6$.

\end{defn}

\begin{obs}
  If there is a 60\%-disagreement tower for $\phi$, then there is a
  computable 60\%-disagreement tower for $\phi$.

\end{obs}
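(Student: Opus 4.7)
The plan is a greedy construction paralleling the one used for the 90\%-halting tower observation. Given the current index $m_i$, define
\[ S_{m, l} := \{X : (\exists n)(\phi^{X_{m,l}} \text{ is multivalued on } n)\}, \]
and search computably for any $l > m_i$ for which we can certify $\mu(S_{m_i, l}) > 0.6$; set $m_{i+1} := l$. Starting with $m_0 := 0$ and iterating produces a uniformly computable sequence $\langle m_i : i \in \omega \rangle$.

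First I would check that this search can be carried out effectively. Because $\phi$ is time-independent, the condition ``$\phi^\sigma(n) = v$'' is $\Sigma^0_1$ in a finite partial oracle $\sigma$, so for fixed $m < l$ the set $S_{m,l}$ is a $\Sigma^0_1$ open set depending only on $X \upharpoonright [m, l)$. Hence $\mu(S_{m, l})$ is effectively approximable from below uniformly in $(m, l)$, and whenever the true measure exceeds $0.6$, this fact is certified at some finite stage of the approximation.

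The main (and only substantive) step is to verify that a suitable $l$ must always exist. Suppose $\langle a_j : j \in \omega \rangle$ is any (not necessarily computable) 60\%-disagreement tower, and, given $m_i$, choose $j$ with $a_j \geq m_i$. Since $\dom(X_{a_j, a_{j+1}}) \subseteq \dom(X_{m_i, a_{j+1}})$, with the two partial oracles agreeing on the smaller domain, Observation \ref{moreismore} implies that any output of $\phi^{X_{a_j, a_{j+1}}}(n)$ is also an output of $\phi^{X_{m_i, a_{j+1}}}(n)$. Multivaluedness therefore transfers: $S_{a_j, a_{j+1}} \subseteq S_{m_i, a_{j+1}}$, so $\mu(S_{m_i, a_{j+1}}) \geq \mu(S_{a_j, a_{j+1}}) > 0.6$, and $a_{j+1}$ (or some smaller witness found first by the search) serves as the desired $l$. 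There is no real obstacle here; the argument is the same monotonicity-plus-greedy-search pattern already exploited for 90\%-halting towers, with the only mild subtlety being the confirmation that the multivaluedness event is a $\Sigma^0_1$ event confined to the bits of $X$ in $[m_i, l)$ so that its measure can be effectively approximated from below.
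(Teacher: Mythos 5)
Your proof is correct and follows the same greedy approach the paper intends; the paper's own proof is just the one line ``a greedy algorithm produces a computable 60\%-disagreement tower,'' with the details (monotonicity of multivaluedness under domain extension, effective lower semicomputability of $\mu(S_{m,l})$ via its being a $\Sigma^0_1$ event localized to $[m,l)$) left implicit by analogy with the 90\%-halting tower case, which you have correctly spelled out.
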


\begin{proof}
  Again, a greedy algorithm produces a a computable 60\%-disagreement
  tower.
\end{proof}

\begin{lem}\label{80or60}
  Let $\phi$ be be given.
  Then, either there exists a $k$ such that every 90\%-halting tower
  (for any $n$) starting at $k$ is an 80\%-agreement tower, or there
  exists a 60\%-disagreement tower for $\phi$.

\end{lem}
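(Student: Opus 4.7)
The plan is to prove the contrapositive: assuming that for every $k$ there is some $n$ and some 90\%-halting tower for $n$ starting at $k$ that fails to be an 80\%-agreement tower, I will construct a 60\%-disagreement tower $\langle m_i\rangle$ for $\phi$ by greedy recursion. Set $m_0 = 0$; at stage $i$, apply the hypothesis at $k = m_i$ to extract an $n_i$ and a failing 90\%-halting tower $\langle k_j\rangle$ starting at $m_i$, then choose $m_{i+1}$ to be some later $k_j$ in that tower forcing $\mu\{X : \phi^{X_{m_i,m_{i+1}}} \text{ is multivalued on } n_i\} > 0.6$. Different $n_i$ at different stages is allowed by the ``$\exists n$'' in the definition of a 60\%-disagreement tower.

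Unpacking the definition, a 90\%-halting tower for $n$ fails to be 80\%-agreement in exactly one of two ways: (D) some pair of intervals $[k_{j_0},k_{j_0+1})$ and $[k_{j_1},k_{j_1+1})$ both carry measure $>0.8$ but for opposite values $0$ and $1$; or (A) every interval satisfies $\mu\{\phi=v\}\leq 0.8$ for both $v\in\{0,1\}$. (Any other combination of negations of (1)/(2) in the definition collapses into one of these or is contradictory.) In case (D), I would set $m_{i+1} = \max(k_{j_0+1},k_{j_1+1})$; the two events depend only on disjoint coordinate blocks of $X$, hence are independent and their intersection has measure $>0.8 \cdot 0.8 = 0.64 > 0.6$. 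For any $X$ in that intersection, Observation \ref{moreismore} together with time-independence lifts both outputs $0$ and $1$ from the sub-oracles $X_{k_{j_0},k_{j_0+1}}$ and $X_{k_{j_1},k_{j_1+1}}$ up to $X_{m_i,m_{i+1}}$, making $\phi$ multivalued on $n_i$ there.

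Case (A) is the delicate one. Because the tower is 90\%-halting, the inclusion $\{\phi\downarrow\}_j \subseteq \{\phi=0\}_j \cup \{\phi=1\}_j$ gives $\mu\{\phi=0\}_j + \mu\{\phi=1\}_j > 0.9$, and combined with each being $\leq 0.8$ this forces both $\mu\{\phi=0\}_j\geq 0.1$ and $\mu\{\phi=1\}_j\geq 0.1$ at every block $j$. The events $E^0_j = \{\phi = 0 \text{ on block }j\}$ and $E^1_{j'} = \{\phi = 1 \text{ on block }j'\}$ depend only on the bits of $X$ in their respective (disjoint) blocks, so across distinct indices they are independent. A union bound over $M$ consecutive blocks yields
\[
\mu\bigl\{X : (\exists j \leq M)\, X \in E^0_j \text{ and } (\exists j' \leq M)\, X \in E^1_{j'}\bigr\} \;\geq\; 1 - 2 \cdot 0.9^{M},
\]
which exceeds $0.6$ once $M$ is moderately large (e.g.\ $M=16$). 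For such $X$, Observation \ref{moreismore} again propagates both the $0$ and the $1$ output up to $\phi^{X_{m_i,m_{i+1}}}$, where $m_{i+1} = k_M$, witnessing multivaluedness on $n_i$.

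The main obstacle is the bookkeeping in case (A): one must notice that the constants $0.9$, $0.8$, $0.6$ fit together (via $0.6 < 0.8^2$ in case (D) and via an exponentially small miss probability in case (A), exploiting $0.6 < 1 - 2\cdot 0.9^{M}$ for $M$ large), and that the combination of time-independence and Observation \ref{moreismore} is precisely what allows measure bounds proved on a short sub-block to transfer to the longer block actually used as $[m_i,m_{i+1})$. Once both cases supply a valid $m_{i+1}$, the sequence $\langle m_i\rangle$ is, by construction, a 60\%-disagreement tower for $\phi$.
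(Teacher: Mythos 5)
Your proof is correct and takes essentially the same approach as the paper: the same greedy construction of the $60\%$-disagreement tower, the same split into two cases (no consensus reached vs.\ both consensuses reached), and the same exploitation of independence across disjoint coordinate blocks, with the calculation $0.8^2 > 0.6$ in the double-consensus case and an exponential decay over blocks in the no-consensus case. The only cosmetic difference is in the no-consensus case, where you union-bound the two events ``never outputs $0$'' and ``never outputs $1$'' to get measure $> 1 - 2\cdot 0.9^M$, whereas the paper iteratively tracks the measure of oracles that have not yet produced a conflicting output to get $0.9(1 - 0.9^{j-1})$; both yield the required $> 0.6$.
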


\begin{proof}
  Assume the first clause is false. We construct a 60\%-disagreement
  tower as follows.

  Let $m_0=0$. Choose some $n$ and some
  $\langle k_i:i\in\omega\rangle$ that is a 90\%-halting tower for
  that $n$ starting at $k=m_0$ that is not an 80\%-agreement tower.

  \vspace{5pt}

  \noindent{\bf Case 1:} If it is not an 80\%-agreement because it
  never reaches a consensus (ie, it fails the first clause in the
  definition of 80\%-agreement towers), then choose $j$ to be large
  enough that $(0.9)(1-(0.9)^{j-1})>(0.6)$, and let $m_1=k_j$.
  We claim then that
$$\mu\left\{X:\left(\phi^{X_{m_0,m_{1}}}\text{ is a multivalued function on }n\right)\right\}>0.6.$$
This is because of the following calculation.

Let $S$ be the set of $X$ such that $\phi^{X_{k_0,k_1}}(n)$
halts. Note that $\mu(S)\geq 0.9$, by definition of a 90\% halting
tower.

We show that given any $i>0$, and any $\sigma$ with $|\sigma|=k_i$, if
$\sigma\prec X$ for some $X\in S$, and $\phi^\sigma(n)$ is not
multivalued, then $\phi^\tau(n)$ is multivalued for at least $10\%$ of
all $\tau\succ\sigma$ with $|\tau|=k_{i+1}$. This will show that,
$\phi^\tau(n)$ is multivalued for at least measure
$(0.9)(1-(0.9)^{j-1})$ many $\tau$ of length $k_j$ because each time
we increase $i$, $10\%$ of all strings that have not yet become
multivalued become multivalued, and we started with measure $0.9$ many
strings.

Let $\sigma$ be as above, and note that because $\sigma\prec X$ for
some $X\in S$, we have that $\phi^\sigma(n)$ is defined. By hypothesis
of Case 1, at least 10\% many $X$ give the opposite output from the
rest of the $X$ when computing $\phi^{X_{i,i+1}}$. In particular, this
implies that at least 10\% many $X$ halt and give the opposite output
from $\phi^\sigma(n)$ when computing $\phi^{X_{i,i+1}}$. This
computation does not use any bits of $X$ less than $|\sigma|$, or
greater than $k_{i+1}$, and so 10\% many of all $\tau\succ\sigma$ with
$|\tau|=k_{i+1}$ cause $\phi^\tau(n)$ to halt and give the opposite
output from $\phi^\sigma(n)$. But because $\tau\succ\sigma$,
$\phi^\tau(n)$ also halts and gives the same output as
$\phi^\sigma(n)$, and so $\phi^\tau(n)$ is multivalued.



\vspace{5pt}

\noindent{\bf Case 2:} If the 90\%-halting tower is not an
80\%-agreement tower because it does reach a consensus, but it also
reaches the opposite consensus at some point, then let $j$ be large
enough that both kinds of consensus get reached before $k_j$, and let
$m_1=k_j$. Then at least 80\% of the oracles from the first consensus
arrive at the opposite conclusion with their later information, and
so, in particular, they give a multivalued function if enough of
the oracle is taken to witness both of those computations. We have
that $0.8^2=0.64>0.6$, and so at that stage, at least measure
$0.6$-many oracles produce multivalued functions.

\vspace{5pt}

We then repeat the construction, choosing a potentially new $n$, and a
new 90\%-halting tower beginning at $k=m_1$ to find $m_2$, and then we
repeat with $m_2$, $m_3$ etc.  At each stage, this ensures that for
$60\%$ of all $X$, $\phi^{X_{m_i,m_{i+1}}}(n)$ is multivalued for some
value of $n$.
\end{proof}

\begin{lem}\label{no60}
  Assume that $A$ is 1-random, and that there is some $B$ such that
  for every $k$, $\phi^{A_k}$ is a computation of $B$. Then there is
  no 60\%-disagreement tower for $\phi$.
\end{lem}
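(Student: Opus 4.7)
My plan is to argue by contradiction: assume $\langle m_i : i \in \omega \rangle$ is a 60\%-disagreement tower for $\phi$, which by the preceding observation we may take to be computable, and then exhibit a null $\Pi^0_1$ class containing $A$, contradicting 1-randomness (even weak 1-randomness).

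The plan is to set
\[
V_i \;=\; \bigl\{X : (\exists n)\bigl(\phi^{X_{m_i,m_{i+1}}} \text{ is multivalued on } n\bigr)\bigr\}.
\]
Two features of $V_i$ are crucial. First, the condition inside is a $\Sigma^0_1$ condition in $X$ (the computation $\phi^{X_{m_i,m_{i+1}}}$ must produce two different outputs at some $n$, at some finite stage), and because the $m_i$ are computable, the $V_i$ are uniformly $\Sigma^0_1$ open sets. Second, membership in $V_i$ depends only on the bits of $X$ in the interval $[m_i,m_{i+1})$, because $X_{m_i,m_{i+1}}$ is by definition the partial oracle reading exactly those bits. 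Since the intervals $[m_i,m_{i+1})$ are disjoint, the events $V_i$ are mutually independent under the uniform product measure. By the 60\%-disagreement assumption, $\mu(V_i) > 0.6$ for every $i$.

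Next I would show $A \notin V_i$ for any $i$. Suppose toward contradiction that $\phi^{A_{m_i,m_{i+1}}}(n)$ produced two distinct values $v_0 \neq v_1$ at some $n$. Since $\phi$ is time-independent and the partial oracle $A_{m_i,m_{i+1}}$ agrees with the cofinite oracle $A_{m_i}$ on its (smaller) domain, Observation~\ref{moreismore} gives $\phi^{A_{m_i}}(n) = v_0$ and $\phi^{A_{m_i}}(n) = v_1$. But $A_{m_i}$ is a cofinite oracle for $A$, so by hypothesis $\phi^{A_{m_i}}$ is a single-valued computation of $B$, a contradiction. Hence $A \in \bigcap_i V_i^c$.

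Finally, the intersection $\bigcap_i V_i^c$ is the complement of the single $\Sigma^0_1$ open set $\bigcup_i V_i$ (uniformly $\Sigma^0_1$, hence $\Sigma^0_1$), so $\bigcap_i V_i^c$ is a $\Pi^0_1$ class. By independence of the $V_i$,
\[
\mu\Bigl(\bigcap_i V_i^c\Bigr) \;=\; \prod_i \mu(V_i^c) \;\leq\; \prod_i 0.4 \;=\; 0.
\]
So $A$ lies in a null $\Pi^0_1$ class, contradicting $A$ being 1-random. The main obstacle is conceptual rather than technical: making sure the time-independence of $\phi$ propagates multivaluedness from the ``small'' oracle $A_{m_i,m_{i+1}}$ up to the cofinite oracle $A_{m_i}$ (Observation~\ref{moreismore} goes in exactly the direction we need), and verifying that the block-independence genuinely forces a vanishing product measure. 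Everything else is routine.
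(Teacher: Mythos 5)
Your proof is correct and takes essentially the same route as the paper: define the open sets $V_i$ of oracles made multivalued on block $[m_i,m_{i+1})$, note $A$ avoids each $V_i$ because $\phi^{A_{m_i}}$ must be a single-valued computation of $B$, use block-disjointness for independence, and conclude $A$ lies in a null $\Pi^0_1$ class, contradicting 1-randomness. The only cosmetic difference is that you spell out Observation~\ref{moreismore} to lift multivaluedness from $A_{m_i,m_{i+1}}$ to $A_{m_i}$, where the paper states this more briefly.
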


\begin{proof}
  If it is true that for every $k$, $\phi^{A_k}$ is a computation of
  $B$, then in particular, there is no partial oracle for $A$ that
  causes $\phi$ to be a multi-valued function. Thus, in particular,
  there can be no $m_i,m_{i+1}$ such that $A_{m_i,m_{i+1}}$ is in the
  ``60\% disagreement part'' of the 60\%-disagreement tower.

  Thus, if there existed a 60\%-disagreement tower for $\phi$, then
  $A$ would be in the intersection, over all $i$, of the reals $X$
  such that $\phi^{X_{m_i,m_{i+1}}}$ is not a multivalued function on
  $n$. This is an intersection of uniformly $\Pi^0_1$ sets, and so it
  is a $\Pi^0_1$ set. Furthermore, it is a measure 0 set because each
  one of the sets was at most measure 0.4, and the sets each use
  disjoint parts of the oracle, so the measure of their intersection
  is the product of their measures. A 1-random real $A$ cannot be in a
  $\Pi^0_1$ null set, and so there cannot be a 60\%-disagreement tower
  for $\phi$.
\end{proof}

We are now ready to prove Proposition \ref{1r}, which we restate here for clarity.\\

\noindent{\bf Proposition \ref{1r}.} \emph{If $A$ is 1-random, then
  $\widetilde{\mathcal{R}}(A)$ (and hence $A$) is quasi-minimal in the
  generic degrees.}

\begin{proof}
  Assume $A$ is 1-random. We show that $A$ is quasi-minimal in the
  cofinite degrees, and hence that $\widetilde{\mathcal{R}}(A)$ is
  quasi-minimal in the generic degrees. To prove this, assume that for
  every cofinite oracle $(A)$ for $A$, $\phi^{(A)}$ is a computation
  of $B$. We must prove that $B$ is therefore computable.

  By Lemma \ref{muis1}, for that $\phi$ and for every $n$, and every
  $k$, we must have that $\mu_k=1$. By Lemma \ref{90towers}, we
  therefore have that for every $n$ and $k$, there is a 90\%-halting
  tower starting at $k$. By Lemmas \ref{80or60} and \ref{no60}, we
  have that there exists an $l$ such that every
  90\%-halting tower (for any $n$) starting at $l$ is an
  80\%-agreement tower.

  Fix such an $l$. We now compute $B$ through a ``majority vote''
  trick. To compute $B(n)$, wait until 80\% of all $X$ have the
  property that $\phi^{X_l}(n)\downarrow$, giving the same output,
  then halt and give that output. We must now verify that this will
  happen at some point, and that when it happens, it gives the correct
  output.

  We know this will eventually happen, because there is a 90\%-halting
  tower starting at $l$, and it is an 80\% agreement tower. Thus there
  exist some $v,k_i, k_{i+1}$ such that $l\leq k_i\leq k_{i+1}$, and
  $\mu\{X:\phi^{X_{k_i,k_{i+1}}}(n)=v\}>0.8$. However, for any $X$, if
  $\phi^{X_{k_i,k_{i+1}}}(n)=v$, then $\phi^{X_{l}}(n)=v$, because
  $X_l$ is is a partial oracle extension of $X_{k_i,k_{i+1}}$, and so
  we have that $\mu\{X:\phi^{X_{l}}(n)=v\}>0.8$.

  Furthermore, the $v$ that we find must be the value $v$ of some 80\%
  agreement tower. This is because we may build a 90\% halting tower
  for which $k_0=l$, and $k_1$ is large enough to witness that measure
  80\% many $X$ give output $v$. By assumption, this tower is an 80\%
  agreement tower, and by definition of 80\% agreement tower, no
  ``floor'' of the tower can have 80\% many $X$ give an output other
  than the value $v$ of that tower, and so the $v$ that we found is
  the $v$ of that tower.

  Therefore, by Lemma \ref{80isright}, that $v$ must be $B(n)$, and so
  we have correctly computed $B(n)$ without using $A$ as an oracle.
\end{proof}

\section{Nonuniform generic reducibility}

In this section, we consider nonuniform generic reducibility, which
has the property that the functional $\phi$ is allowed to change
depending on the generic oracle for $A$. This section is joint work
with Denis Hirschfeldt.

\begin{defn}
  \normalfont Let $A$ and $B$ be reals. Then \emph{$B$ is
    non-uniformly generically reducible to $A$} if for every generic
  oracle, $(A)$, for $A$, there exists a Turing functional $\phi$ such
  that $\phi^{(A)}$ is a generic computation of $B$. In this case, we
  write $B\leq_{\ngen}A$.
\end{defn}

Note that the Turing degrees embed into the nonuniform generic degrees
by the same map as the one used to embed them into the uniform generic
degrees, and so we may again define a degree to be quasiminimal if it
is not above any nonzero embedded Turing degrees.  We show weakly
2-randoms are quasiminimal in the nonuniform generic degrees, and also
that 1-generics are quasiminimal in the nonuniform generic
degrees. The inspiration for this section comes from a preprint of
\cite{HJKS}, and the realization that much of their work was
complementary to the work in this paper.

In a preprint of \cite{HJKS}, Hirschfeldt, Jockusch, Kuyper and Schupp
proved that $\Delta^0_2$ 1-randoms are not quasiminimal in the
nonuniform coarse degrees (Corollary 3.11 from \cite{HJKS}), but that
weakly 2-randoms are quasiminimal in the nonuniform coarse degrees
(Corollary 3.3 from \cite{HJKS}). They also asked whether 1-randoms
are quasiminimal in the uniform coarse degrees.

Proposition \ref{1r} answers this, showing that 1-randoms are
quasiminimal in both the uniform coarse and uniform generic
degrees. In the published version of \cite{HJKS}, the authors also
modify their proof of Corollary 3.11 to show that $\Delta^0_2$
1-randoms are not quasiminimal in the nonuniform generic degrees. In
this section, we combine our proof of Proposition \ref{1r} with the
proof of Corollary 3.3 from \cite{HJKS} to prove that weakly 2-randoms
are quasiminimal in the nonuniform generic degrees.

We use the following result, implicitly proved in the proof of Theorem
3.2 of \cite{HJKS}, but stated here in the form that we will use.

\begin{lem}[Hirschfeldt, Jockusch, Kuyper, Schupp
  \cite{HJKS}]\label{HJKSlem}

  Assume $A$ is weakly 2-random, $B$ is noncomputable, and $k>1$. For
  each $i<k$, let $A_{=i}=\{n:kn+i\in A\}$, and let
  $A_{\neq i}=\bigoplus_{j\neq i} A_{=j}$.

  Then $(\exists i<k)(B\nleq_T A_{\neq i})$.

  Furthermore, for every $i$, $A_{=i}$ is 1-random relative to
  $A_{\neq i}$.
\end{lem}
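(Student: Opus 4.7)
For the ``Furthermore'' clause, I would apply van Lambalgen's theorem directly. Since weak $2$-randomness implies $1$-randomness, $A$ is $1$-random, and via a computable permutation of $\omega$ the real $A$ is Turing-equivalent to the join $A_{=i}\oplus A_{\neq i}$. Van Lambalgen's theorem thus yields that $A_{=i}$ is $1$-random relative to $A_{\neq i}$ for each $i<k$.

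For the main claim, I would argue by contradiction: assume $B$ is noncomputable and $B\leq_T A_{\neq i}$ via some Turing functional $\Phi_i$ for every $i<k$, and derive that $B$ is computable. For each $i<k$, the join $A_{=0}\oplus\cdots\oplus A_{=i-1}\oplus B$ is Turing-reducible to $A_{\neq i}$ (the columns $A_{=0},\ldots,A_{=i-1}$ are subreducts of $A_{\neq i}$, and $B\leq_T A_{\neq i}$ by hypothesis), so the ``Furthermore'' clause gives that $A_{=i}$ is $1$-random relative to this join. Invoking the iterated form of van Lambalgen's theorem then yields that $A_{=0}\oplus\cdots\oplus A_{=k-1}\equiv_T A$ is $1$-random relative to $B$.

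To conclude, I would chain three classical results from algorithmic randomness: the Hirschfeldt--Nies characterization of $K$-triviality (any $B$ computable from some $1$-random $A$ that remains $1$-random relative to $B$ must be $K$-trivial), Chaitin's theorem (every $K$-trivial real is $\Delta^0_2$), and Demuth's theorem (a weakly $2$-random real computes no noncomputable $\Delta^0_2$ real). Applying these in turn to our setup---in which $B\leq_T A_{\neq 0}\leq_T A$, $A$ is weakly $2$-random, and $A$ is $1$-random relative to $B$---forces $B$ to be computable, contradicting the noncomputability assumption.

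The main obstacle is marshalling these deep ingredients in precisely the form needed, especially the iterated van Lambalgen step (to pass from columnwise relative randomness to $A$ being $1$-random relative to $B$) and the correct direction of the $K$-triviality characterization; once these are in hand the rest is bookkeeping. A more self-contained alternative would try to show directly that $\{X:\Phi_0^{X_{\neq 0}}=B\}$ has positive measure---using that $A$ is $1$-random relative to $B$ and so avoids $\Pi^0_1[B]$-null classes---and then apply Sacks's theorem; but this is complicated by the class being a priori only $\Pi^0_2[B]$, so the cleaner route is via the $K$-triviality chain above.
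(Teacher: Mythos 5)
Your proposal is correct and tracks the paper's own sketch closely: the ``Furthermore'' clause by van Lambalgen, the main claim by contradiction via a generalized/iterated van Lambalgen argument showing $A$ is $1$-random relative to $B$, and then the chain from base-for-$1$-randomness to $K$-triviality to $\Delta^0_2$ to contradiction with weak $2$-randomness. The one small difference is cosmetic: the paper uses the ``all-columns'' form of generalized van Lambalgen (each $A_{=i}$ is $1$-random relative to $B\oplus A_{\neq i}$, hence $A$ is $1$-random relative to $B$), while you use the telescoping form; both are standard. Two minor notes: the ``weakly $2$-random computes no noncomputable $\Delta^0_2$ set'' result is due to Downey, Nies, Weber, and Yu (as the paper cites), not Demuth; and you don't actually need the intermediate step through Chaitin's theorem, since the Downey--Nies--Weber--Yu result applies to $K$-trivials directly because they are $\Delta^0_2$ --- which is in fact the chain you gave, just stated in one place rather than two.
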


\begin{proof}[Proof (sketch)]
  Assume that for every $i$, $B\leq_T A_{\neq i}$. By a generalized
  form of Van Lambalgen's Theorem \cite{vL}, we have that for every
  $i$, $A_{=i}$ is 1-random relative to $A_{\neq i}$, and so therefore
  relative to $B\oplus A_{\neq i}\equiv_T A_{\neq i}$. By the same
  generalized form of Van Lambalgen's Theorem relativized to $B$, we
  therefore have that $A$ is 1-random relative to $B$.  We also have
  that $B\leq_TA$ (because $B\leq_T A_{\neq i}$), and so we can
  conclude that $B$ is a base for 1-randomness, and hence is
  $K$-trivial \cite{HNS}.

  A weakly 2-random cannot compute any noncomputable $\Delta^0_2$ sets
  \cite{DNWY}, and so cannot compute any noncomputable $K$-trivials.
\end{proof}

\begin{thm}[Cholak, Hirschfeldt, Igusa]\label{nu1r}
  Assume $A$ is weakly 2-random. Then $A$ is quasiminimal in the
  nonuniform generic degrees.

\end{thm}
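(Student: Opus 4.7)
The plan is to argue by contradiction. Suppose $\widetilde{\mathcal{R}}(\mathcal{R}(B)) \leq_{\ngen} A$ for some noncomputable $B$. After composing with the canonical uniform functional from Theorem \ref{embeddings} that totally and correctly extracts $B$ from any generic oracle for $\widetilde{\mathcal{R}}(\mathcal{R}(B))$, I may assume that for every generic oracle $(A)$ for $A$ there is a time-independent Turing functional $\phi$ with $\phi^{(A)}(n)=B(n)$ for all $n$. Apply Lemma \ref{HJKSlem} with $k=2$ to choose $i<2$ so that $B \nleq_T A_{\neq i}$ while $A_{=i}$ is $1$-random relative to $A_{\neq i}$; the goal is then to contradict the choice of $i$ by producing a Turing reduction $B \leq_T A_{\neq i}$.

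For each index $e$, let $\Psi_e$ be the $A_{\neq i}$-computable time-independent functional which, on input a partial oracle $X$ (viewed as a partial oracle for $A_{=i}$), merges $X$ with the known full oracle $A_{\neq i}$ to form a partial oracle for $A$ and then runs $\phi_e$. Every cofinite oracle $(A_{=i})$ for $A_{=i}$ merges with $A_{\neq i}$ to a cofinite (hence generic) oracle for $A$, so the nonuniform reduction ensures that for every cofinite $(A_{=i})$ some $e$ yields $\Psi_e^{(A_{=i})}$ a total correct computation of $B$. This reduces the theorem to producing, uniformly in $A_{\neq i}$, a procedure that recovers $B$ from the $\Psi_e$'s.

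The core of the proof is to replay the argument of Proposition \ref{1r}, now relativized to $A_{\neq i}$ and applied to the functionals $\Psi_e$. For a fixed $e$, if $\Psi_e$ satisfies the relativized hypothesis of Lemma \ref{muis1} and has no relativized $60\%$-disagreement tower, then by the relativized Lemma \ref{80or60} there is an $l$ so that every $90\%$-halting tower starting at $l$ is an $80\%$-agreement tower, and the majority-vote procedure of Proposition \ref{1r} produces an $A_{\neq i}$-computable function $f_e$; by the relativized Lemma \ref{80isright} together with Observation \ref{moreismore}, $f_e(n)=B(n)$ for every $n$. If instead $\Psi_e$ fails one of these two conditions, then (as in the proofs of Lemma \ref{muis1} and Lemma \ref{no60}) the hypothesis that $\Psi_e$ works on all cofinite oracles for $A_{=i}$ forces $A_{=i}$ into an explicit null $\Pi^0_1[A_{\neq i}]$ class $N_e$. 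The $N_e$ are uniformly $\Pi^0_1[A_{\neq i}]$, so $\bigcup_e N_e$ is a $\Sigma^0_2[A_{\neq i}]$ null class. A van Lambalgen-style relativization upgrades the weak $2$-randomness of $A$ to weak $2$-randomness of $A_{=i}$ relative to $A_{\neq i}$, so $A_{=i}$ avoids $\bigcup_e N_e$ and thus some $\Psi_e$ survives the Proposition \ref{1r} analysis, yielding $B \leq_T A_{\neq i}$.

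The hardest step is the passage from the nonuniform data ``for each cofinite $(A_{=i})$ some $\Psi_e$ works'' to the uniform conclusion ``some single $\Psi_e$ is amenable to the Proposition \ref{1r} machinery for our specific $A_{=i}$''. This is precisely where weak $2$-randomness is stronger than the plain $1$-randomness used in the uniform case of Proposition \ref{1r}: $1$-randomness of $A_{=i}$ over $A_{\neq i}$ lets $A_{=i}$ dodge any single $\Pi^0_1[A_{\neq i}]$ null class $N_e$, but dodging the countable union $\bigcup_e N_e$ at once (one null class per candidate index) requires the extra strength of weak $2$-randomness, which is exactly what the hypothesis on $A$ provides via the van Lambalgen-style relativization.
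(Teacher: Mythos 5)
Your proposal has a genuine gap at the central step. You write that ``if $\Psi_e$ fails one of these two conditions, then \ldots the hypothesis that $\Psi_e$ works on all cofinite oracles for $A_{=i}$ forces $A_{=i}$ into an explicit null $\Pi^0_1[A_{\neq i}]$ class $N_e$.'' But you do not have that hypothesis: in the nonuniform setting you only know that for each cofinite oracle $(A_{=i})$ \emph{some} index works, not that a fixed $\Psi_e$ works on all of them. When a given $\Psi_e$ fails the machinery of Proposition \ref{1r}, by far the most typical reason is simply that $\Psi_e$ fails to halt (or is multivalued, or gives a wrong answer) on \emph{some} cofinite oracle for $A_{=i}$ --- and this places $A_{=i}$ in no null class whatsoever. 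So the intended dichotomy ``either $\Psi_e$ succeeds via the majority vote, or $A_{=i}\in N_e$'' is false, and there is no countable family of null classes whose avoidance rules out the scenario where every index is a ``wrong'' index for some oracle. This is precisely the problem the paper's proof solves with forcing: it builds a poset of finite partial-oracle approximations, fixes a generic $G$ and a condition $p=\langle\sigma,\epsilon\rangle$ forcing a single $\phi$ to compute $B$ from $G$, and chooses $k$ with $\frac1k<\epsilon$ so that every cofinite oracle for $A_{=i}$, merged with $A_{\neq i}$ beyond the stem $\sigma$, produces an extension of $p$ and is therefore \emph{forced} to yield a correct computation of $B$ via that one $\phi$. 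That forcing step is exactly what upgrades the nonuniform data into the uniform hypothesis that the Proposition \ref{1r} analysis needs, and you have omitted it; your fixed choice $k=2$ is a symptom of this, since $k$ must in fact be tuned to the density bound $\epsilon$ coming from the condition $p$.

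A secondary error is your explanation of where weak $2$-randomness is used. If the $N_e$ really were $\Pi^0_1[A_{\neq i}]$ null classes, then $1$-randomness relative to $A_{\neq i}$ (already supplied by Lemma \ref{HJKSlem}) would let $A_{=i}$ avoid each $N_e$ individually and hence avoid the union; no extra strength is needed to handle a countable union of classes each of which is individually avoided. Weak $2$-randomness enters the argument not to dodge a $\Sigma^0_2$ union but inside Lemma \ref{HJKSlem} itself, whose proof uses the fact that a weakly $2$-random cannot compute a noncomputable $K$-trivial (together with a generalized van Lambalgen theorem to manufacture a base for $1$-randomness). Also, as a smaller point, the null class produced by the proof of Lemma \ref{muis1} is the intersection of a Martin--L\"of test and so is $\Pi^0_2$, not $\Pi^0_1$; only the class from Lemma \ref{no60} is $\Pi^0_1$.
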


The proof will use a relativized version of Proposition \ref{1r}. The
proof of Proposition \ref{1r} made ample use of uniformity, so we
begin our argument with a forcing argument that will allow us to
reduce the question to a uniform question. The uniform question will
then be answered using Lemma \ref{HJKSlem} and Proposition \ref{1r}.

Note also that in in Section 2 we were working with uniform generic
reducibility, and so we do not have access to Observation
\ref{canusetimeindependent}, which said that we may use
time-independent functionals, and hence ignore the time dependence in
our partial oracles. Because of this we will work directly with
time-dependent partial oracles. In subsequent work Astor, Hirshfeldt,
and Jockusch generalize the proof of Theorem \ref{nu1r} to prove a
full analogue of Observation \ref{canusetimeindependent}, which would
simplify many of the steps of our proof.



We remind the reader that a partial oracle is coded as a set of
ordered triples $\langle n,x,l\rangle$, with $n$ as the input, $x$ as
the output, and $l$ as the number of steps required for the oracle to
halt. As such, we will use the following nonuniform version of Lemma
\ref{qmlemma}.

\begin{lem}\label{qmlemma2}
  Assume $A$ is not quasiminimal in the nonuniform generic degrees
  degrees. Then there exists a noncomputable real $B$ such that for
  any generic oracle $(A)$, for $A$, $B\leq_T(A)$.

\end{lem}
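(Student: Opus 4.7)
The plan is to follow the template of Lemma \ref{qmlemma}, unpacking the definition of quasiminimality to obtain a noncomputable $B$ and then converting the resulting generic reduction into a Turing reduction from the partial oracle. Since $A$ is not quasiminimal in the nonuniform generic degrees, the definition provides a noncomputable real $X_0$ such that $\widetilde{\mathcal{R}}(\mathcal{R}(X_0)) \leq_{\ngen} A$; I will take $B = X_0$.

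Fix any generic oracle $(A)$ for $A$. Nonuniform generic reducibility supplies a Turing functional $\phi$ (allowed to depend on $(A)$) such that $\phi^{(A)}$ is a generic computation of $\widetilde{\mathcal{R}}(\mathcal{R}(X_0))$. Since $(A)$ is a set of triples $\langle n',x,l\rangle$, it serves equally well as a Turing oracle: a partial-oracle query $(A)(n')$ inside $\phi$ amounts to searching $(A)$ for a suitable triple, which is effective in $(A)$ as a set. Thus $\phi^{(A)}$, viewed as a partial function, is already computable from $(A)$ in the ordinary Turing sense. The remaining task is to upgrade this into a total Turing computation of $X_0$.

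The key is the extreme redundancy of the embedding $\widetilde{\mathcal{R}} \circ \mathcal{R}$: for every $n$ and every $m \geq 0$, the entire block $P_{(2m+1)2^n} = [2^{(2m+1)2^n}, 2^{(2m+1)2^n+1})$ of $\widetilde{\mathcal{R}}(\mathcal{R}(X_0))$ is constantly equal to $X_0(n)$. The Turing algorithm hardcodes the $\phi$ associated to $(A)$ and, on input $n$, diagonally enumerates pairs $(m,j)$ with $j \in P_{(2m+1)2^n}$, simulating $\phi^{(A)}(j)$ in parallel and outputting the first value to converge. Termination holds because $\dom(\phi^{(A)})$ is density-$1$, which forces cofinitely many of the blocks $P_{(2m+1)2^n}$ to meet $\dom(\phi^{(A)})$ (otherwise the complement of the domain would have positive density, since the $(2m+1)2^n$ are of positive density in $\omega$ and the blocks $P_k$ have length $2^k$). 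Correctness follows because generic computations never err (Observation \ref{notwrong}), so any convergent $\phi^{(A)}(j)$ returns the correct bit of $\widetilde{\mathcal{R}}(\mathcal{R}(X_0))$, which equals $X_0(n)$ by the encoding.

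The main conceptual point to keep straight is the dual role of $(A)$: on the one hand, as a partial oracle driving the inner computation $\phi^{(A)}$; on the other hand, as a plain set serving as the Turing oracle for the outer reduction. There is no real obstacle beyond this bookkeeping: compared with Lemma \ref{qmlemma}, we neither need Observation \ref{canusetimeindependent} to strip time-dependence, nor need to choose $\phi$ uniformly from $(A)$, because the nonuniform setting lets us use whichever $\phi$ happens to witness the reduction for the given $(A)$.
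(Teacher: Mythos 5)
Your proof is correct and follows essentially the same route as the paper: unpack quasiminimality to get a noncomputable $B$ with $\widetilde{\mathcal{R}}(\mathcal{R}(B))\leq_{\ngen}A$, note that for any fixed $(A)$ the nonuniform reduction yields a generic computation of $\widetilde{\mathcal{R}}(\mathcal{R}(B))$ that is Turing below $(A)$ as a set, and then recover $B$ from that generic description. The paper treats the last step as a black box (``every generic oracle for $\widetilde{\mathcal{R}}(\mathcal{R}(B))$ can compute $B$,'' already explained after Theorem \ref{embeddings}), whereas you spell out the decoding algorithm explicitly; the decoding is right, though your parenthetical justification for termination is a bit loose --- the cleanest reason is that if every block $P_{(2m+1)2^n}$ (for a fixed $n$) missed $\dom(\phi^{(A)})$, the density of $\dom(\phi^{(A)})$ would be bounded by $1/2$ at the right endpoint $2^{(2m+1)2^n+1}$ of each such block, contradicting density $1$.
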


\begin{proof}[Proof (Lemma \ref{qmlemma2})]
  By definition of quasiminimality, if $A$ is not quasiminimal, then
  there is a noncomputable $B$ such that
  $\widetilde{\mathcal{R}}(\mathcal{R}(B))\leq_{\ngen}A$. So
  $\widetilde{\mathcal{R}}(\mathcal{R}(B))$ is generically computable
  from every generic oracle for $A$. But every generic oracle for
  $\widetilde{\mathcal{R}}(\mathcal{R}(B))$ can compute $B$, and so
  $B$ is computable from every generic oracle for $A$.
\end{proof}

\begin{proof}[Proof (Theorem \ref{nu1r})]
  First, we construct a forcing poset that will allow us to define a
  generic generic oracle, $G$ for $A$. The poset $\mathcal{P}$ will
  consist of finite approximations to partial oracles for $A$,
  together with a restriction saying that in the future, the partial
  oracles will need to have domain at least a certain size. In order
  to ensure that generic generic oracles are not total, $p$ will
  determine the entire behavior of $G\upharpoonright m$ for some $m$,
  so in particular, extensions of $p$ will not be allowed to halt at
  locations smaller than $m$.

  A \emph{finite partial oracle} $\sigma$ for $A$ is given by a number
  $m=|\sigma|$ and a subset of $m\times 2\times\omega$ which, thought
  of as a subset of $\omega\times 2\times\omega$ would be a partial
  oracle for $A$. For $n<m$, $\sigma\upharpoonright n$ is a shorthand
  for the partial oracle $\tau$ such that $|\tau|=n$ and for $k<n$,
  $\langle k,x,l\rangle\in\tau\leftrightarrow \langle
  k,x,l\rangle\in\sigma$.
  As with other partial oracles,
  $\dom(\sigma)=\{n<|\sigma|: \exists x\exists l\langle
  n,x,l\rangle\in\sigma\}$.

  We define the poset $\mathcal{P}$ to be the set of ordered pairs
  $\langle\sigma,\epsilon\rangle$ such that $\sigma$ is a partial
  oracle for $A$, $\epsilon>0$, and
  $\frac{\dom(\sigma)}{|\sigma|}> 1-\epsilon$. Given conditions
  $p=\langle\sigma,\epsilon\rangle$, and
  $q=\langle\tau,\delta\rangle$, we say that $q\leq p$ if
  $|\tau|\geq |\sigma|$, 
  $ \tau \restriction |\sigma| = \sigma$
$\delta\leq\epsilon$, and for all $n$, if
  $|\sigma|\leq n\leq|\tau|$, then
  $\frac{\dom(\tau\upharpoonright n)}{n}>1-\epsilon$.

  A generic generic oracle $G$ for $A$ is given by taking a
  sufficiently generic filter $\widetilde{G}$ for $\mathcal{P}$ and
  letting
  $G=\bigcup_{\langle \sigma,\epsilon\rangle\in
    \widetilde{G}}\sigma$.
  Note that it is dense to decrease $\epsilon$ below any positive
  number, and so a generic generic oracle for $A$ is a generic oracle
  for $A$.

  In this proof, we only use genericity of $\widetilde{G}$ for two purposes: ensuring that $G$ is a generic oracle, and ensuring that, given an arbitrary $\phi$, if $\phi^G$ is a computation of $B$, then there is a condition $p\in\widetilde{G}$ that forces that $\phi^G$ is a computation of $B$. We do not wish to explicitly count quantifiers, but $\mathcal{P}$ is $A$-computable, so some small level of genericity relative to $A\oplus B$ is sufficient.\\

  So, let $A$ be weakly 2-random, and assume that $A$ is not
  quasiminimal. By Lemma \ref{qmlemma2}, fix $B$ noncomputable such
  that $B\leq_T(A)$ for every generic oracle $(A)$ for $A$. Let $G$ be
  a generic generic oracle for $A$, and fix $\phi$ such that $\phi^G$
  is a computation of $B$. Fix a condition
  $p=\langle\sigma,\epsilon\rangle$ that forces that $\phi^G$ is a
  computation of $B$.  Fix $k$ such that $\frac1k<\epsilon$. By Lemma
  \ref{HJKSlem}, fix $i<k$ such that $B\nleq_T A_{\neq i}$.

  We then claim that relativized to $A_{\neq i}$, $B$ is uniformly
  computable from an arbitrary cofinite oracle for $A_{=i}$, and also
  that, relative to $A_{\neq i}$, $A_{=i}$ is 1-random, contradicting
  Proposition \ref{1r} relativized to $A_{\neq i}$.

  \vspace{5pt}

  \noindent Proof of Claim: Let $X$ be an arbitrary cofinite oracle
  for $A_{=i}$. Let $\mathcal{F}(X)$ be the cofinite oracle for $A$
  defined as follows.

  For $m\geq |\sigma|$, let
  $S_m=\{n:(n\in\dom(\sigma))\vee(|\sigma|\leq n< m)\vee(n\geq m\ \&\
  n\not\equiv i \mod k)\}$.
  Choose $m_0$ sufficiently large that for all $m\geq \dom(\sigma)$,
  $\frac{|S_{m_0}\upharpoonright m|}{m} >1-\epsilon$. (Such an $m_0$
  exists because $k$ was chosen so that $\frac1k<\epsilon$.)

  Let $\mathcal{F}(X)$ be the cofinite oracle for $A$ that agrees with
  $\sigma$ on $|\sigma|$, that halts immediately on all $m$ between
  $|\sigma|$ and $m_0$, that halts immediately on all $m\geq m_0$ if
  $m\not\equiv i \mod k$, and so that if $m>m_0$ and
  $m\equiv i\mod k$, then $\mathcal{F}(X)(m)=X(\frac{m-i}{k})$
  (halting if and only if $X$ halts, giving the same output if it does
  halt, and halting with the same $l$ value).

  Finally, we define $\psi$ so that $\psi^{X\oplus A_{\neq i}}(n)$
  searches for a partial oracle $Y$ such that:
  \begin{itemize}

  \item $Y\upharpoonright |\sigma|=\sigma$

  \item $\dom(Y)\subseteq\dom(\mathcal{F}(X))$

  \item for each $m\in\dom(Y)$, $Y(m)=\mathcal{F}(X)(m)$


  \item $\phi^Y(n)\downarrow$

\end{itemize}
and when it finds such a $Y$, then
$\psi^{X\oplus A_{\neq i}}(n)\downarrow=\phi^Y(n)$.

(In essence, $\psi^{X\oplus A_{\neq i}}$ is almost a time-independent
version of $\phi^{\mathcal{F}(X)}$, except that it restricts its
attention only to $Y$'s that look potentially like extensions of $p$.)

\vspace{5pt}

It remains to show that $\psi^{X\oplus A_{\neq i}}$ is total, and that
it is correct about $B$ wherever it halts. (Note that showing that
$\psi^{X\oplus A_{\neq i}}$ is correct whenever it halts will also
show that it is not multivalued.)

\vspace{5pt}

To show that if $\psi^{X\oplus A_{\neq i}}(n)\downarrow$ then
$\psi^{X\oplus A_{\neq i}}(n)=B(n)$, let $Y$ be as above. Define
$q=\langle\tau,\epsilon\rangle$ where $\tau$ is defined as the finite
partial oracle for $A$ that agrees with $\sigma$ on $|\sigma|$, that
agrees with the portion of $Y$ that is queried in the computation of
$\phi^Y(n)$, and that ``halts late'' at all locations larger than
$|\sigma|$ if $Y$ was queried at that location, but $Y$ was not seen
to halt at that location.

Here, ``halting late'' means that $\tau$ halts at those locations, but
with an $l$ value larger than any $l$ value queried in the computation
of $\phi^Y(n)$. This ensures that $\tau$ agrees with the portion of
$Y$ that was queried while also having a large enough domain to not
violate the $\epsilon$ condition imposed by $p$.

Then $q\Vdash\phi^G(n)=\phi^Y(n)$, because $q$ agrees with the portion
of $Y$ used in the computation. But also that $q\leq p$, and so
$q\Vdash \phi^G(n)=B(n)$. Therefore $\phi^Y(n)=B(n)$. This proof was
shown for an arbitrary $Y$ as above, and so we have that if
$\psi^{X\oplus A_{\neq i}}(n)\downarrow$ then
$\psi^{X\oplus A_{\neq i}}(n)=B(n)$.



\vspace{5pt}

To show that if $\psi^{X\oplus A_{\neq i}}$ is total, we show that
there is a generic generic oracle $G_0$ for $A$, extending $p$, whose
domain is contained in $\dom(\mathcal{F}(X))$, and so, for every $n$,
$G_0$ will be found as one of the $Y$ as above. From this, because
$p\Vdash \phi^G$ is total, we will have that for every $n$,
$\phi^{G_0}(n)\downarrow$, and so
$\psi^{X\oplus A_{\neq i}}(n)\downarrow$.

To show that there exists such a $G_0$, let $m_1$ be the largest
number such that $m_1\notin\dom(\mathcal{F}(X))$. Let $\tau$ be
defined as the finite partial oracle
$\mathcal{F}(X)\upharpoonright m_1+1$, and let
$q=\langle \tau,\epsilon\rangle$. By construction of $\mathcal{F}$, we
have that $q\leq p$. Let $G_0$ be any generic 
generic oracle for $A$
extending $q$. Then $G_0$ extends $p$, and its domain is contained in
$\dom(\mathcal{F}(X))$ because its domain restricted to $m_1+1$ is
equal to the domain of $\mathcal{F}(X)$ restricted to $m_1+1$, and
$\mathcal{F}(X)$ is a total oracle past $m_1+1$.
\end{proof}

\noindent{\bf Remark.} Combining Proposition \ref{1r}, and Theorem
\ref{nu1r}, with Corollaries 3.3, 3.11, and 3.14 from \cite{HJKS},
provides the following characterization of the level of randomness
required for to ensure quasiminimality in the uniform or nonuniform
cofinite, mod-finite, coarse, or generic degrees:

\begin{thm}
  In the uniform coarse and generic degrees, and also in the cofinite,
  and mod-finite degrees, every 1-random is quasiminimal.

  In the nonuniform coarse or generic degrees, every weakly 2-random
  is quasiminimal, but there exist 1-randoms (any 1-random that is
  also $\Delta^0_2$) which are not quasiminimal.

\end{thm}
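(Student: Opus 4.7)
The plan is to assemble this as a straightforward synthesis of the preceding results in this paper together with the cited results from \cite{HJKS}, since every individual clause has already been established. I would state the proof as four short observations, one per clause.

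First, for the uniform claim, I would invoke Proposition \ref{1r} directly: it says that if $A$ is 1-random then $A$ is quasiminimal in the cofinite degrees, and hence (by Proposition \ref{qmpropagation}) in the mod-finite, generic, and coarse degrees. So all four uniform structures are covered by a single citation, and nothing further is needed for the first sentence of the theorem.

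Second, for the positive half of the nonuniform claim, I would handle the generic case by citing Theorem \ref{nu1r} of this paper, which says exactly that every weakly 2-random is quasiminimal in the nonuniform generic degrees. For the coarse case, I would cite Corollary 3.3 of \cite{HJKS}, which gives the analogous statement in the nonuniform coarse degrees. (There does not seem to be a black-box propagation argument from one nonuniform structure to the other that would let us deduce coarse from generic or vice-versa, so both citations are genuinely needed.)

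Third, for the negative half, I would cite Corollary 3.11 of \cite{HJKS} for the nonuniform coarse degrees and Corollary 3.14 of \cite{HJKS} for the nonuniform generic degrees: both assert that any $\Delta^0_2$ 1-random witnesses non-quasiminimality, so the existence of such reals (for instance, any $\Delta^0_2$ 1-random produced by Kučera's construction) completes the claim. There is no real obstacle here beyond correctly matching each sub-statement to the corresponding proposition; the only place where one must pay attention is in making sure that the gap between weakly 2-random and $\Delta^0_2$ 1-random is genuinely what separates quasiminimality from non-quasiminimality in the nonuniform setting, and this is exactly the content of the two halves of Theorem 3.2 / Corollary 3.11 of \cite{HJKS}. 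The proof of the theorem is therefore purely a bookkeeping exercise, and I would present it as such.
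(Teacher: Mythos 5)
Your proposal is correct and matches the paper's approach. The paper presents this theorem as a direct consequence of Proposition \ref{1r} (via Proposition \ref{qmpropagation} for propagation to the other uniform structures), Theorem \ref{nu1r}, and Corollaries 3.3, 3.11, and 3.14 of \cite{HJKS}, exactly the bookkeeping you describe.
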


In light of this, one might ask whether this provides a
characterization of the weakly 2-randoms, but it does not for a fairly
trivial reason:

\begin{obs}
  There exists a 1-random $A$ that is not weakly 2-random that is
  quasiminimal in both the nonuniform coarse and generic degrees.

\end{obs}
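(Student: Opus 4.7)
The plan is to observe that the proofs of Theorem~\ref{nu1r} and of the analogous nonuniform-coarse quasiminimality result from \cite{HJKS} use the hypothesis of weak 2-randomness only through a single application of Lemma~\ref{HJKSlem}, whose sketch uses weak 2-randomness only to conclude that $A$ computes no noncomputable $K$-trivial (via the Hirschfeldt--Nies--Stephan characterization of bases for 1-randomness together with \cite{DNWY}). Consequently, those arguments in fact establish the stronger statement: if $A$ is 1-random and Turing-computes no noncomputable $K$-trivial, then $A$ is quasiminimal in both the nonuniform coarse and nonuniform generic degrees. My first step would be to write this observation out as a separate lemma by literally rereading the proof of Lemma~\ref{HJKSlem} and noting which inequality actually invokes the weak-2-random hypothesis.

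Given that strengthening, it remains to exhibit a 1-random real $A$ that is not weakly 2-random but that Turing-computes no noncomputable $K$-trivial. By \cite{DNWY}, failure of weak 2-randomness for a 1-random is equivalent to Turing-computing some noncomputable $\Delta^0_2$ set. So I would look for a 1-random that computes some noncomputable $\Delta^0_2$ set while remaining below every noncomputable $K$-trivial only trivially. Such reals can be obtained by standard methods in algorithmic randomness: apply a suitable basis theorem (for instance cone-avoidance, or the hyperimmune-free basis theorem) to a $\Pi^0_1$ subclass of the 1-randoms chosen so as to compute a preselected noncomputable, non-$K$-trivial $\Delta^0_2$ set, while simultaneously avoiding the upper cones of all noncomputable $K$-trivials (of which there are only countably many, all $\Delta^0_2$, so the avoidance is a standard diagonalization inside the relevant basis argument).

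With such $A$ in hand the observation is immediate: $A$ is 1-random by construction, $A$ is not weakly 2-random since it computes a noncomputable $\Delta^0_2$ set, and $A$ is quasiminimal in both the nonuniform coarse and nonuniform generic degrees by the strengthened form of Theorem~\ref{nu1r} and the analogous coarse result in \cite{HJKS}. The main obstacle is the first step: one must be careful to check that the proof of Lemma~\ref{HJKSlem} and the ensuing quasiminimality proofs really use only the ``no noncomputable $K$-trivial is computed'' consequence of weak 2-randomness, and not some more delicate feature such as the absence of a noncomputable $\Delta^0_2$ lower bound in the Turing degrees. Once that bookkeeping is verified, the rest of the argument is a direct appeal to existing algorithmic-randomness machinery.
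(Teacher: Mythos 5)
Your proposal diverges substantially from the paper's proof, and it has a concrete gap in its second step. The paper's proof is a one-paragraph construction: take $B$ 1-random but not weakly 2-random, take $C$ weakly 2-random relative to $B$, and form the asymmetric join
$A=\{2^n:n\in B\}\cup\big(C\setminus\{2^n:n\in\omega\}\big)$. Then $A\geq_T B$ (so $A$ is not weakly 2-random), $A$ is 1-random by Van Lambalgen, and --- crucially --- $A$ differs from $C$ only on the density-zero set of powers of $2$, so $A$ is coarsely and generically equivalent to $C$. Quasiminimality in the nonuniform coarse and generic degrees then follows for $A$ directly from the known result that the weakly 2-random $C$ is quasiminimal. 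Nothing about $K$-trivials or the internal structure of the proof of Theorem~\ref{nu1r} is needed.

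Your first step --- observing that Lemma~\ref{HJKSlem} uses weak 2-randomness only to conclude that $A$ computes no noncomputable $K$-trivial, so that Theorem~\ref{nu1r} really applies to any 1-random with that property --- is a correct reading of the proof. The problem is the second step. There is no $\Pi^0_1$ class of 1-randoms every member of which computes a fixed noncomputable $\Delta^0_2$ set $D$: Ku\v cera--G\'acs coding produces, for each $D$, some 1-random computing $D$, but the class of 1-randoms coding $D$ under a fixed coding is not $\Pi^0_1$ (it depends essentially on $D$, which is not computable), and in a genuine $\Pi^0_1$ class of 1-randoms different members code different reals. The hyperimmune-free basis theorem is even worse suited here, since hyperimmune-free 1-randoms are weakly 2-random, the opposite of what you want. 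So the basis-theorem construction you sketch does not produce the witness. (Whether such a 1-random --- not weakly 2-random but bounding no noncomputable $K$-trivial --- even exists is itself a nontrivial fact; it does, but establishing it cleanly leads into the Oberwolfach-randomness analysis of which 1-randoms compute $K$-trivials, which is far heavier machinery than the paper needs.) The asymmetric-join argument avoids all of this: it never asks whether $A$ computes $K$-trivials, only that $A$ lies in the same coarse and generic degree as a weakly 2-random.
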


\begin{proof}[Proof (sketch).]
  Let $B$ be 1-random but not weakly 2-random, and let $C$ be weakly
  2-random relative to $B$. Let $A$ be the asymmetric join of $B$ and
  $C$ defined by
  $A=\{2^n:n\in B\}\cup\big(C\setminus\{2^n:n\in\omega\}\big)$.

  Note then that $A$ is 1-random but not weakly 2-random, but that $A$
  is coarsely (and generically) equivalent to $C$, and so quasiminimal
  in the nonuniform coarse (and generic) degrees.
\end{proof}

We observe now that our proof of Theorem \ref{nu1r} allows us to also
prove that 1-generics are quasiminimal in the nonuniform generic
degrees. The following analogue of Lemma \ref{HJKSlem} is proved
implicitly in the proof of Theorem 4.2 of \cite{HJKS}.

\begin{lem}[Hirschfeldt, Jockusch, Kuyper, Schupp
  \cite{HJKS}]\label{HJKSlem2}

  Assume $A$ is 1-generic, $B$ is noncomputable, and $k>1$. For each
  $i<k$, let $A_{=i}=\{n:kn+i\in A\}$, and let
  $A_{\neq i}=\bigoplus_{j\neq i} A_{=j}$.

  Then $(\exists i<k)(B\nleq_T A_{\neq i})$.

  Furthermore, for every $i$, $A_{=i}$ is 1-generic relative to
  $A_{\neq i}$.
\end{lem}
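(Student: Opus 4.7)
The plan is to mirror the proof structure of Lemma \ref{HJKSlem}, with the randomness arguments replaced by their 1-genericity analogues. The ``Furthermore'' clause is an instance of Van Lambalgen's theorem for 1-generics (due to Yu): if $A$ is 1-generic, then $A_{=i}$ is 1-generic relative to $A_{\neq i}$ for each $i<k$. I would invoke this as a cited black box, just as the proof of Lemma \ref{HJKSlem} invokes the Van Lambalgen theorem for 1-randoms.

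For the main claim I would argue by contradiction. Assume $B\leq_T A_{\neq i}$ for every $i<k$. By the ``Furthermore'' clause each $A_{=i}$ is 1-generic relative to $A_{\neq i}$, and since $B\leq_T A_{\neq i}$ we have $B\oplus A_{\neq i}\equiv_T A_{\neq i}$, so $A_{=i}$ is 1-generic relative to $B\oplus A_{\neq i}$. Applying the converse direction of Van Lambalgen for 1-generics relative to $B$ (joining mutually relatively generic columns), $A=\bigoplus_{i<k}A_{=i}$ is 1-generic relative to $B$. Combined with $B\leq_T A_{\neq 0}\leq_T A$, this reduces the argument to the following key claim, which plays the role that ``$B$ is a base for $1$-randomness, hence $K$-trivial'' played in the random case:

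\emph{Claim.} If $A$ is 1-generic relative to $B$ and $B=\Phi^A$ for some Turing functional $\Phi$, then $B$ is computable.

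To prove the claim, consider the set $S=\{\sigma\in 2^{<\omega}:\exists n\,\Phi^\sigma(n)\downarrow\neq B(n)\}$, which is $\Sigma^{0,B}_1$. If some $\sigma\prec A$ were in $S$, then $\Phi^A(n)=\Phi^\sigma(n)\neq B(n)$, contradicting $B=\Phi^A$. Hence by 1-genericity of $A$ relative to $B$ there must be some $\tau\prec A$ with no extension in $S$. Then for every $\sigma\succeq\tau$ and every $n$, $\Phi^\sigma(n)\downarrow$ forces $\Phi^\sigma(n)=B(n)$. So $B$ is computable from $\tau$ via the algorithm: on input $n$, search for any $\sigma\succeq\tau$ with $\Phi^\sigma(n)\downarrow$ and output $\Phi^\sigma(n)$. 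Such a $\sigma$ exists since $\Phi^A(n)\downarrow$ and $A\succeq\tau$, and the output equals $B(n)$ by the defining property of $\tau$. This contradicts the noncomputability of $B$.

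The main technical input is Van Lambalgen for 1-generics in the right multi-column form; once that is in hand, the remainder is the straightforward density-versus-avoidance dichotomy that substitutes for the $K$-triviality machinery in the proof of Lemma \ref{HJKSlem}.
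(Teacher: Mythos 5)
Your proposal matches the paper's proof: the paper's one-line sketch says precisely to replace the generalized Van Lambalgen theorem with Yu's theorem for 1-generics, and to replace the $K$-triviality step with the fact that if $A$ is 1-generic relative to a noncomputable $B$ then $B\nleq_T A$. Your ``Claim'' is just a (correct) proof of that last fact via the standard density-or-avoidance dichotomy, so the only difference is that you spell out a detail the paper leaves as a citation.
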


\begin{proof}[Proof (sketch)]
  A theorem of Yu \cite{Y} replaces the generalized form of Van
  Lambalgen's Theorem that is used in the proof of Lemma
  \ref{HJKSlem}, and $K$-triviality is not needed because if $A$ is
  1-generic relative to a noncomputable $B$, then $B\nleq_TA$.
\end{proof}

\begin{prop}[Cholak, Hirschfeldt, Igusa]\label{nu1g}
  Assume $A$ is 1-generic. Then $A$ is quasiminimal in the nonuniform
  generic degrees.

\end{prop}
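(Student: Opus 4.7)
The plan is to follow the proof of Theorem \ref{nu1r} essentially verbatim, substituting Lemma \ref{HJKSlem2} for Lemma \ref{HJKSlem} and Proposition \ref{1g} for Proposition \ref{1r} at the appropriate points. Observe that the forcing construction and the uniform reduction built in the proof of Theorem \ref{nu1r} never used randomness of $A$ directly; randomness enters only at the very end, to contradict Proposition \ref{1r} relative to $A_{\neq i}$. This leaves a clear template for the 1-generic case.

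Concretely, I would begin by setting up the same poset $\mathcal{P}$ of pairs $\langle \sigma, \epsilon \rangle$ (finite partial oracle for $A$ with density demand $\epsilon$), and let $\widetilde{G}$ be a filter sufficiently generic relative to $A \oplus B$, giving rise to a generic generic oracle $G$ for $A$. Assuming toward contradiction that $A$ is not quasiminimal in the nonuniform generic degrees, Lemma \ref{qmlemma2} supplies a noncomputable $B$ with $B \leq_T (A)$ for every generic oracle $(A)$ for $A$. Pick $\phi$ such that $\phi^G$ computes $B$, and by genericity pick a condition $p = \langle \sigma, \epsilon \rangle \in \widetilde{G}$ forcing this. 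Choose $k$ with $1/k < \epsilon$, and apply Lemma \ref{HJKSlem2} to obtain $i < k$ such that $B \nleq_T A_{\neq i}$ while $A_{=i}$ is 1-generic relative to $A_{\neq i}$.

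Next, I would copy the construction of $\mathcal{F}$ and $\psi$ from Theorem \ref{nu1r} unchanged: given a cofinite oracle $X$ for $A_{=i}$, define $\mathcal{F}(X)$ to be the cofinite oracle for $A$ that agrees with $\sigma$ on $|\sigma|$, halts immediately on locations where $A$ is recoverable from $A_{\neq i}$ or where $\sigma$ has already committed, and delegates the $i$-th column to $X$; choose the threshold $m_0$ as before, using $1/k < \epsilon$. Then $\psi^{X \oplus A_{\neq i}}(n)$ searches for a finite partial oracle $Y$ extending $\sigma$, contained in $\mathcal{F}(X)$, on which $\phi^Y(n)$ converges, and outputs that value. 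The totality and correctness verifications carry over word for word from Theorem \ref{nu1r}: correctness uses that any such $Y$ can be extended to a condition $q \leq p$ (by ``halting late'' on queried locations), which forces $\phi^G(n) = B(n)$; totality uses that $\mathcal{F}(X)$ itself can be completed to a generic generic oracle extending $p$. Hence $B \leq_{\cf} A_{=i}$ uniformly relative to $A_{\neq i}$.

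This contradicts the relativization of Proposition \ref{1g} to $A_{\neq i}$, since $A_{=i}$ is 1-generic (hence weakly 1-generic) relative to $A_{\neq i}$ and $B$ is noncomputable from $A_{\neq i}$. I do not expect a genuine technical obstacle: every step of the Theorem \ref{nu1r} proof that quotes randomness can be swapped for the analogous genericity statement, and the forcing/reduction machinery is agnostic to which notion is in play. The only point deserving a brief sanity check is that Lemma \ref{HJKSlem2} really supplies the joint conclusion ``$B \nleq_T A_{\neq i}$ and $A_{=i}$ is 1-generic over $A_{\neq i}$'' for the \emph{same} $i$, which is precisely what is stated, so the template closes cleanly.
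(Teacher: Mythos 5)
Your proposal is correct and matches the paper's proof exactly: the paper itself gives the same one-line sketch of substituting Lemma \ref{HJKSlem2} for Lemma \ref{HJKSlem} and Proposition \ref{1g} for Proposition \ref{1r} in the argument of Theorem \ref{nu1r}. You also correctly note the minor point that 1-genericity of $A_{=i}$ over $A_{\neq i}$ suffices because Proposition \ref{1g} only requires weak 1-genericity.
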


\begin{proof}[Proof (sketch)] The proof is identical to the proof of
  Theorem \ref{nu1r}, using Lemma \ref{HJKSlem2} in place of Lemma
  \ref{HJKSlem}, and using Proposition \ref{1g} in place of
  Proposition \ref{1r} \end{proof}

\end{document}